\newcommand{\G}{\Gamma}
\newcommand{\vG}{\vec{\Gamma}}
\newcommand{\Aut}{\mathrm{Aut}}
\newcommand{\ZZ}{\mathbb{Z}}
\newcommand{\PSL}{\mathrm{PSL}}
\newtheorem{theorem}{Theorem}[section]
\newtheorem{proposition}[theorem]{Proposition}
\newtheorem{lema}[theorem]{Lemma}
\newtheorem{corollary}[theorem]{Corollary}
\newtheorem{question}[theorem]{Question}
\theoremstyle{definition}
\newtheorem{agreement}[theorem]{Agreement}
\begin{document}


\begin{center}
{\bf\large
On tetravalent half-arc-transitive graphs of girth $5$}
\end{center}


\begin{center}
Iva Anton\v ci\v c{\small $^{a,*}$},  
Primo\v z \v Sparl{\small $^{b,c,d,}$}\footnotemark
\\

\medskip
{\it {\small
$^a$University of Primorska, The Faculty of Mathematics, Natural Sciences and Information Technologies, Koper, Slovenia\\
$^b$University of Ljubljana, Faculty of Education, Ljubljana, Slovenia\\
$^c$University of Primorska, Institute Andrej Maru\v si\v c, Koper, Slovenia\\
$^d$Institute of Mathematics, Physics and Mechanics, Ljubljana, Slovenia
}}
\end{center}

\footnotetext{
This work is supported in part by the Slovenian Research Agency
(research program P1-0285
and research projects {J1-1694, J1-1695, J1-2451 and J1-3001).}

~*Corresponding author e-mail:~antoncic.iva@gmail.com}


\begin{abstract}
A subgroup of the automorphism group of a graph $\G$ is said to be {\em half-arc-transitive} on $\G$ if its action on $\G$ is transitive on the vertex set of $\G$ and on the edge set of $\G$ but not on the arc set of $\G$. Tetravalent graphs of girths $3$ and $4$ admitting a half-arc-transitive group of automorphisms have previously been characterized. In this paper we study the examples of girth $5$. We show that, with two exceptions, all such graphs only have directed $5$-cycles with respect to the corresponding induced orientation of the edges. Moreover, we analyze the examples with directed $5$-cycles, study some of their graph theoretic properties and prove that the $5$-cycles of such graphs are always consistent cycles for the given half-arc-transitive group. We also provide infinite families of examples, classify the tetravalent graphs of girth $5$ admitting a half-arc-transitive group of automorphisms relative to which they are tightly-attached and classify the tetravalent half-arc-transitive weak metacirculants of girth $5$.
\end{abstract}


\begin{quotation}
\noindent {\em Keywords: tetravalent graph, half-arc-transitive, girth $5$, consistent cycle}\\
Math. Subj. Class.: 05C25, 05C38, 20B25.
\end{quotation}

\begin{section}{Introduction}

Throughout this paper graphs are assumed to be finite, simple, connected and undirected (but with an implicit orientation of the edges when appropriate). Most of the relevant group and graph theoretic concepts are reviewed in Section~\ref{sec:prelim}, but for the ones that are not see~\cite{DF04book, GR01book}. 

The topic of half-arc-transitive graphs and graphs admitting half-arc-transitive group actions has been extensively researched in the past decades with numerous publications and with researchers focusing on several different aspects of such graphs (see for instance~\cite{CuiZho21, PozPra21, RamSpa19} and the references therein). The vast majority of papers dealing with half-arc-transitive graphs focuses on examples with smallest possible valence, namely on the tetravalent ones, which is also what we do in this paper. One of the more fruitful approaches, in the context of tetravalent graphs in particular, has been studying the structure imposed on the graph by the half-arc-transitive action of the corresponding group. Namely, it is easy to see (but see for instance~\cite{MTAO}) that a half-arc-transitive subgroup of the automorphism group of a graph induces two paired orientations of its edges. One can then study the structural properties of the given tetravalent graph induced by the so called alternating cycles (the investigation of which was initiated in~\cite{MTAO} and recently taken further in~\cite{RamSpa19}), or by the so called reachability relations (introduced in a more general context in~\cite{MarPot02} and then further studied in~\cite{MalMarSeiSpaZgr08, MalPotSeiSpa15} also for infinite (di)graphs). 

Investigations with respect to alternating cycles have been particularly successful, resulting in partial classifications and characterizations of such graphs (see for instance~\cite{MTAO, MarPra99, PotSpa17, TA4E, Wil04}), and revealing that the tetravalent half-arc-transitive weak metacirculants (first studied in~\cite{classes} and later analyzed in a number of papers, see for instance~\cite{ClassIV, CuiZho21, OCM, C2C4}) form an important subfamily of these graphs. 

Another situation, where the local structure given by a half-arc-transitive group action proves to be very useful, is when one studies examples with small girth. It was observed back in 1997~\cite{MarXu97} that the tetravalent half-arc-transitive graphs of girth $3$ naturally correspond to the $1$-regular cubic graphs (cubic graphs whose automorphism group acts regularly on the set of their arcs). Obtaining a nice characterization of half-arc-transitive tetravalent graphs of girth $4$ turned out to be more challenging~\cite{HTG4}. The weak metacirculants among them were later classified in~\cite{ClassIV}. It should be mentioned that tetravalent edge-transitive graphs of girths $3$ and $4$ were thoroughly investigated also in~\cite{PotWil07}.
\medskip

In this paper we focus on the examples of girth $5$. More precisely, we study tetravalent graphs of girth $5$ admitting a half-arc-transitive group action. We study the possible types of $5$-cycles with respect to the induced orientation of the edges that can arise (see Section~\ref{sec:types} for the definition) and provide a complete classification of the examples with undirected $5$-cycles (see Theorem~\ref{neusmerjeni}). Quite remarkably, it turns out that there are just two such graphs and that one of them is in fact the smallest half-arc-transitive graph (the well-known Doyle-Holt graph) while the other is a certain Rose window graph admitting two different half-arc-transitive subgroups of automorphisms giving rise to $5$-cycles of two different types. 

All other tetravalent graphs of girth $5$ admitting half-arc-transitive group actions have directed $5$-cycles in the corresponding orientations of the edges and here the situation is much more complex and diverse. For these graphs we first provide a classification of the tightly-attached examples (see Theorem~\ref{TA}), where it is interesting to note that all such examples are in fact half-arc-transitive (that is, their full automorphism group is half-arc-transitive). The non-tightly-attached examples are then studied in Section~\ref{sec:directed}. We prove various properties of such graphs, among them the fact that in this case the $5$-cycles are all consistent and that each edge lies on at most two $5$-cycles, to name just two. We provide infinitely many examples having precisely one $5$-cycle through any given edge and also infinitely many examples having two $5$-cycles through any given edge (see Theorem~\ref{the:oneortwo}). We also show that for each of the two possibilities half-arc-transitive examples exist. Interestingly, the smallest half-arc-transitive one with two $5$-cycles through each edge is quite large with its order being $6480$. We also analyze the examples that can be found in the census of all tetravalent graphs up to order $1000$ admitting a half-arc-transitive action~\cite{censusHAT} (see also~\cite{PotWil}) to compare some other properties of such graphs. It turns out that there is a great variety in their properties (see the tables in Section~\ref{sec:directed}) which suggests that a complete classification of all tetravalent graphs of girth $5$ admitting a half-arc-transitive action is probably not feasible. We conclude the paper by an application of our results yielding a complete classification of tetravalent half-arc-transitive weak metacirculants of girth $5$ (see Theorem~\ref{the:meta}).

\end{section}

\begin{section}{Preliminaries}
\label{sec:prelim}

For a graph $\G$ we let $V(\Gamma)$, $E(\Gamma)$ and $A(\Gamma)$ be its vertex set, edge set and arc set, respectively, where an {\em arc} is an ordered pair of adjacent vertices. A subgroup $G \leq \Aut(\G)$ is said to be {\em vertex-transitive}, {\em edge-transitive} and {\em arc-transitive} on $\G$ if the corresponding natural action of $G$ on $V(\G)$, $E(\G)$ and $A(\G)$, respectively, is transitive. If $G$ is vertex- and edge- but not arc-transitive on $\G$, it is said to be {\em half-arc-transitive} on $\G$. In this case $\G$ is said to be {\em $G$-half-arc-transitive}. In the case that $G = \Aut(\G)$ we simply say that $\G$ is {\em half-arc-transitive}. Observe that if $\G$ admits a half-arc-transitive subgroup of automorphisms then $\G$ is either half-arc-transitive or is arc-transitive. We will often be using the well-known fact (see~\cite[Proposition 2.1]{MTAO}) that if $G \leq \Aut(\G)$ is half-arc-transitive on $\G$ then no element of $G$ can interchange a pair of adjacent vertices of $\G$.

As is well known (but see for instance~\cite{MTAO}) a half-arc-transitive action of $G \leq \Aut(\G)$ induces two paired orientations of edges of $\G$. If $\vec{\G}$ is one of the two corresponding oriented graphs with underlying graph $\G$ and $(u,v)$ is an arc of $\vec{\G}$, we say that $u$ is the {\em tail} and $v$ is the {\em head} of the arc $(u,v)$ and indicate this by writing $u \to v$. In the case that $\G$ is tetravalent (which will be the case throughout this paper) each vertex is the tail of two and the head of two arcs of $\vec{\G}$, thereby giving rise to {\em $G$-alternating cycles} which are cycles of $\G$ for which every two consecutive edges have opposite orientations in $\vec{\G}$. It was proved in~\cite{MTAO} that all $G$-alternating cycles of a graph have the same even length, half of which is called the {\em $G$-radius} of $\G$, and that every pair of nondisjoint $G$-alternating cycles meets in the same number of vertices (called the {\em $G$-attachment number} of $\G$). If the $G$-attachment number coincides with the $G$-radius, the graph is said to be {\em $G$-tightly-attached} (in all of the above terminology we omit the prefix $\Aut(\G)$ when $G = \Aut(\G)$). 

The tightly-attached examples were classified in~\cite{MTAO, TA4E} (but see also~\cite{MarPra99, Wil04}) and in~\cite{classes} it was shown that they represent Class~I of the four (not disjoint) classes of a larger family of {\em weak metacirculants}, which are graphs admitting a transitive metacyclic group. In the case that the quotient graph with respect to the orbits of the corresponding normal cyclic subgroup of the metacyclic group is of valence $4$ the graph is said to be of Class~IV. The results of~\cite{C2C4} imply that each tetravalent half-arc-transitive weak metacirculant belongs to at least one of Classes~I and~IV and the tetravalent half-arc-transitive weak metacirculants of Class~IV and girth $4$ were classified in~\cite{ClassIV}.
\medskip

When studying the structure of (tetravalent) graphs admitting a half-arc-transitive group of automorphisms the concept of reachability can also be useful. Here we only describe some of the corresponding notions first introduced in~\cite{MarPot02} and later further studied in~\cite{MalMarSeiSpaZgr08, MalPotSeiSpa15} but see~\cite{MarPot02, MalMarSeiSpaZgr08} for details. Let $\vG$ be an oriented graph (in our case we will always take one of the two $G$-induced orientations of the tetravalent graph in question where $G$ will be a corresponding half-arc-transitive group of automorphisms) and let $W = (v_0, v_1, \ldots , v_m)$ be a walk in the underlying graph of $\vG$. The {\em weight} $\zeta(W)$ of $W$ (called the sum of $W$ in~\cite{MarPot02}) is defined as the number of $i$, $0 \leq i < m$, such that $(v_i,v_{i+1}) \in A(\vG)$, minus the number of $i$, $0 \leq i < m$, such that $(v_{i+1},v_i) \in A(\vG)$. We then say that the vertices $u$ and $v$ of $\vG$ are {\em $R^+$-related} if there exists a walk $W = (v_0,v_1,\ldots , v_n)$ in $\vG$ from $u$ to $v$ such that $\zeta(W) = 0$ and for each $i$ with $0 \leq i \leq n$ the weight of the subwalk $(v_0,v_1, \ldots , v_i)$ of $W$ is nonnegative. The number of equivalence classes of $R^+$ is called the {\em alter-perimeter} of $\vG$ and $\vG$ is said to be {\em alter-complete} if its alter-perimeter is $1$ (that is, if all pairs of vertices are $\mathrm{R}^+$-related). 
\medskip

When studying graphs possessing a large degree of symmetry the notion of consistent cycles turns out to be useful. Here we only review the notions and results needed in this paper, but see~\cite{CC} where consistent cycles in half-arc-transitive graphs were thoroughly investigated. Let $G \leq \Aut(\G)$ be a vertex-transitive subgroup of automorphisms of a graph $\G$. A cycle $C$ of $\G$ is said to be {\em $G$-consistent} if there exists an element $g \in G$ (which is said to be a {\em shunt} of $C$) acting as a $1$-step rotation of $C$. It is easy to see (but see~\cite{CC}) that $G$ has a natural action on the set of all $G$-consistent cycles of $\G$ and so one can speak of $G$-orbits of $G$-consistent cycles. It follows from~\cite[Proposition 5.3]{CC} that if $G$ is a half-arc-transitive subgroup of automorphisms of a tetravalent graph then there are precisely two $G$-orbits of $G$-consistent cycles of $\G$.
\medskip

We conclude this section by reviewing the definition of an important family of tetravalent graphs, known as the Rose window graphs~\cite{Wil08}, containing four infinite subfamilies of vertex- and edge-transitive (in fact arc-transitive) examples (see~\cite{KovKutMar10}). For an integer $n \geq 3$ and integers $a, r$, where $1 \leq r < n/2$ and $1 \leq a \leq n-1$, the {\em Rose window} graph $\mathrm{R}_n(a, r)$ has vertex set $V = \{x_i \, | i\in \mathbb{Z}_{n}\}\cup \{y_i\, | i\in \mathbb{Z}_{n}\}$ and edge set $E=\{ \{x_i,\, x_{i+1}\} | i\in \mathbb{Z}_{n}\}\cup \{ \{y_i,\, y_{i+r}\} | i\in \mathbb{Z}_{n}\}\cup \{ \{x_i,\, y_{i} \} | i\in \mathbb{Z}_{n}\}\cup \{ \{x_i,\, y_{i-a}\} | i\in \mathbb{Z}_{n}\}$, where $\ZZ_n$ is the residue class ring modulo $n$ and where indices of vertices are computed modulo $n$.

\end{section}


\begin{section}{Types of $5$-cycles}
\label{sec:types}

Throughout this section let $\Gamma$ be a tetravalent graph of girth $5$ admitting a half-arc-transitive subgroup $G\leq\text{Aut}(\Gamma )$ and let $\vec{\G}$ be one of the two $G$-induced orientations of the edges of $\Gamma$. 
Let $C$ be a $5$-cycle of $\Gamma$. We define its {\em type} with respect to $\vG$ depending on the corresponding orientations of its edges in $\vec{\Gamma}$ as follows. Suppose first that at least one vertex $v$ of $C$ is either the head or the tail of the two arcs of $\vec{\Gamma}$ corresponding to the two edges of $C$ incident to $v$. In this case, letting $s$ be the largest integer such that $C$ contains a directed path of length $s$ of $\vec{\Gamma}$ (note that $2 \leq s \leq 4$), we say that $C$ is of {\em type $s$} with respect to $\vec{\Gamma}$ and that it is an {\em undirected} $5$-cycle with respect to $\vec{\Gamma}$. The only other possibility is that each vertex $v$ of $C$ is the head of one and the tail of the other of the two arcs of $\vG$ corresponding to the two edges of $C$ incident to $v$. Such $5$-cycles are said to be of {\em type $5$} and are said to be {\em directed} with respect to $\vG$. The four potential types of $5$-cycles are presented in Figure~\ref{5cycles}. 
\begin{figure}[ht]
\begin{center}
\includegraphics[height=3 cm]{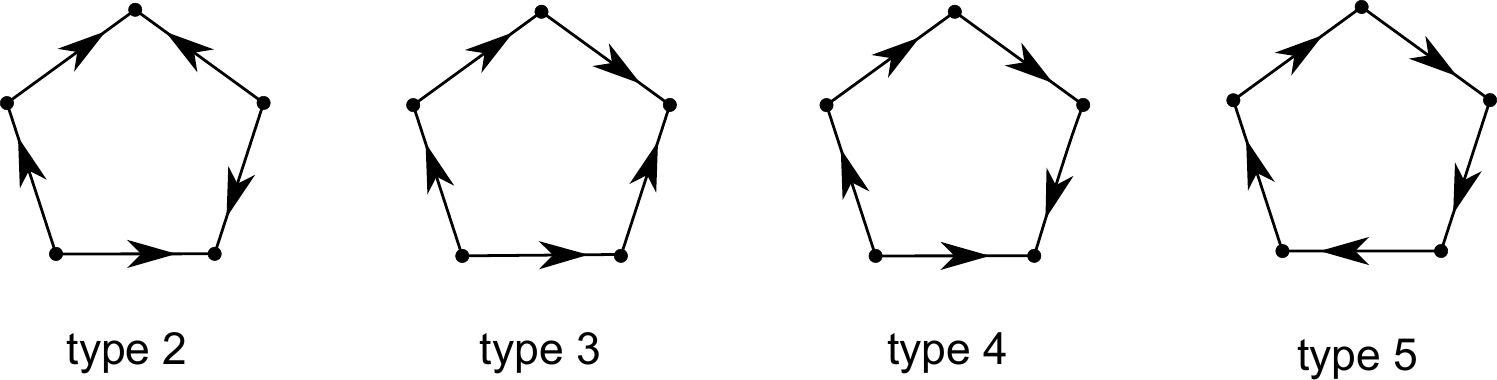}
\caption{Possible types of $5$-cycles.}
\label{5cycles}
\end{center} 
\end{figure}

Observe that taking the other of the two $G$-induced orientations of the edges of $\G$ does not change the type of a $5$-cycle. This is why henceforth, whenever a half-arc-transitive group $G \leq \Aut(\G)$ is clear from the context, we simply speak of directed and undirected $5$-cycles of $\G$ and of $5$-cycles of $\G$ of types $2$, $3$, $4$ and $5$. Observe also that for each $s$ with $2 \leq s \leq 4$ each $5$-cycle of type $s$ contains precisely one directed path of length $s$ of $\vG$. This $s$-path will be called {\em the $s$-arc} of this $5$-cycle. 
\medskip

For ease of reference we first record some rather obvious facts about $5$-cycles in tetravalent graphs admitting a half-arc-transitive group of automorphisms. 

\begin{lema}\label{basicProperties}
Let $\Gamma$ be a tetravalent graph of girth $5$ admitting a half-arc-transitive subgroup $G\leq\text{Aut}(\Gamma )$ and let $\vec{\Gamma}$ be one of the two $G$-induced orientations of $\Gamma$. Then the following all hold. 
\begin{enumerate}[label=(\roman*)]
\item \label{i} The elements of $G$ preserve types of $5$-cycles of $\Gamma$.
\item \label{ii} A $3$-arc of $\Gamma$ lies on at most one $5$-cycle of $\Gamma$.
\item \label{iii} The closed neighborhood of any $5$-cycle consists of $15$ vertices.
\end{enumerate}
\end{lema}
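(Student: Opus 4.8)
The plan is to obtain all three items directly from two structural facts, with essentially no computation: first, that every element of $G$ is an automorphism of the oriented graph $\vG$; and second, that $\G$ has girth $5$, so it contains neither triangles nor $4$-cycles. For part~\ref{i} I would begin by noting that $\vG$ is obtained by declaring one of the two $G$-orbits on $A(\G)$ to be its arc set. Since a group fixes each of its own orbits setwise, every $g\in G$ sends arcs of $\vG$ to arcs of $\vG$, i.e. $g$ is an automorphism of $\vG$ as an oriented graph. Hence if $C$ is a $5$-cycle, then in $g(C)$ the pattern of incoming and outgoing arcs at each vertex is exactly the image of the pattern at the corresponding vertex of $C$. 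As the type is defined purely through this in/out pattern (equivalently, through the length of the longest directed subpath of $C$, or the absence of any vertex that is head or tail of both its incident arcs in the directed case), $g(C)$ and $C$ share the same type.

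For part~\ref{ii}, I would write the $3$-arc as $u_0,u_1,u_2,u_3$, noting that in girth $5$ these four vertices are distinct. Any $5$-cycle containing this path arises by adjoining a single further vertex $w$ adjacent to both $u_0$ and $u_3$, so such $5$-cycles correspond bijectively to common neighbours $w$ of $u_0$ and $u_3$ with $w\notin\{u_1,u_2\}$. A common neighbour equal to $u_1$ or $u_2$ would produce the triangle $u_1u_2u_3$ or $u_0u_1u_2$, and two distinct admissible common neighbours $w\neq w'$ would produce the $4$-cycle on $u_0,w,u_3,w'$; both are impossible in girth $5$. Thus at most one such $w$ exists, and the $3$-arc lies on at most one $5$-cycle.

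For part~\ref{iii}, I would count. Let $C$ be a $5$-cycle; its closed neighbourhood is $V(C)$ together with the neighbours of its vertices lying off $C$. Each vertex of $C$ has valence $4$ and exactly two of its neighbours are its two neighbours on $C$, so it contributes exactly two off-cycle neighbours, for $10$ in total counted with multiplicity. It remains to see these $10$ vertices are distinct. If some $w\notin V(C)$ were adjacent to two distinct vertices $u,u'$ of $C$, then, as the distance between $u$ and $u'$ along $C$ is $1$ or $2$, we would obtain a triangle (distance $1$) or a $4$-cycle (distance $2$), contradicting girth $5$. Hence the off-cycle neighbours are pairwise distinct and the closed neighbourhood has $5+10=15$ vertices.

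I do not anticipate a genuine obstacle in any part. The only recurring subtlety is to check that the vertex completing a cycle in~\ref{ii}, or the shared neighbour in~\ref{iii}, does not silently coincide with a vertex already accounted for; in every such case the coincidence would force a $3$- or $4$-cycle and is therefore excluded by the girth hypothesis. Part~\ref{i} stands apart in that it is a statement about the $G$-orbit structure on arcs and uses no girth information whatsoever.
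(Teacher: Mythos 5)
Your proof is correct and takes essentially the same route as the paper: the paper disposes of part~(i) by noting that the orientation is $G$-induced (so $G$ acts by automorphisms of $\vec{\Gamma}$) and of parts~(ii) and~(iii) by invoking the girth-$5$ hypothesis, exactly the two structural facts you isolate. You have simply written out in full the routine verifications that the paper leaves to the reader.
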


\begin{proof}
The first claim follows from the assumption that the orientation $\vec{\Gamma}$ is $G$-induced, while the other two follow from the assumption that $\G$ has no cycles of length less than $5$. 
\end{proof}

Lemma~\ref{basicProperties}\ref{iii} implies that if $C = (v_0,v_1,v_2,v_3,v_4)$ is a $5$-cycle of $\G$ then the closed neighborhood $N[C]$ of $C$ consists of $15$ vertices. For each $i \in \ZZ_5$ we can thus denote the two neighbors of $v_i$ outside $C$ by $u_i$ and $w_i$ (see the left-hand side of Figure~\ref{fig:okolica}). In fact, it is easy to see that we can assume these labels have been chosen in such a way that for each $i \in \ZZ_5$ either $(v_{i-1},v_i,u_i)$ or its reverse is a directed $2$-path of $\vG$ and at the same time either $(w_i,v_i,v_{i+1})$ or its reverse is a directed $2$-path of $\vG$, where all the indices are computed modulo $5$ (see the right-hand side of Figure~\ref{fig:okolica} for an example with a $5$-cycle of type~3). We record this agreement for ease of future reference.
\begin{figure}[ht]
\begin{center}
\includegraphics[height=4 cm]{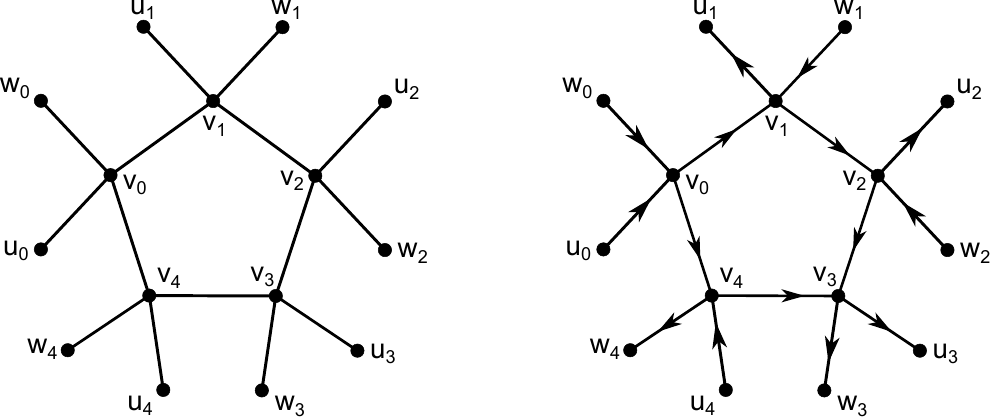}
\caption{The closed neighborhood of a $5$-cycle and orientations of the edges $u_iv_i$ and $v_iw_i$.}
\label{fig:okolica}
\end{center} 
\end{figure}

\begin{agreement}
\label{okolicaC}
If $\G$ is a tetravalent graph of girth $5$ admitting a half-arc-transitive subgroup $G \leq \Aut(\G)$, $\vG$ is one of the two $G$-induced orientations of the edges of $\G$ and $C = (v_0,v_1,v_2,v_3,v_4)$ is a $5$-cycle of $\G$, we denote the vertices of $N[C]$ in such a way that 
$$
	N[C] = \{v_i \colon i \in \ZZ_5\} \cup \{u_i \colon i \in \ZZ_5\} \cup \{w_i \colon i \in \ZZ_5\}, 
$$
and that for each $i \in \ZZ_5$ one of $(v_{i-1},v_i,u_i)$ or its reverse and one of $(w_i,v_i,v_{i+1})$ or its reverse is a directed $2$-path of $\vG$.
\end{agreement}

We conclude this section by another observation that will be useful in the subsequent sections.

\begin{lema}\label{stabilizerSize}
Let $\Gamma$ be a tetravalent graph of girth $5$ admitting a half-arc-transitive subgroup $G\leq\text{Aut}(\Gamma )$ and let $\vec{\Gamma}$ be one of the two $G$-induced orientations of $\Gamma$. Then the natural action of $G$ on the set of all $2$-arcs of $\vec{\Gamma}$ is transitive if and only if $|G_v| > 2$ for each $v\in V(\Gamma )$. Moreover, if $|G_v|=2$ for each $v\in V(\Gamma )$ then $G$ has precisely two orbits on the set of all $2$-arcs of $\vec{\Gamma}$, for each $v\in V(\Gamma )$ there are precisely two $2$-arcs with middle vertex $v$ from each of these two $G$-orbits, and the two $2$-arcs with middle vertex $v$ from the same $G$-orbit have no edges in common.
\end{lema}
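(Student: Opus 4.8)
The plan is to reduce every assertion to the local action of $G_v$ on the neighbourhood $N(v)$ and then to isolate the one genuinely delicate point. First I would observe that, since $\vG$ is $G$-induced and $G$ is edge-transitive, $G$ is transitive on the arc set of $\vG$: an element carrying an edge $\{a,b\}$ to an edge $\{c,d\}$ preserves $\vG$ and hence carries the arc lying on $\{a,b\}$ to the unique arc of $\vG$ lying on $\{c,d\}$ (here one uses that no element of $G$ reverses an edge, see~\cite{MTAO}). Fixing $v$ and writing $a_1,a_2$ for its in-neighbours and $c_1,c_2$ for its out-neighbours as in Agreement~\ref{okolicaC}, arc-transitivity forces $G_v$ to be transitive on $\{a_1,a_2\}$ and on $\{c_1,c_2\}$, while preservation of $\vG$ prevents it from sending an in-neighbour to an out-neighbour. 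Thus the permutation group $H$ induced by $G_v$ on $N(v)$ is a subgroup of $\ZZ_2\times\ZZ_2$ surjecting onto both direct factors, so $H$ is either all of $\ZZ_2\times\ZZ_2$ or its diagonal subgroup of order $2$.

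Next I would translate the orbit count into a statement about $H$. By vertex-transitivity the $G$-orbits on $2$-arcs of $\vG$ are in bijection with the $G_v$-orbits on the four $2$-arcs having middle vertex $v$, and these four are indexed by $\{a_1,a_2\}\times\{c_1,c_2\}$ with $G_v$ acting through $H$. If $H=\ZZ_2\times\ZZ_2$ the action is transitive, giving a single $G$-orbit; if $H$ is the diagonal subgroup the orbits are $\{(a_1,v,c_1),(a_2,v,c_2)\}$ and $\{(a_1,v,c_2),(a_2,v,c_1)\}$, giving exactly two $G$-orbits, each meeting the $2$-arcs through $v$ in precisely two members which by inspection share no edge. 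Observe that once $|G_v|=2$ is assumed, $H$ is a nontrivial quotient of order dividing $2$ that surjects onto both factors, hence is forced to be the diagonal subgroup; so the whole \emph{moreover} part falls out of the description above with no further argument.

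To prove the equivalence I would couple $H$ with $|G_v|$. Writing $K$ for the kernel of the action of $G_v$ on $N(v)$ (equivalently the pointwise stabilizer of $N[v]$), we have $|G_v|=|K|\cdot|H|$. The forward implication is immediate: transitivity on $2$-arcs gives $H=\ZZ_2\times\ZZ_2$, whence $|G_v|\ge 4>2$. For the converse it is cleanest to pass to an arc $a\to b$, so that $|G_{a\to b}|=|G_v|/2$ and $G_{a\to b}$ fixes $a,b$ and therefore also the second in-neighbour of $b$; transitivity on $2$-arcs is then equivalent to $G_{a\to b}$ acting nontrivially on the two out-neighbours of $b$. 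If $G$ were \emph{not} transitive, $G_{a\to b}$ would fix all four neighbours of $b$, i.e.\ $G_{a\to b}\le K_b$, forcing $|G_v|/2\le|K_b|$. Hence, to deduce that $|G_v|>2$ implies transitivity, it suffices to show $K_b=1$ for every vertex $b$.

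The main obstacle is exactly this faithfulness statement: the pointwise stabilizer of a closed neighbourhood is trivial, and this is where the hypothesis of girth $5$ must be used essentially (for general tetravalent graphs with half-arc-transitive group actions the vertex stabilizer need not act faithfully on the neighbourhood). I would attempt a minimal-distance argument: assuming $1\ne g\in K_v$, pick a vertex $w$ moved by $g$ at minimal distance from $v$, and use minimality together with the fact that $g$ respects $\vG$ and fixes $N[v]$ to reduce to the case where $g$ fixes a neighbour $c_1$ and both its in-neighbours while interchanging its two out-neighbours $d_1,d_2$. Girth $5$ then enters as follows: since $g$ fixes any common neighbour and maps $d_1$ to $d_2$, a second fixed vertex adjacent to $d_1$ would be a common neighbour of $d_1$ and $d_2$ besides $c_1$ and would close up a $4$-cycle; hence \emph{every} vertex moved by $g$ has at most one fixed neighbour. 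Dually, by Lemma~\ref{basicProperties}\ref{ii} a directed $3$-arc all of whose vertices are fixed lies on a unique $5$-cycle whose fifth vertex is then adjacent to two fixed vertices, so that vertex must itself be fixed; thus fixedness spreads along $5$-cycles. Combining these two spreading principles with connectivity and the fact that (by edge-transitivity and girth $5$) every edge lies on a $5$-cycle should force $\mathrm{Fix}(g)=V(\G)$, contradicting $g\ne 1$; one might alternatively phrase this spreading through the reachability framework recalled in Section~\ref{sec:prelim}. Carrying this contradiction out rigorously — in particular guaranteeing that the relevant $3$-arcs are directed and genuinely extend to $5$-cycles — is the delicate heart of the proof, while everything preceding it is bookkeeping.
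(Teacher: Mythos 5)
Your reduction to the local action of $G_v$ on $N(v)$, the forward implication, and the ``moreover'' part are all correct and match the paper. The gap is in the converse. You reduce ``$|G_v|>2$ implies $2$-arc-transitivity'' to the faithfulness claim $K_b=1$, and your proposed proof of faithfulness does not go through as sketched: Lemma~\ref{basicProperties}\ref{ii} only says a $3$-arc lies on \emph{at most} one $5$-cycle, so a fixed directed $3$-arc need not extend to a $5$-cycle at all, and the ``spreading'' step that is supposed to propagate fixedness from $N[v]$ to all of $\G$ therefore has nothing to propagate along in general; you acknowledge this yourself. Since faithfulness is the statement carrying all the weight in your architecture, the proof as written is incomplete.

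The more important point is that the detour through $K_b=1$ is unnecessary, and your own minimal-distance step already contains the finish. Once you have located the vertex $c_1$ at which $g$ fixes both in-neighbours and interchanges the two out-neighbours, the stabilizer $G_{c_1}$ induces on $N(c_1)$ an element outside the diagonal; together with the ``swap both pairs'' element coming from arc-transitivity of $G$ on $\vG$, the induced group is all of $\ZZ_2\times\ZZ_2$, so $G$ is transitive on the $2$-arcs with middle vertex $c_1$ and hence, by vertex-transitivity (conjugacy of stabilizers together with their local actions), on all $2$-arcs. So the contradiction hypothesis should be ``$|G_v|>2$ and $G$ is \emph{not} $2$-arc-transitive'' (whence the local action is diagonal at every vertex and some nontrivial $\alpha\in G_v$ fixes $N(v)$ pointwise), not ``$K_v\neq 1$''; with that restructuring your argument closes without any appeal to girth, $5$-cycles, or faithfulness. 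This is essentially the paper's proof: it takes a nontrivial element of $G_v$ fixing a neighbour, and if that element fixes all of $N(v)$ it walks out to the nearest moved vertex to find a vertex whose stabilizer induces the full local group, then transports this back by vertex-transitivity. Note in particular that the paper's argument never uses the girth $5$ hypothesis, whereas your route would make the lemma depend on it essentially.
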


\begin{proof}
This is a straightforward consequence of the fact that in $\vG$ each vertex $v$ has two out-neighbors and two in-neighbors and that $G$ acts transitively on the set of arcs of $\vG$. Namely, this implies that for each $v \in V(\G)$ there is an element of $G_v$ interchanging the two out-neighbors of $v$ and at the same time interchanging the two in-neighbors of $v$. Therefore, $G$ acts transitively on the set of $2$-arcs of $\vG$ if and only if there is an element of $G_v$ fixing the two out-neighbors of $v$ while interchanging the two in-neighbors of $v$. We will see that such an automorphism of $G_v$ exists if and only if $|G_v| > 2$. Obviously in the case of its existence, $|G_v|>2$ holds. Let now $|G_v| > 2$. Then there exists a nontrivial automorphism $\alpha \in G_v$ fixing at least one of the neighbors of $v$. Since the two in-neighbors are in the same $G_v$-orbit  and the two out-neighbors are in the same $G_v$-orbit, this means that $\alpha$ either fixes both in-neighbors, or both out-neighbors. If it also interchanges the out-neighbors in the first case or in-neighbors in the second case, the sufficiency is proven. If not, then since $\G$ is connected and $\alpha\ne id$, there is some vertex fixed by $\alpha$, were $\alpha$ does not fix all its neighbors, but does fix both in-neighbors or both out-neighbors. Since $G$ acts transitively on the vertex set of $\G$, all vertex stabilizers are isomorphic, meaning that this is also true for $G_v$, as required.

\end{proof}

\end{section}


\begin{section}{Tightly-attached $G$-half-arc-transitive graphs of girth $5$}
\label{sec:TA}

Before analyzing the possible four types of $5$-cycles in all generality we determine the tetravalent graphs of girth $5$ admitting a half-arc-transitive group action relative to which they are tightly-attached.

Let $\Gamma$ be a connected tetravalent graph of girth $5$ admitting a half-arc-transitive subgroup $G\leq\text{Aut}(\Gamma )$ for which it is tightly-attached. By \cite[Proposition 2.4.]{MTAO} it has at least three $G$-alternating cycles (otherwise it is a Cayley graph of a cyclic group and as such has girth at most $4$). Furthermore, as $\Gamma$ is of girth $5$ its $G$-radius $r_G$ is at least $3$. We first show that $r_G$ must be odd. If it is even, then by~\cite[Theorem 1.2.]{TA4E} the graph $\G$ is isomorphic to some $\mathcal{X}_e(m,\, r;\, q,\, t)$ where $m$ is even (see \cite{TA4E} for the definition of $\mathcal{X}_e$), implying that it is bipartite. This contradicts the assumption that $\G$ is of girth $5$. It thus follows that $r_G$ is odd, as claimed. 

To be able to state our result we recall the definition of the $\mathcal{X}_o (m,\, r;\, q)$ graphs from \cite{MTAO, TA4E} (we mention that $r$ is the $G$-radius of the graph). For integers $m\geq 3$ and $r\geq 3$, where $r$ is odd, and for $q\in \mathbb{Z}_r$ with $q^m=\pm 1\>$ $(\text{in}\>  \mathbb{Z}_r)$ let $\mathcal{X}_o(m,\, r;\, q)$ be the graph with vertex set  $V=\{ u_i^j;\, i\in \mathbb{Z}_m,\, j\in\mathbb{Z}_r\}$ and the edges defined by the adjacencies 
$$
	u_i^j\sim u_{i+1}^{j\pm q^i}\, ;\ i\in\mathbb{Z}_m ,\ j\in \mathbb{Z}_r,
$$
where the subscripts are computed modulo $m$ and the superscripts modulo $r$.
\medskip

By \cite[Proposition 3.3.]{MTAO} each connected tetravalent graph $\Gamma$ admitting a half-arc-transitive subgroup $G\leq\text{Aut}(\Gamma )$ such that $\Gamma$ is $G$-tightly-attached with an odd $G$-radius $r$ is isomorphic to $\mathcal{X}_o(m,\, r;\, q)$ for some $m\geq 3$ and $q\in \mathbb{Z}_r$ such that $q^m=\pm 1$. In fact, since $m$ is odd and \cite[Proposition 4.1.]{MTAO} ensures that $\mathcal{X}_o (m,\, r;\, q)\cong \mathcal{X}_o (m,\, r;\, -q)$, we can actually assume that $q^m=1$. Suppose now that $\Gamma$ is of girth $5$. Then $q\neq \pm 1$ (otherwise $\Gamma$ has $4$-cycles). By \cite[Proposition 3.1.]{MTAO} one of the two $G$-induced orientations of the edges of $\G$ is such that for each $i\in\mathbb{Z}_m$ and each $j\in\mathbb{Z}_r$ the edges $u^j_iu^{j\pm q^i}_{i+1}$ are both oriented from $u^j_i$ to $u^{j+ q^i}_{i+1}$ and $u^{j- q^i}_{i+1}$. Therefore, only $5$-cycles of types $4$ and $5$ are possible. Moreover, in the case of type $4$ we must have $m=3$ with $3 \pm q \pm q^2 = 0 $, while in the case of $5$-cycles of type 5 we have that $m=5$ and $1 \pm q \pm q^2 \pm q^3 \pm q^4 = 0$. 

First we analyze the possibility of having $5$-cycles of type $4$. In this case $\Gamma\cong\mathcal{X}_o(3,\, r;\, q)$ with $q^3= 1$ and $3\pm q\pm q^2=0$. Using division-free row reductions of the corresponding Sylvester matrices (see \cite[Section 4]{TA4E}) we see that the only possibility (so as to avoid $q=\pm 1$) is that $r = 9$ and $q \in \{4,7\}$ resulting in the well-known Doyle-Holt graph $\mathcal{X}_o (3,\, 9;\, 4)\cong \mathcal{X}_o (3,\, 9;\, 7)$. 

In the case of directed $5$-cycles we have $\Gamma\cong\mathcal{X}_o(5,\, r;\, q)$ with $q^5=1$ and $1 \pm q \pm q^2 \pm q^3 \pm q^4 = 0$. As above, division-free row reductions of the corresponding Sylvester matrices show that the only possibility to avoid $q=\pm 1$ is that $1+q+q^2+q^3+q^4=0$. Note that $q\neq \pm 1$ together with $q^5 = 1$ forces $q^2\neq \pm 1$. Then \cite[Theorem 3.4.]{MTAO} implies that $\Gamma$ is in fact half-arc-transitive. We have thus established the following result.

\begin{theorem}
\label{TA}
A tetravalent graph $\Gamma$ of girth $5$ is a tightly-attached $G$-half-arc-transitive graph for some subgroup $G\leq\text{Aut}(\Gamma )$ if and only if one of the following holds:
\begin{itemize}
\item $\Gamma\cong \mathcal{X}_o(3,\, 9;\, 4)$, the well-known Doyle-Holt graph;
\item $\Gamma\cong \mathcal{X}_o(5,\, r;\, q)$, where $q\ne \pm 1$ and $1+q+q^2+q^3+q^4=0$.
\end{itemize}
Moreover, all such graphs are half-arc-transitive. Furthermore, the only tetravalent graph of girth $5$ admitting a half-arc-transitive group of automorphisms relative to which it is tightly-attached and has undirected $5$-cycles is the Doyle-Holt graph.
\end{theorem}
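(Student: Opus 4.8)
The plan is to start from the existing classification of tightly-attached tetravalent $G$-half-arc-transitive graphs and to whittle it down using the girth-$5$ hypothesis. First I would invoke the classification from \cite{MTAO, TA4E}: a connected tetravalent $G$-tightly-attached graph either has even $G$-radius and is then some $\mathcal{X}_e$ (hence bipartite), or has odd $G$-radius $r$ and is some $\mathcal{X}_o(m,r;q)$. Since a graph of girth $5$ cannot be bipartite, the even case is excluded immediately, so $\Gamma \cong \mathcal{X}_o(m,r;q)$ with $r \geq 3$ odd; using $\mathcal{X}_o(m,r;q) \cong \mathcal{X}_o(m,r;-q)$ I would normalize to $q^m = 1$, and girth $5$ forces $q \neq \pm 1$ (otherwise $4$-cycles appear).

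The key structural step is to exploit the canonical $G$-induced orientation, in which every arc raises the layer index $i$ by one. Thus for any $5$-cycle the sequence of layer-changes lies in $\{+1,-1\}^5$ and must sum to $0$ modulo $m$, while the source and sink vertices of the cycle correspond exactly to the sign changes of this sequence (so the number of sign changes is even and the cycle type is governed by its maximal runs). Enumerating the cyclic sign patterns on five positions, a directed cycle (type $5$) forces all signs equal, giving net change $\pm 5 \equiv 0 \pmod m$ and hence $m = 5$; a type-$4$ pattern, with maximal runs of lengths $4$ and $1$, gives net change $\pm 3$ and hence $m = 3$; and the remaining undirected patterns (type $3$, with runs of lengths $3$ and $2$, and type $2$, with runs of lengths $2,1,1,1$) all yield net change $\pm 1$, which would force $m = 1$, a contradiction. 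This is precisely what rules out types $2$ and $3$ and reduces everything to $m \in \{3,5\}$.

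It then remains to solve, in each of the two surviving cases, the arithmetic condition that closes up the superscripts of a $5$-cycle: tracking the $\pm q^{i}$ increments around the cycle and reducing exponents using $q^m = 1$ yields $3 \pm q \pm q^2 = 0$ when $m = 3$, and $1 \pm q \pm q^2 \pm q^3 \pm q^4 = 0$ when $m = 5$, to be solved jointly with $q^m = 1$ over $\mathbb{Z}_r$. I would treat each sign choice as a pair of integer polynomials in $q$ and eliminate $q$ by division-free row reduction of the associated Sylvester matrices, as in \cite[Section 4]{TA4E}, discarding the solutions with $q = \pm 1$. This is the computational heart of the argument and the step I expect to be the main obstacle: one must cover all sign combinations and verify that the resulting $(r,q)$ genuinely give girth exactly $5$ rather than smaller. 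The expected outcome is that $m = 3$ leaves only $r = 9$ with $q \in \{4,7\}$ (the Doyle--Holt graph $\mathcal{X}_o(3,9;4) \cong \mathcal{X}_o(3,9;7)$), while $m = 5$ leaves only $1 + q + q^2 + q^3 + q^4 = 0$.

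Finally, I would establish half-arc-transitivity and assemble the classification. In the $m = 5$ case, $q \neq \pm 1$ together with $q^5 = 1$ forces $q^2 \neq \pm 1$, so \cite[Theorem 3.4]{MTAO} applies and $\Gamma$ is half-arc-transitive; the Doyle--Holt graph is classically known to be half-arc-transitive, settling the $m = 3$ case. For the converse direction one checks directly that these families are $G$-tightly-attached of girth $5$ by their construction, so the listed conditions are both necessary and sufficient. The last assertion is then immediate from the type analysis: the undirected $5$-cycles are exactly those of types $2$, $3$ and $4$, of which only type $4$ survives, and it occurs only for the Doyle--Holt graph, whereas all the $\mathcal{X}_o(5,r;q)$ examples have directed (type-$5$) cycles.
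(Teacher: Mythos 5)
Your proposal follows essentially the same route as the paper's proof: reduce to $\mathcal{X}_o(m,r;q)$ via the tightly-attached classification (excluding the even-radius case by bipartiteness), use the canonical orientation in which every arc raises the layer index to force $m\in\{3,5\}$ and rule out types $2$ and $3$, solve the resulting conditions $3\pm q\pm q^2=0$ and $1\pm q\pm q^2\pm q^3\pm q^4=0$ by the division-free Sylvester-matrix reductions of \cite[Section 4]{TA4E}, and obtain half-arc-transitivity from \cite[Theorem 3.4]{MTAO}. The only difference is cosmetic: you spell out the sign-pattern enumeration (net layer change $\pm 5$, $\pm 3$ or $\pm 1$) that the paper leaves implicit.
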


Given that there is only one tetravalent graph of girth $5$ admitting a half-arc-transitive group action relative to which it is tightly-attached and the $5$-cycles are undirected, the natural question is whether there are other such graphs if we omit the condition of the graph being tightly-attached. We answer this question in the next section.

 \end{section}


\begin{section}{$G$-half-arc-transitive graphs with undirected $5$-cycles}
\label{sec:undir}

Throughout this section let $\Gamma$ be a tetravalent $G$-half-arc-transitive graph of girth $5$ for some $G\leq\text{Aut}(\Gamma )$ such that in a chosen $G$-induced orientation $\vec{\Gamma}$ of the edges of $\Gamma$ there are undirected $5$-cycles, that is $5$-cycles of type $2$, $3$ or $4$.

We first show that $5$-cycles of type~2 in fact cannot occur. By way of contradiction suppose $C = (v_0,\, v_1,\, v_2,\, v_3,\, v_4)$ is a $5$-cycle of type $2$ of $\G$ where $(v_4,\, v_0 ,\, v_1)$ is the $2$-arc of $C$. Each element of $G$ that fixes the vertex $v_2$ and interchanges the vertices $v_1$ and $v_3$ interchanges the vertices $v_0$ and $v_4$, contradicting the fact that no element of $G$ can interchange a pair of adjacent vertices. Therefore the following holds.

\begin{proposition}\label{no2}
Let $\G$ be a tetravalent $G$-half-arc-transitive graph of girth $5$ for some $G \leq \Aut(\G)$. Then $\G$ has no $5$-cycles of type $2$.
\end{proposition}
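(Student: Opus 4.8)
The plan is to argue by contradiction, exploiting the rigidity of the orientation of a type-$2$ cycle together with the basic fact recalled in Section~\ref{sec:prelim} (see~\cite[Proposition~2.1]{MTAO}) that no element of a half-arc-transitive group can interchange a pair of adjacent vertices. So I would assume $\G$ has a $5$-cycle $C = (v_0, v_1, v_2, v_3, v_4)$ of type~$2$ and, relabelling if necessary, take its $2$-arc to be $(v_4, v_0, v_1)$, so that $v_4 \to v_0 \to v_1$ in $\vG$.

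The first step is to pin down the orientation of the remaining three edges. Since each vertex of $\vG$ is the tail of exactly two and the head of exactly two arcs, and since the longest directed path on $C$ has length exactly $2$, the orientations are forced: were $v_1 \to v_2$ we would extend the directed path past $v_1$, so $v_2 \to v_1$; symmetrically $v_4 \to v_3$; and $v_3 \to v_2$ would create the directed $3$-arc $v_4 \to v_3 \to v_2 \to v_1$, so instead $v_2 \to v_3$. Hence $v_2$ is a source of $C$ with out-neighbors exactly $v_1$ and $v_3$, while $v_1$ and $v_3$ are sinks. In particular, as each vertex has precisely two in-neighbors in $\vG$, the two in-neighbors of $v_1$ are exactly $\{v_0, v_2\}$ and those of $v_3$ are exactly $\{v_2, v_4\}$.

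Next I would produce the offending automorphism. Since $G$ is transitive on the arcs of $\vG$, the stabilizer $G_{v_2}$ acts transitively on the two out-neighbors $v_1, v_3$ of $v_2$, so there is an element $g \in G_{v_2}$ interchanging $v_1$ and $v_3$. As $g$ fixes $v_2$ and preserves the orientation $\vG$, it carries the set of in-neighbors of $v_1$ onto the set of in-neighbors of $v_3 = g(v_1)$; that is, $g(\{v_0, v_2\}) = \{v_2, v_4\}$, and since $g(v_2) = v_2$ this forces $g(v_0) = v_4$. Running the same argument with the roles of $v_1$ and $v_3$ exchanged yields $g(v_4) = v_0$, so $g$ interchanges $v_0$ and $v_4$.

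The contradiction is then immediate, since $v_0$ and $v_4$ are adjacent on $C$ (the edge underlying the arc $v_4 \to v_0$), while no element of the half-arc-transitive group $G$ may interchange a pair of adjacent vertices. I do not expect a genuine obstacle here: the only point demanding care is the first step, namely confirming that type~$2$ rigidly forces the source/sink orientation above, since the entire argument rests on $v_2$ being a source whose two out-neighbors are precisely the two sinks $v_1, v_3$ adjacent in turn to the endpoints $v_0, v_4$ of the $2$-arc.
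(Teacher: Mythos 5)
Your proof is correct and follows essentially the same route as the paper's: the paper's one-line argument is precisely that any element of $G_{v_2}$ interchanging $v_1$ and $v_3$ must interchange the adjacent vertices $v_0$ and $v_4$, a contradiction. You have merely filled in the (correct) details of why the orientation of $C$ is forced, why such an element of $G_{v_2}$ exists, and why it swaps $v_0$ and $v_4$.
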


Before analyzing the other two types of undirected $5$-cycles let us first prove a useful fact about the stabilizer of a vertex in the action of $G$.

\begin{proposition}\label{z2}
Let $\Gamma$ be a tetravalent graph of girth $5$ admitting a half-arc-transitive subgroup $G\leq\text{Aut}(\Gamma )$ such that a corresponding $G$-induced orientation $\vec{\Gamma}$ gives rise to undirected $5$-cycles. Then $G_v\cong \mathbb{Z}_2$ for all $v\in V(\Gamma)$.
\end{proposition}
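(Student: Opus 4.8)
The plan is to establish $G_v\cong\ZZ_2$ by proving both $|G_v|\ge 2$ and $|G_v|\le 2$. The lower bound is immediate: as recorded in the proof of Lemma~\ref{stabilizerSize}, for each $v$ there is a nontrivial element of $G_v$ interchanging the two out-neighbours and simultaneously the two in-neighbours of $v$, so $|G_v|\ge 2$. For the upper bound I argue by contradiction. Assuming $|G_v|>2$, Lemma~\ref{stabilizerSize} gives that $G$ is transitive on the set of all $2$-arcs of $\vG$. Since each $G_x$ preserves the in/out partition of the neighbours of $x$ and is transitive on the four directed $2$-paths through $x$, the image of $G_x$ in $\mathrm{Sym}(\mathrm{in})\times\mathrm{Sym}(\mathrm{out})=\ZZ_2\times\ZZ_2$ is the whole group; in particular, for every vertex $x$ there is an element of $G_x$ transposing the two in-neighbours of $x$ while fixing its two out-neighbours, and symmetrically. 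These ``partial swaps'' are the only tool I will use, and by Proposition~\ref{no2} the undirected $5$-cycle at hand has type $3$ or $4$.

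Consider first a $5$-cycle $C=(v_0,v_1,v_2,v_3,v_4)$ of type $3$, relabelled (using Agreement~\ref{okolicaC}) so that $v_0$ is its source and $v_3$ its sink, with long branch $v_0\to v_1\to v_2\to v_3$ and short branch $v_0\to v_4\to v_3$; let $u_4$ be the in-neighbour of $v_4$ other than $v_0$, and $w_2$ the in-neighbour of $v_2$ other than $v_1$. I would take $\gamma\in G_{v_4}$ swapping the in-neighbours $v_0\leftrightarrow u_4$ of $v_4$ and fixing its out-neighbours, so $\gamma$ fixes $v_3$ and $v_4$ and sends $v_0\mapsto u_4$. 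Because the sink $v_3$ has only the two in-neighbours $v_2,v_4$, comparing the two arcs into $v_3$ in $\gamma(C)$ forces $\gamma(v_2)=v_2$, whence $\gamma(v_1)$ is an in-neighbour of $v_2$, i.e. $\gamma(v_1)\in\{v_1,w_2\}$. The case $\gamma(v_1)=v_1$ makes $u_4$ a common in-neighbour of $v_1$ and $v_4$ alongside $v_0$, producing a $4$-cycle and contradicting girth $5$; hence $\gamma(v_1)=w_2$, and applying $\gamma$ to the arc $v_0\to v_1$ yields the arc $u_4\to w_2$.

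Next I would take $\epsilon\in G_{v_3}$ swapping the in-neighbours $v_2\leftrightarrow v_4$ of the sink and fixing its out-neighbours. Tracing $\epsilon(C)$ and using Lemma~\ref{basicProperties}\ref{ii} together with girth $5$ to discard the degenerate placements (each of which, exactly as above, would force a $4$-cycle), one is driven to $\epsilon(v_1)=u_4$ and then $\epsilon(v_0)=w_2$; applying $\epsilon$ to $v_0\to v_1$ now yields the arc $w_2\to u_4$. Since $\vG$ is an orientation, the two conclusions $u_4\to w_2$ and $w_2\to u_4$ are incompatible (and $u_4=w_2$ would produce a loop), so the assumption $|G_v|>2$ is untenable whenever $C$ has type $3$.

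The remaining case, and the step I expect to be the main obstacle, is type $4$. There the source and sink of $C$ are adjacent and no vertex sits on a short branch between them, so the clean ``two partial swaps pin the same edge'' mechanism is not directly available. The plan is to run the analogous analysis at the sink (whose in-neighbours are $v_3$ and the source $v_0$) and at the source, chasing the forced images of $v_1,v_2,v_3$ one arc at a time and invoking girth $5$ to eliminate the degenerate coincidences; the delicate part will be to arrange these forced images so that they again collide on a single edge (or manufacture a $4$-cycle), since the source/sink symmetry that made type $3$ transparent is weaker here. Once type $4$ is settled, both types contradict the presence of undirected $5$-cycles, giving $|G_v|\le 2$ and hence $G_v\cong\ZZ_2$.
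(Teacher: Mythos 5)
Your lower bound and your reduction to types $3$ and $4$ are fine, and your type~$3$ argument, though different from the paper's, does work: the paper instead takes a single $\alpha\in G_{v_0}$ at the source mapping the $2$-arc $(v_0,v_1,v_2)$ to $(v_0,v_4,v_3)$, notes $v_3^\alpha\neq v_2$ because no element of a half-arc-transitive group swaps adjacent vertices, and reads off the $4$-cycle $(v_1,v_3^\alpha,v_3,v_2)$ in two lines, whereas you need two partial swaps and a case analysis to force the incompatible arcs $u_4\to w_2$ and $w_2\to u_4$. Both are valid; yours is just longer.

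The genuine gap is the type~$4$ case, which you explicitly leave as a plan rather than a proof: you say the ``two partial swaps pin the same edge'' mechanism is not directly available and that the delicate part ``will be'' to make the forced images collide. Since a proof of the proposition must dispose of type~$4$ (Proposition~\ref{no3} alone does not reduce everything to type~$3$), this case cannot be deferred. The missing idea is to work at the source $v_0$ of the $4$-arc $(v_0,v_1,v_2,v_3,v_4)$ (so $v_0\to v_1$ and $v_0\to v_4$) and use $2$-arc-transitivity to produce \emph{two} elements $\alpha,\beta\in G_{v_0}$ sending the $2$-arc $(v_0,v_1,v_2)$ to $(v_0,v_4,u_4)$ and to $(v_0,v_4,w_4)$ respectively, where $u_4,w_4$ are the two out-neighbours of $v_4$. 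Both fix $v_0$ and swap $v_1$ with $v_4$, so each sends $v_3$ (an in-neighbour of $v_4$ other than $v_2$'s role notwithstanding) to an in-neighbour of $v_1$ different from $v_0$; as $v_1$ has only one such in-neighbour, $v_3^\alpha=v_3^\beta$. This common image is adjacent to $v_2^\alpha=u_4$ and to $v_2^\beta=w_4$, and $(w_4,v_3^\alpha,u_4,v_4)$ is then a $4$-cycle, contradicting girth $5$. Without this (or an equivalent completed argument), your proposal does not establish the proposition.
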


\begin{proof}
By Proposition \ref{no2} we only have to consider the cases where $\vec{\Gamma}$ has $5$-cycles of type $3$ or of type $4$. Let $C = (v_0,v_1,v_2,v_3,v_4)$ be an undirected $5$-cycle of $\vec{\Gamma}$ and denote the vertices of $N[C]$ as in Agreement~\ref{okolicaC}. By way of contradiction suppose that for some (and thus each) vertex $v$ of $\G$ we have that $|G_v| > 2$. We consider the two cases separately.

Suppose first that $C$ is of type $3$. With no loss of generality assume that $(v_0 ,\, v_1 ,\, v_2,\,v_3)$ is the $3$-arc of $C$. By Lemma \ref{stabilizerSize} there exists an automorphism $\alpha \in G_{v_0}$ which maps the $2$-arc $(v_0 ,\, v_1 ,\, v_2)$ to $(v_0 ,\, v_4 ,\, v_3)$. Clearly, $\alpha$ maps $v_4$ to $v_1$. Since it cannot interchange adjacent vertices $v_2$ and $v_3$, $v^\alpha_3\ne v_2$ must hold. But then $(v_1 ,\, v^\alpha_3,\, v_3,\, v_2)$ is a $4$-cycle of $\G$, a contradiction. 

Suppose now that $C$ is of type $4$ and with no loss of generality assume that $(v_0 ,\, v_1 ,\, v_2,\, v_3,\,v_4)$ is the $4$-arc of $C$. By Lemma \ref{stabilizerSize} there exist automorphisms $\alpha ,\, \beta\in G_{v_0}$ both mapping $v_1$ to $v_4$ but such that $v^\alpha_2 = u_4$ while $ v^\beta_2 = w_4$. Since clearly $v^\alpha_4 = v^\beta_4 = v_1$, we also have that $v^\alpha_3=v^\beta_3$. But then $(w_4 ,\, v^\alpha_3,\, u_4,\, v_4)$ is a $4$-cycle of $\G$, a contradiction. 
\end{proof}

In each of the next two results the Rose window graph $\mathrm{R}_{12}(5,2)$ plays an important role. This graph is rather special not only from our viewpoint but also among all Rose window graphs themselves. Namely, using the classification of all edge-transitive Rose window graphs obtained by Kov\'acs, Kutnar and Maru\v si\v c~\cite{KovKutMar10} it can be verified that $\mathrm{R}_{12}(5,2)$ is the unique edge-transitive Rose window graph of girth $5$. Namely, with the terminology from~\cite{Wil08} the graphs from the first two families (as defined in~\cite{Wil08}) of edge-transitive Rose window graphs are easily seen to be of girth at most $4$, while the ones from the third family (we point out that in~\cite{KovKutMar10} this family is called {\em family (d)}) are bipartite, and so cannot be of girth $5$. Finally, it is not difficult to verify that the members of the fourth family are all of girth $6$, except for the two smallest examples $\mathrm{R}_{12}(1,4)$ and $\mathrm{R}_{12}(5,2)$, which are of girth $3$ and $5$, respectively.

\begin{proposition}
\label{no3}
Let $\Gamma$ be a tetravalent graph of girth $5$ admitting a half-arc-transitive subgroup $G\leq\text{Aut}(\Gamma )$ such that a corresponding $G$-induced orientation $\vec{\Gamma}$ gives rise to $5$-cycles of type $3$. Then $\Gamma \cong  \mathrm{R}_{12}(5,2)$.
\end{proposition}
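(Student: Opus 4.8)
The plan is to reconstruct $\Gamma$ completely from the local structure around a single $5$-cycle of type $3$, exploiting the fact that $G_v \cong \mathbb{Z}_2$ for every vertex $v$ (Proposition~\ref{z2}), and then to exhibit an explicit isomorphism onto $\mathrm{R}_{12}(5,2)$. Fix a $5$-cycle $C = (v_0,v_1,v_2,v_3,v_4)$ of type $3$ and adopt the labeling of $N[C]$ from Agreement~\ref{okolicaC}. First I would pin down every edge orientation incident to $C$. Taking $(v_0,v_1,v_2,v_3)$ to be the $3$-arc of $C$, the definition of type~$3$ forces $v_0 \to v_4$ and $v_4 \to v_3$, so that $v_0$ is a source and $v_3$ a sink of the orientation $\vec{\Gamma}$ restricted to $C$, while $v_1,v_2,v_4$ are transit vertices; feeding this into the Agreement convention then determines the orientations of all ten edges joining $C$ to $\{u_i,w_i : i \in \mathbb{Z}_5\}$, and records the two in-neighbours and two out-neighbours of each $v_i$.

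The core of the argument is to use, at each vertex $v_i$, the unique nontrivial element $\tau_i \in G_{v_i}$, which (by the proof of Lemma~\ref{stabilizerSize}) interchanges the two in-neighbours and interchanges the two out-neighbours of $v_i$. Applying $\tau_i$ to $C$ produces further $5$-cycles which, being images of $C$, are again of type~$3$ by Lemma~\ref{basicProperties}\ref{i}. Because $\Gamma$ has girth $5$, these images cannot create $3$- or $4$-cycles, and Lemma~\ref{basicProperties}\ref{ii} (a $3$-arc lies on at most one $5$-cycle) severely limits how the new cycles may overlap the old ones. Tracking the images of $v_2,v_3$ under $\tau_0$, of the endpoints of the $3$-arc under $\tau_1,\tau_2$, and so on, forces a chain of identifications and adjacencies among the outer vertices $u_i,w_i$ (for instance that certain $u_i$ and $w_j$ coincide, and that prescribed pairs among them are joined by edges of $\vec{\Gamma}$ with forced orientation). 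Each such deduction either closes a new type-$3$ cycle or would violate girth $5$, so at every step there is essentially a single admissible possibility.

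Iterating this forcing, the type-$3$ $5$-cycles overlap along $2$-arcs in a periodic chain, and I would show this chain must wrap around after exactly twelve steps, giving $|V(\Gamma)| = 24$; the point is that the order-$2$ stabilizers together with the girth-$5$ condition leave no room for a larger graph, since any longer period reintroduces one of the forbidden short cycles or a repeated $3$-arc lying on two $5$-cycles. Having pinned down the $24$ vertices and their adjacencies, I would conclude by writing down an explicit bijection from $\{v_i,u_i,w_i\}$ onto the vertex set $\{x_i,y_i : i \in \mathbb{Z}_{12}\}$ of $\mathrm{R}_{12}(5,2)$ that matches the four edge orbits, and verify it is an isomorphism. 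Alternatively, once $\Gamma$ is shown to carry the requisite $\mathbb{Z}_{12}$-symmetry, one may argue that $\Gamma$ is an edge-transitive Rose window graph of girth $5$ and invoke the Kov\'acs--Kutnar--Maru\v si\v c classification, which (as noted before the statement) singles out $\mathrm{R}_{12}(5,2)$ as the unique such graph.

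The main obstacle I anticipate is precisely the global closure step: the local forcing guarantees that $\Gamma$ looks like $\mathrm{R}_{12}(5,2)$ in a neighbourhood of $C$, but ruling out all larger graphs satisfying the same local constraints requires careful global book-keeping of the overlapping $5$-cycles and their orientations, making sure that no alternative way of closing the chain survives the girth-$5$ and $|G_v|=2$ restrictions. Managing the case analysis for how the two families $\{u_i\}$ and $\{w_i\}$ interconnect, and checking that exactly one configuration avoids short cycles, is where the bulk of the careful (if routine) work lies.
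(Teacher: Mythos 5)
Your opening moves match the paper's: fix a type-$3$ cycle $C$ with $3$-arc $(v_0,v_1,v_2,v_3)$, deduce $v_0\to v_4\to v_3$, invoke Proposition~\ref{z2} to get $|G_v|=2$, and use the unique involution in each vertex stabilizer (equivalently, the two $G$-orbits $O_1,O_2$ on $2$-arcs from Lemma~\ref{stabilizerSize}) together with girth $5$ and Lemma~\ref{basicProperties}\ref{ii} to force adjacencies among the outer vertices $u_i,w_i$. The paper carries out exactly this kind of forcing, though it organizes it via a counting argument (each type-$3$ cycle has one $2$-arc in $O_1$ and two in $O_2$, so $2$-arcs in $O_1$ lie on one such cycle and those in $O_2$ on two) rather than via repeated application of the stabilizer involutions; these are essentially equivalent.

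The genuine gap is the global closure step, which you correctly identify as the main obstacle but for which you supply only the assertion that the chain of overlapping type-$3$ cycles ``must wrap around after exactly twelve steps.'' Nothing in the local forcing by itself bounds the number of new vertices: each forced $5$-cycle can legitimately introduce fresh vertices (the paper's $x,y,z,z'$ are exactly such), and girth $5$ plus $|G_v|=2$ do not on their own prevent the configuration from growing indefinitely or closing at some larger order. The paper's missing ingredient is the theory of $G$-alternating cycles: the forced oriented $6$-cycle $(w_4,u_1,v_1,v_2,v_3,v_4)$ shows (via~\cite[Proposition~2.6]{MTAO}) that any two intersecting $G$-alternating cycles share every third vertex; one then exhibits four specific alternating cycles, shows they pairwise intersect and are therefore the \emph{only} alternating cycles of $\G$, and deduces that each has length $12$, whence $|V(\G)|=4\cdot 12/2=24$. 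Only after the order is pinned down does the identification with $\mathrm{R}_{12}(5,2)$ (by census inspection, or by your proposed explicit isomorphism) become available; your alternative of invoking the Kov\'acs--Kutnar--Maru\v si\v c classification also presupposes that $\G$ is already known to be a Rose window graph, which again requires the global reconstruction. So the proposal needs a concrete mechanism, such as the alternating-cycle intersection argument, to replace the unsupported periodicity claim.
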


\begin{proof}
Let $C = (v_0,\,v_1,\,v_2,\,v_3,\,v_4)$ be a $5$-cycle of type $3$ and denote the vertices of $N[C]$ as in Agreement~\ref{okolicaC}. With no loss of generality we can assume that $(v_0,\,v_1,\,v_2,\,v_3)$ is the $3$-arc of $C$. By Proposition \ref{z2} the stabilizer in $G$ of any vertex of $\Gamma$ is isomorphic to $\mathbb{Z}_2$. Proposition \ref{stabilizerSize} thus implies that $G$ has two orbits on the set of all $2$-arcs of $\vec{\Gamma}$, say $O_1$ and $O_2$, where $O_1$ contains the $2$-arc $(v_0,\, v_4,\, v_3)$. Note that $(v_0,\, v_1,\, v_2) \notin O_1$ as otherwise the unique involution of $G_{v_0}$ interchanges adjacent vertices $v_2$ and $v_3$. Similarly, $(v_1,\, v_2,\, v_3)\notin O_1$, as otherwise the unique involution of $G_{v_3}$ interchanges $v_0$ and $v_1$. Therefore, $(v_0,\, v_1,\, v_2), (v_1,\, v_2,\, v_3) \in O_2$. 

Suppose there exists a $5$-cycle $(x_0,\, x_1,\, x_2,\, x_3,\, x_4)$ of type $3$ with $(x_0,\, x_1,\, x_2),\, (x_1,\, x_2,\, x_3)\in O_1$ and $(x_0,\, x_4,\, x_3)\in O_2$. Then there also exists an $\alpha \in G$ sending $(v_0,\, v_1,\, v_2)$ to $(x_0,\, x_4,\, x_3)$ and consequently $(v_0,\, v_4,\, v_3)$ to $(x_0,\, x_1,\, x_2)$. But as $(v_2,v_3) \in A(\vG)$ while $(v^\alpha_2, \, v^\alpha_3) = (x_3,\, x_2) \notin A(\vG)$, this is impossible. Therefore, each $5$-cycle of type~3 has one $2$-arc from $O_1$ and two from $O_2$. It is thus clear that $G$ is transitive on the set of $5$-cycles of type $3$ and that every $2$-arc in $O_1$ lies on one $5$-cycle of type $3$ while every $2$-arc from $O_2$ lies on two $5$-cycles of type $3$ where it is the first $2$-arc of the $3$-arc of one of them and is the last $2$-arc of the $3$-arc of the other.

\begin{figure}[h]
    \centering
    \subfloat[\centering ]{{\includegraphics[height=4 cm]{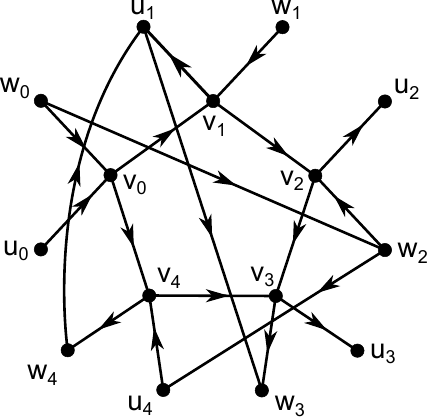} }}%
    \qquad
    \subfloat[\centering ]{{\includegraphics[height=5 cm]{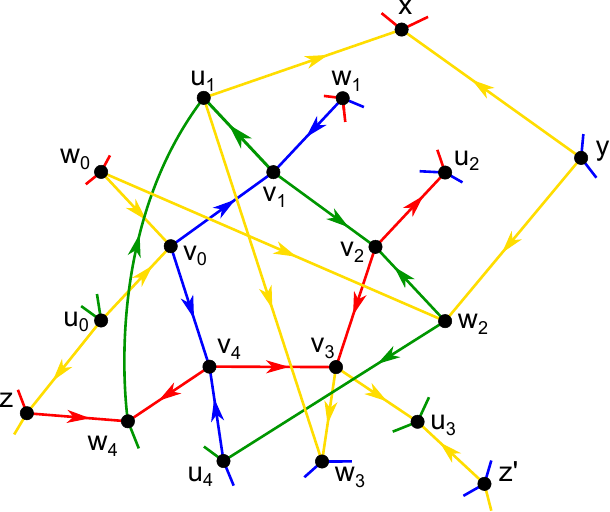} }}%
    \caption{$\vec{\Gamma}$ with $5$-cycles of type 3}%
    \label{Type3}%
\end{figure}

Since $(v_0,\, v_1,\, v_2)\in O_2$, Lemma~\ref{stabilizerSize} implies that the $2$-arc $(v_0,\, v_1,\, u_1)$ is in $O_1$, and so the only $5$-cycle it lies on must be $(v_0,\, v_1,\, u_1,\, w_4,\, v_4)$, forcing $(w_4,\, u_1)\in A(\vec{\Gamma} )$. Similarly, $(v_1,\, v_2,\, v_3)\in O_2$ yields $(w_2,\, v_2,\, v_3)\in O_1$, and so the unique $5$-cycle it lies on must be $(w_2,\, v_2,\, v_3,\, v_4,\, u_4)$, forcing $(w_2,\, u_4)\in A( \vec{\Gamma} )$. Notice that $(w_4,\, u_1,\, v_1,\, v_2,\, v_3,\, v_4)$ is a $6$-cycle whose edges have such orientation that we can conclude that every two $G$-alternating cycles with nonempty intersection must either share all vertices or exactly every third vertex (see~\cite[Proposition~2.6]{MTAO}). They cannot share all vertices since then \cite[Proposition~2.4.]{MTAO} implies that $\Gamma$ is a Cayley graph of a cyclic group and is thus of girth (at most) $4$. Every two $G$-alternating cycles with nonempty intersection therefore share exactly every third vertex (implying also that $\Gamma$ is not tightly-attached). 

The $2$-arc $(v_0,\, v_1,\, v_2)\in O_2$ is the first $2$-arc of the $3$-arc of $C$, and so there must also be a $5$-cycle $C'$ of type $3$ such that $(v_0,\, v_1,\, v_2)$ is the last $2$-arc of the $3$-arc of $C'$. Since $w_0$ and $u_0$ are both in-neighbors of $v_0$ and we have no restrictions on them thus far, we can assume we have the $5$-cycle $(v_0,\, v_1,\, v_2,\, w_2,\, w_0)$ with $(w_0,\, w_2)\in A(\vec{\Gamma} )$. Similarly, $(v_1,\, v_2,\, v_3)\in O_2$ must also be the first $2$-arc of the $3$-arc of some $5$-cycle of type $3$, and therefore we can assume $(u_1,\, w_3)\in A(\vec{\Gamma} )$. We thus have the part of $\vec{\Gamma}$ as in Figure \ref{Type3} (a). 

We now consider the $G$-alternating cycles of $\G$ (henceforth in this proof we omit the prefix $G$-) defined via the following table in which for each alternating cycle we set its name and give an alternating path it contains (note that this determines the alternating cycle completely):
$$
\begin{array}{c|c}
	\text{name} & \text{alternating path} \\
	\hline
	\mathcal{B} & (u_4,\, v_4,\, v_0,\, v_1,\, w_1) \\
	\mathcal{G} & (w_4,\,  u_1,\, v_1,\, v_2,\, w_2,\ u_4) \\
	\mathcal{R} & (w_4,\, v_4,\, v_3,\, v_2,\, u_2) \\
	\mathcal{Y} & (u_0,\, v_0,\, w_0,\, w_2)
\end{array}
$$
%
%
%
%
%
Observe first that since any two alternating cycles with nonempty intersection share every third vertex these four alternating cycles are pairwise distinct. Moreover, as $w_2\in V(\mathcal{G})\cap V(\mathcal{Y})$ and $u_1$ is at distance $3$ on $\mathcal{G}$ from it, we must have that $u_1\in V(\mathcal{Y})$ holds. Consequently, $\mathcal{Y}$ also includes the alternating path $(u_1,\, w_3,\, v_3,\, u_3)$. We thus see that any two of these four alternating cycles have a nonempty intersection. As we already know that each pair shares every third vertex, this in fact implies that $\mathcal{B}$, $\mathcal{G}$, $\mathcal{R}$ and $\mathcal{Y}$ are the only alternating cycles of $\G$. Moreover, $u_0, u_3 \in V(\mathcal{G})$, $w_0, w_1 \in V(\mathcal{R})$ and $u_2, w_3 \in V(\mathcal{B})$. 

The vertices $w_4$ and $v_2$ are at distance $3$ on both $\mathcal{G}$ and $\mathcal{R}$. Since $u_1$ and $w_2$ are at distance $3$ on $\mathcal{G}$, they must therefore also be at distance $3$ on $\mathcal{Y}$. It thus follows that there exists neighbors $x$ and $y$ of $u_1$ and $w_2$, respectively, such that $x \in V(\mathcal{Y}) \cap V(\mathcal{R})$, $y \in V(\mathcal{Y}) \cap V(\mathcal{B})$ and $(u_1,\,x,\,y,\,w_2)$ is an alternating path contained in $\mathcal{Y}$. Since $\G$ is of girth $5$ it is easy to see that $x$ and $y$ are not in $N[C]$, that is, they are ``new'' vertices. Note that $(u_0,\, u_3)\notin A(\vec{\Gamma} )$, as the length of $\mathcal{Y}$ would then be $10$, which is not divisible by $3$. None of the vertices of $N[C]$ not already included in the part of $\mathcal{Y}$ we know by now can be a vertex of $\mathcal{Y}$ since for each of them we already know on which two alternating cycles it lies. There must therefore be two ``new'' vertices, say $z \in V(\mathcal{Y}) \cap V(\mathcal{R})$ and $z' \in V(\mathcal{Y}) \cap V(\mathcal{B})$, where $(u_0,\,z), (z',u_3) \in A(\vG)$. 

To complete the proof note that the fact that $(w_0,\, v_0,\, v_1,\, v_2,\, w_2)$ is a $5$-cycle of type~3 implies that $(w_0,\,v_0,\,v_1) \in O_2$. Since we also have that $(v_0,\,v_4,\,v_3) \in O_1$, Lemma~\ref{stabilizerSize} implies that $(u_0,\,v_0,\,v_4), (v_0,\,v_4,\,w_4) \in O_2$, and so there must be a $5$-cycle of type~3 whose $3$-arc is $(u_0,\,v_0,\,v_4,\, w_4)$. This clearly forces $(z,\,w_4) \in A(\vG)$, and so we have the situation presented in Figure~\ref{Type3} (b). The vertices $z$ and $v_3$ are at distance $3$ on $\mathcal{R}$, and must therefore also be at distance $3$ on $\mathcal{Y}$. This finally forces $(z',\, z)\in A(\vec{\Gamma} )$, meaning that the alternating cycles are of length $12$ and consequently that the graph $\Gamma$ is of order $24$. Inspecting the census by Poto\v cnik and Wilson (see \cite{PotWil}) we find that $\Gamma$ must be $\mathrm{R}_{12}(5,2)$ (as it is the only graph of girth $5$ and order $24$ in the census).
\end{proof}

\begin{proposition}
\label{no4}
Let $\Gamma$ be a tetravalent graph of girth $5$ admitting a half-arc-transitive subgroup $G\leq\text{Aut}(\Gamma )$ such that a corresponding $G$-induced orientation $\vec{\Gamma}$ gives rise to $5$-cycles of type $4$. Then $\Gamma \cong \mathcal{X}_o(3,\, 9;\, 4)$ or $\Gamma \cong \mathrm{R}_{12}(5,2)$. 
\end{proposition}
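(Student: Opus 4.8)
The plan is to adapt the strategy of the proof of Proposition~\ref{no3}, the key new feature being that type-$4$ examples may be $G$-tightly-attached (in which case Theorem~\ref{TA} finishes the argument) as well as non-tightly-attached.

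First I would fix a type-$4$ $5$-cycle $C=(v_0,v_1,v_2,v_3,v_4)$ whose $4$-arc is $(v_0,v_1,v_2,v_3,v_4)$, so that $v_0$ is the tail and $v_4$ the head of both of its incident edges of $C$, and $v_0\to v_1\to v_2\to v_3\to v_4$ and $v_0\to v_4$ hold in $\vec\Gamma$. Labelling $N[C]$ as in Agreement~\ref{okolicaC}, a short check fixes the orientation of every edge joining $C$ to the set $\{u_i,w_i\colon i\in\ZZ_5\}$. By Proposition~\ref{z2} we have $G_v\cong\ZZ_2$ for all $v$, so Lemma~\ref{stabilizerSize} yields exactly two $G$-orbits $O_1,O_2$ of $2$-arcs, and the unique involution of each $G_{v_i}$ relates the four $2$-arcs through $v_i$ in the usual matching pattern. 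I would then locate the three $2$-arcs $(v_0,v_1,v_2)$, $(v_1,v_2,v_3)$, $(v_2,v_3,v_4)$ of $C$ among $O_1,O_2$, using that no involution may reverse an edge of $C$ (which, via girth $5$, would create a $3$- or $4$-cycle, exactly as in Proposition~\ref{no3}) together with Lemma~\ref{basicProperties}\ref{ii}. This should make $G$ transitive on the type-$4$ $5$-cycles and control how many of them pass through a given $2$-arc of each orbit; the adjacencies forced among the $u_i,w_i$ then follow, yielding a rigid local picture.

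The next step is to pass to the $G$-alternating cycles. A notable feature of type $4$ (in contrast to type $3$) is that the three consecutive edges $v_3v_4,\,v_4v_0,\,v_0v_1$ of $C$ all lie on a single alternating cycle, while $v_1v_2$ and $v_2v_3$ lie on two further alternating cycles; tracing these through the orientations already determined on $N[C]$ shows that they pairwise intersect. I would then exhibit a short closed walk of appropriate orientation forcing, via~\cite[Proposition~2.6]{MTAO}, that two intersecting alternating cycles either share all their vertices or meet in exactly every third vertex, the former being excluded by~\cite[Proposition~2.4]{MTAO} and girth~$5$. At this point the argument splits: if $\Gamma$ is $G$-tightly-attached then, since its $5$-cycles are undirected, Theorem~\ref{TA} gives $\Gamma\cong\mathcal{X}_o(3,9;4)$; otherwise the every-third-vertex pattern together with the rigid local structure lets me run the same vertex-accounting as in Proposition~\ref{no3}, forcing all alternating cycles to close up, every vertex of $\Gamma$ to appear in the completed neighborhood, and hence $|V(\Gamma)|=24$, so that the census~\cite{PotWil} leaves only $\mathrm{R}_{12}(5,2)$.

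The main obstacle I expect is the non-tightly-attached branch: as in Proposition~\ref{no3}, the delicate part is the bookkeeping showing the alternating cycles close after a length that pins $|V(\Gamma)|=24$, with no freedom to build a larger graph or an infinite family. One must also check that the dichotomy is genuinely sharp, i.e.\ that both $\mathcal{X}_o(3,9;4)$ and $\mathrm{R}_{12}(5,2)$ actually carry type-$4$ $5$-cycles, so that neither conclusion is vacuous.
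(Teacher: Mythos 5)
Your overall architecture matches the paper's: dispose of the tightly-attached case via Theorem~\ref{TA}, then use Proposition~\ref{z2} and Lemma~\ref{stabilizerSize} to get two orbits $O_1,O_2$ of $2$-arcs, pin down the local structure of $N[C]$, and pass to $G$-alternating cycles to force $|V(\G)|=24$ and invoke the census. However, there is a concrete error in the alternating-cycle step. You assert that two intersecting alternating cycles either share all vertices or meet in \emph{every third} vertex. That is the conclusion of the type-$3$ analysis in Proposition~\ref{no3} (where it comes from a specific $6$-cycle $(w_4,u_1,v_1,v_2,v_3,v_4)$), and it does not transfer to type $4$. In the type-$4$ configuration the forced adjacencies around $C$ put $w_1$ and $v_3$ on the same two alternating cycles at distance $4$ on each of them, so by~\cite[Proposition~2.6]{MTAO} the common vertices are evenly spaced with spacing dividing $4$: spacing $1$ is excluded by~\cite[Proposition~2.4]{MTAO} and girth, spacing $2$ would make $\G$ tightly-attached, hence the correct conclusion is \emph{every fourth} vertex. (Note that spacing $3$ is not even compatible with two intersection vertices at distance $4$.) This matters for the endgame: the correct analysis yields six alternating cycles of length $8$ (so $6\cdot 8/2=24$ vertices), whereas your every-third-vertex hypothesis would send the bookkeeping toward length-divisible-by-$3$ cycles and would collide with the distance-$4$ configuration, most likely producing a spurious contradiction that eliminates $\mathrm{R}_{12}(5,2)$ altogether.

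A second, smaller gap: you wave at ``control how many \mbox{$5$-cycles} pass through a given $2$-arc of each orbit,'' but this is the pivot of the whole proof and needs the two ingredients the paper supplies explicitly. First, a double count: every type-$4$ $5$-cycle is determined by the last arc of its $4$-arc (via Lemma~\ref{basicProperties}\ref{ii}), so there are $2|V|$ of them, all in one $G$-orbit, and since $|O_1|=|O_2|=2|V|$ one gets $t_1+t_2=3$ for the numbers $t_i$ of type-$4$ $5$-cycles through a $2$-arc of $O_i$. Second, showing the \emph{middle} $2$-arc $(v_1,v_2,v_3)$ lies on a unique such $5$-cycle requires the non-tightly-attached hypothesis (to exclude a second $5$-cycle through $u_1$ or $w_3$, which would place two vertices at distance $2$ on two common alternating cycles) together with a girth argument for the remaining subcase. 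Without these two steps the split $t_1=1$, $t_2=2$ and the placement of the first/last versus middle $2$-arcs in $O_2$ versus $O_1$ --- on which all the forced adjacencies rest --- is not justified. Finally, the realizability check you mention at the end is not needed for the proposition as stated (it is an implication), though the paper does verify it afterwards when assembling Theorem~\ref{neusmerjeni}.
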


\begin{proof}
By Theorem~\ref{TA} the only way $\G$ can be $G$-tightly-attached is if $\G \cong \mathcal{X}_o(3,\, 9;\, 4)$. For the rest of the proof we thus assume that $\Gamma$ is not $G$-tightly-attached. Let $C = (v_0,\,v_1,\,v_2,\,v_3,\,v_4)$ be a $5$-cycle of type $4$ and denote the vertices of $N[C]$ as in Agreement~\ref{okolicaC}, where with no loss of generality we assume that $(v_0,\,v_1,\,v_2,\,v_3,\,v_4)$ is the $4$-arc of $C$. By Proposition~\ref{z2} the stabilizer in $G$ of any vertex of $\Gamma$ is isomorphic to $\mathbb{Z}_2$. Proposition~\ref{stabilizerSize} thus implies that $G$ has two orbits on the set of all $2$-arcs of $\vec{\Gamma}$, say $O_1$ and $O_2$. Moreover, since each vertex of $\G$ is the middle vertex of exactly two $2$-arcs from each of $O_1$ and $O_2$, we have that $|O_1|=|O_2|=2|V|$. 

Denote the number of all $5$-cycles of type $4$ of $\G$ by $c$ and for each $i \in \{1,2\}$ let $t_i$ be the number of $5$-cycles of type $4$ on which each given $2$-arc from $O_i$ lies. Observe that each $5$-cycle of type $4$ is completely determined by specifying the last arc of its $4$-arc. Namely (using the above notation of vertices of $C$), since $(v_3,\, v_4,\, v_0,\, v_1)$ is an alternating path in $\vec{\G}$, it is completely determined by $(v_3,\, v_4)$ and then Lemma \ref{basicProperties}\ref{ii} implies that there is only one possibility for the vertex $v_2$. This shows that $c$ equals the number of arcs of $\vec{\Gamma}$, that is, $c = 2|V|$, and that all $5$-cycles of type $4$ are in the same $G$-orbit. Counting the number of pairs of a $2$-arc of $\vec{\Gamma}$ and a $5$-cycle of type~4 containing it in two different ways we find that $|O_1|\, t_1+|O_2|\, t_2 = 3c$, and so 
$t_1+t_2 = 3$. 

Suppose the $2$-arc $(v_1,\, v_2,\, v_3)$ lies on a $5$-cycle $C'$ of type $4$, different from $C$. Lemma \ref{basicProperties}\ref{ii} then implies that $C'$ contains one of $u_1$ and $w_1$ and one of $u_3$ and $w_3$. Since $C'$ is of type $4$, it contains at most one of $u_1$ and $w_3$. If it contains $u_1$, then $(u_3,\,u_1) \in A(\vG)$, and so $v_1$ and $u_3$ are on the same two $G$-alternating cycles (we omit the prefix $G$- henceforth in this proof) and have distance $2$ on one of them, contradicting the fact that $\G$ is not tightly-attached. Similarly, if $C'$ contains $w_3$, then $(w_3,\,w_1) \in A(\vG)$, and so $w_1$ and $v_3$ are on the same two alternating cycles having distance $2$ on one of them. Therefore, $C'$ contains $w_1$ and $u_3$ and $(w_1,\,u_3) \in A(\vG)$. But then the unique involution of $G_{v_1}$ fixes $v_3$, implying that $v_1$ and $v_3$ are antipodal vertices of a $4$-cycle, a contradiction. This shows that $(v_1,\, v_2,\, v_3)$ lies on a unique $5$-cycle of type $4$, proving that one of $t_1$ and $t_2$ is $1$ and the other is $2$. With no loss of generality we can assume $t_1 = 1$ and $t_2 = 2$. This also shows that $(v_0,\,v_1,\,v_2), (v_2,\,v_3,\,v_4) \in O_2$, since we can otherwise map one of these $2$-arcs to $(v_1,\,v_2,\,v_3)$, providing another $5$-cycle through this $2$-arc. Since there is just one $G$-orbit of $5$-cycles of type~4 this shows that for each $5$-cycle $C''$ of type $4$ the first and last $2$-arc of its $4$-arc (which we simply call the {\em first} and {\em last} $2$-arc of $C''$, respectively) are in $O_2$ while the middle one (we call it the {\em middle} $2$-arc of $C''$) is in $O_1$. 

With no loss of generality we can assume that $(u_0,\, v_0,\, v_1),\, (v_3,\, v_4,\, u_4)\in O_1$. Then the second $5$-cycle through the $2$-arc $(v_0,\, v_1 ,\, v_2 )$ and $(v_2,\, v_3 ,\, v_4)$, respectively, is $(v_0,\, v_1 ,\, v_2,\, w_2,\, u_0)$ and $(v_2,\, v_3 ,\, v_4,\, u_4,\, u_2)$, respectively, giving us $(w_2,\, u_0),\, (u_4,\, u_2)\in A(\vec{\Gamma} )$. Recall that each alternating path of length $3$ lies on a (unique) $5$-cycle of type $4$. Considering the alternating paths $(v_0,\, v_4,\, v_3,\, u_3)$ and $(v_4,\, v_0,\, v_1,\, w_1)$ and taking into account that by Lemma \ref{stabilizerSize} $(u_0,\, v_0,\, v_4)$, $(v_0,\, v_4,\, u_4)\in O_2$ we find that $(u_3,\, u_0),\, (u_4,\, w_1)\in A(\vec{\Gamma} )$, giving us the part of $\vec{\Gamma}$ in Figure \ref{type4} (a).

\begin{figure}[h]
    \centering
    \subfloat[\centering ]{{\includegraphics[height=3 cm]{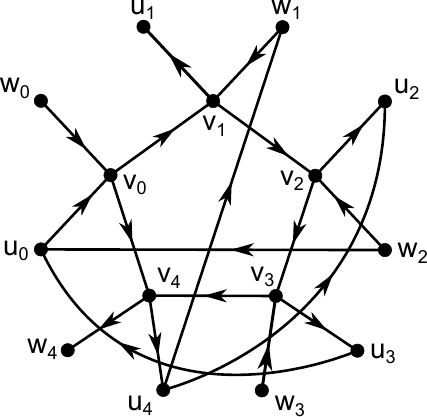} }}%
    \qquad
    \subfloat[\centering ]{{\includegraphics[height=4 cm]{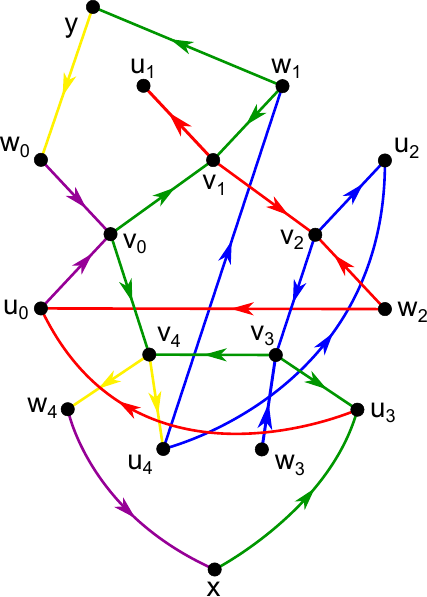} }}%
    \qquad
    \subfloat[\centering ]{{\includegraphics[height=4 cm]{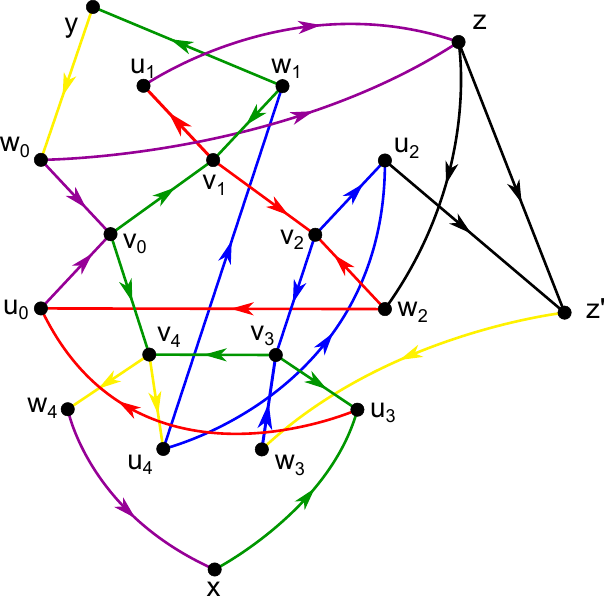} }}%
    \qquad
    \subfloat[\centering ]{{\includegraphics[height=4 cm]{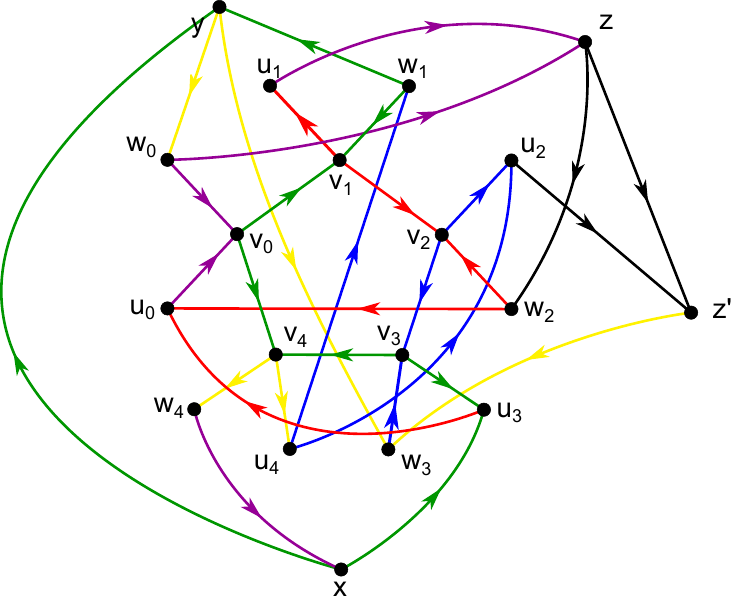} }}%
    \qquad
    \caption{$\vec{\Gamma}$ with $5$-cycles of type 4}%
    \label{type4}%
\end{figure}

This reveals that the vertices $w_1$ and $v_3$ lie on the same two alternating cycles and that their distance on each of them is a divisor of $4$. As $\Gamma$ is not tightly-attached we conclude that every two alternating cycles with nonempty intersection share exactly every fourth vertex. Consequently, the alternating cycles are of length at least $8$ and every alternating cycle has a nonempty intersection with precisely $4$ other alternating cycles. 

As in the proof of Proposition~\ref{no3} we now consider alternating cycles of $\G$ which we again describe via a  table in which for each alternating cycle we set its name and give an alternating path it contains:
$$
\begin{array}{c|c}
	\text{name} & \text{alternating path} \\
	\hline
	\mathcal{B} & (w_1,\, u_4,\, u_2,\, v_2,\, v_3,\, w_3) \\
	\mathcal{G} & (w_1,\,  v_1,\, v_0,\, v_4,\, v_3,\, u_3) \\
	\mathcal{R} & (u_1,\, v_1,\, v_2,\, w_2,\, u_0,\, u_3) \\
	\mathcal{Y} & (w_4,\, v_4,\, u_4) \\
	\mathcal{P} & (w_0,\, v_0,\, u_0)
\end{array}
$$
%
%
%
%
%

Since any two alternating cycles with nonempty intersection share every fourth vertex, these five alternating cycles are pairwise distinct. Let us consider how the cycle $\mathcal{G}$ extends beyond the above given alternating path. We cannot have $(w_1,\, u_3)\in A(\vec{\Gamma} )$ as $\mathcal{G}$ must be of length at least $8$. Moreover, none of the vertices of $N[C]$ which are not already in the above alternating path included in $\mathcal{G}$ can be a vertex of $\mathcal{G}$ since each of them has distance $1$ or $2$ to a different vertex of $\mathcal{G}$ via an alternating cycle different from $\mathcal{G}$. There are thus distinct vertices $x$ and $y$ outside $N[C]$ such that $(x,\, u_3)$ and $(w_1,\, y)\in A(\vec{\Gamma})$. Each of the alternating paths $(y,\, w_1,\, v_1,\, v_0)$ and $( v_4,\, v_3,\, u_3,\, x)$ must lie on a $5$-cycle of type $4$, and so the fact that $(w_0,\, v_0,\, v_1),\, (v_3,\, v_4,\, w_4)\in O_2$ forces $(y,\, w_0),\, (w_4,\, x)\in A(\vec{\Gamma} )$. Since $v_4 \in V(\mathcal{Y})$ and $y$ is at distance $4$ on $\mathcal{G}$ from $v_4$, it follows that $y \in V(\mathcal{Y})$ and similarly $x \in V(\mathcal{P})$. This gives us the part of $\vec{\Gamma}$ as in Figure \ref{type4} (b). 

Since $(w_0,\, v_0,\, v_1)\in O_2$, this $2$-arc must be the first $2$-arc of a $5$-cycle of type $4$ and since $(v_0,\, v_1,\, u_1)\in O_1$, this $5$-cycle must include $u_1$. There thus is a vertex $z$ of $\Gamma$ such that $(w_0,\, z),\, (u_1,\, z)\in A(\vec{\Gamma})$ and these two arcs are both contained in $\mathcal{P}$. Of the vertices already on Figure~\ref{type4}~(b) only $w_3$ is a potential candidate for $z$ (if $z = w_2$ we get a $4$-cycle). But as the distance between $w_3$ and $u_4$ on $\mathcal{B}$ is $4$ and $u_4\in V(\mathcal{Y})$ we have that $w_3\in V(\mathcal{Y})\cap V(\mathcal{B})$, showing that $z\ne w_3$. Thus, $z$ is a ``new'' vertex. 

As the $2$-arc $(v_1,\, u_1 ,\, z)\in O_2$ also is the first $2$-arc of some $5$-cycle of type $4$, we must have $(z,\, w_2 )\in A(\vec{\Gamma} )$. Similarly $(z,\, w_2 ,\,  v_2)\in O_2$ must be the first $2$-arc on some $5$-cycle of type $4$, and so as $(w_2,\,v_2,\,u_2) \in O_1$, $z$ and $u_2$ have a common out-neighbor, say $z'$. It can easily be shown that $z'$ must again be a ``new'' vertex. Since $(v_2,\, u_2 ,\,  z')\in O_2$ and $(w_3,\, v_3,\, v_2,\, u_2)$ is an alternating path, $(z',\, w_3)\in A( \vec{\Gamma} )$ holds. Recall that $w_3\in V(\mathcal{Y})$. This gives us the part of $\vec{\Gamma}$ as in Figure \ref{type4} (c).

To complete the proof note that as $(y,\, w_0,\, v_0) \in O_1$ the $2$-arc $(y,\, w_0,\, z)\in O_2$ is the first $2$-arc of a $5$-cycle of type $4$. Since $(w_0,\,z,\,w_2) \in O_2$ and $(u_2,\,z',\,w_3) \in O_1$, we have that $(w_0,\, z,\, z')\in O_1$ and $(z,\, z',\, w_3)\in O_2$, and so we must have $(y,\, w_3)\in A( \vec{\Gamma} )$. Lastly, since $(z',\, w_3,\, v_3) \in O_2$, we have that $(y,\, w_3,\, v_3)\in O_1$, and so the fact that $(w_3,\, v_3,\, u_3)\in O_2$ implies that the unique $5$-cycle of type $4$ containing $(y,\, w_3,\, v_3)$ is $(y,\, w_3,\, v_3,\, u_3,\, x)$, implying that $(x,\, y)\in A(\vec{\Gamma}) $. This means that $\mathcal{G}$, and consequently each alternating cycle, is of length $8$. Inspecting them on Figure \ref{type4} (d) we notice that the alternating path $(w_2,\, z,\, z',\, u_2)$ must be a part of a ``new'' alternating cycle. We now see that each of these six alternating cycles shares vertices with four other ones that we already have. This means that $\Gamma$ has precisely $6$ alternating cycles of length $8$ and consequently is of order $24$. As in the proof of Proposition \ref{no3} this implies that $\Gamma$ is $\mathrm{R}_{12}(5,2)$.
\end{proof}

That the Rose window graph $\G = \mathrm{R}_{12}(5,2)$ does indeed admit half-arc-transitive groups $G_1$ and $G_2$ giving rise to $5$-cycles of type~3 and~4, respectively, can be verified by {\sc Magma}~\cite{magma}. But one can also use the results of~\cite{RoseWindow} where the full automorphism group of each edge-transitive Rose window graph was determined. In particular,~\cite[Proposition 3.5.]{RoseWindow} implies that $\Aut(\G) = \langle \rho,\, \mu ,\, \sigma \rangle$, where 
\begin{align*}
\rho & =(x_0,\, x_1,\,\ldots ,\, x_{11})(y_0,\, y_1,\, \ldots ,\, y_{11}),\\
\mu & = \Pi_{i=0}^{11} (x_i,\, x_{-i})(y_i,\, y_{-i-5})\ \text{and}\\
\sigma & = (x_1\, y_0)(x_2\, y_{10})(x_4\, y_3)(x_5\, y_1)(x_7\, y_6)(x_8\, y_4)(x_{10} \, y_9)(x_{11}\, y_7)(y_2\, y_8)(y_5\, y_{11}),
\end{align*}
and that $\Aut(\G)$ acts regularly on the arc set $A(\G)$. It is clear that both $G_1 = \langle\rho,\, \sigma \rangle$  and $G_2 = \langle \rho^2,\, \sigma\rho ,\, \mu \rangle$ are vertex- and edge-transitive (note that $\langle \rho^2 ,\, \mu \rangle$ has just one orbit on the set $\{y_i \colon i \in \ZZ_{12}\}$). It is easy to verify that $\{x_0,x_6,y_2,y_8\}$ is an imprimitivity block for $G_1$, and so the fact that $\mu$ preserves $x_0$ but maps $y_2$ to $y_5$ shows that $G_1$ is a proper subgroup of $\Aut(\G)$, implying that it is half-arc-transitive. Similarly, one can show that $\{x_0,x_4,x_8\}$ is an imprimitivity block for $G_2$, and so the fact that $\sigma$ preserves $x_0$ but maps $x_4$ to $y_3$ shows that $G_2$ is also half-arc-transitive. Note that this also shows that $G_1 \neq G_2$ (since $\sigma \in G_1$). For each of $G_1$ and $G_2$ one of the induced orientations of the edges of $\G$ is presented in Figure~\ref{RW12}, which reveals that the $5$-cycles are of type~4 for $G_1$ and are of type $3$ for $G_2$. 
\begin{figure}[h]
    \centering
\includegraphics[height=6 cm]{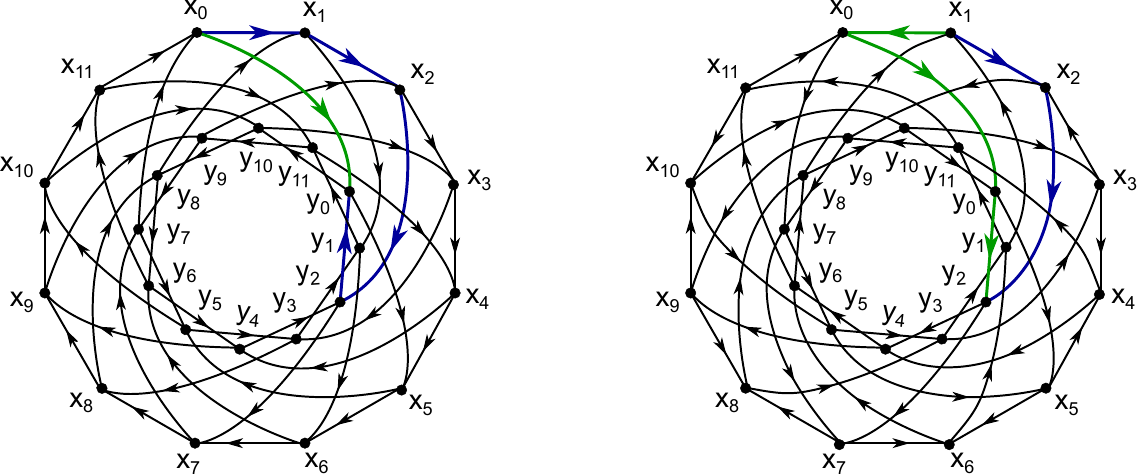}
    \caption{Orientations of the Rose window graph $\mathrm{R}_{12}(5,2)$ giving rise to $5$-cycles of type $4$ (left) and of type $3$ (right). }%
    \label{RW12}%
\end{figure}
Observe that the two representations of $\mathrm{R}_{12}(5,2)$ in Figure~\ref{RW12} also show that in the $G_1$-induced orientation of the edges all $5$-cycles are of type $4$, while in the $G_2$-induced orientation they are all of type~3. Combining this with Theorem~\ref{TA}, Proposition~\ref{no3} and Proposition~\ref{no4} thus gives a complete classification of the tetravalent graphs of girth $5$ admitting a half-arc-transitive group action giving rise to undirected $5$-cycles. 


\begin{theorem}
\label{neusmerjeni}
A tetravalent graph $\Gamma$ of girth $5$ admits a half-arc-transitive subgroup $G\leq\text{Aut}(\Gamma )$ giving rise to undirected $5$-cycles if and only if $\Gamma \cong \mathcal{X}_o(3,\, 9;\, 4)$ or $\Gamma \cong \mathrm{R}_{12}(5,2)$. 
\end{theorem}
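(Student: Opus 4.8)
The plan is to obtain the theorem as a synthesis of the structural results already assembled in this section together with Theorem~\ref{TA}, so the argument splits cleanly into the two implications and requires no new computation. For the necessity direction I would first unpack the hypothesis: saying that a $G$-induced orientation $\vec{\Gamma}$ gives rise to undirected $5$-cycles means precisely that $\Gamma$ contains at least one $5$-cycle of type $2$, $3$ or $4$. Proposition~\ref{no2} immediately eliminates type $2$, so such a $\Gamma$ must carry a $5$-cycle of type $3$ or of type $4$. I would then branch on which type is present: the existence of a type~$3$ cycle forces $\Gamma \cong \mathrm{R}_{12}(5,2)$ by Proposition~\ref{no3}, while the existence of a type~$4$ cycle forces $\Gamma \cong \mathcal{X}_o(3,9;4)$ or $\Gamma \cong \mathrm{R}_{12}(5,2)$ by Proposition~\ref{no4}. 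In either branch the stated conclusion follows, and I would emphasize that the mere existence of a single undirected $5$-cycle suffices to trigger the relevant proposition, so no issue of $G$-transitivity on the set of $5$-cycles needs to be addressed here.

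For the sufficiency direction I would exhibit, for each of the two candidate graphs, a half-arc-transitive group whose induced orientation yields undirected $5$-cycles. For the Doyle--Holt graph $\mathcal{X}_o(3,9;4)$ this is already recorded in Theorem~\ref{TA}, which asserts that it is a tightly-attached half-arc-transitive graph of girth $5$ whose $5$-cycles are undirected (of type~$4$). For $\mathrm{R}_{12}(5,2)$ I would invoke the explicit construction carried out just before the statement: the subgroups $G_1 = \langle \rho, \sigma\rangle$ and $G_2 = \langle \rho^2, \sigma\rho, \mu\rangle$ of $\Aut(\mathrm{R}_{12}(5,2))$ were shown to be half-arc-transitive, and Figure~\ref{RW12} confirms that their induced orientations produce $5$-cycles of type~$4$ and type~$3$ respectively. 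Both of these types are undirected, which completes this direction.

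I do not expect a genuine obstacle at this final stage, since the theorem is essentially a corollary assembling Theorem~\ref{TA}, Proposition~\ref{no3}, Proposition~\ref{no4} and the $\mathrm{R}_{12}(5,2)$ verification; the substantive difficulty already lay in proving those propositions, most notably the intricate alternating-cycle bookkeeping in Propositions~\ref{no3} and~\ref{no4}. The only point I would handle with a touch of care is keeping the logical quantifiers explicit in the necessity direction, namely reading ``admits a half-arc-transitive $G$ giving rise to undirected $5$-cycles'' as the existence of one such cycle, which is exactly the hypothesis under which the two propositions operate.
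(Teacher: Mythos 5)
Your proposal is correct and follows essentially the same route as the paper: the authors likewise obtain the theorem by combining Proposition~\ref{no2} (ruling out type~$2$), Propositions~\ref{no3} and~\ref{no4} (forcing $\mathrm{R}_{12}(5,2)$ or $\mathcal{X}_o(3,9;4)$), Theorem~\ref{TA}, and the explicit verification that the subgroups $G_1$ and $G_2$ of $\Aut(\mathrm{R}_{12}(5,2))$ are half-arc-transitive with $5$-cycles of types~$4$ and~$3$, respectively. No discrepancies to report.
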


The above theorem gives another reason why the Doyle-Holt graph is special. Not only is it the smallest half-arc-transitive graph but it is also the unique tetravalent half-arc-transitive graph of girth $5$ where in any of the two orientations of the edges induced by the action of the automorphism group the $5$-cycles are not directed. 


\end{section}

\begin{section}{$G$-half-arc-transitive graphs with directed $5$-cycles}
\label{sec:directed}

In this section we consider tetravalent graphs of girth $5$ admitting a half-arc-transitive group action such that a corresponding orientation of the edges gives rise to directed $5$-cycles. Throughout this section let $\G$ be such a graph, $G \leq \Aut(\G)$ be such a half-arc-transitive subgroup and $\vG$ be a corresponding orientation. By Theorem~\ref{neusmerjeni} and the comments preceding it $\G$ has no undirected $5$-cycles, and so all of its $5$-cycles are directed. Lemma~\ref{basicProperties}\ref{ii} thus implies that each $2$-arc of $\vec{\Gamma}$ lies on at most two $5$-cycles. We first show that it in fact lies on at most one.

\begin{proposition}\label{2arc5c}
Let $\Gamma$ be a tetravalent graph of girth $5$ admitting a half-arc-transitive subgroup $G\leq\text{Aut}(\Gamma )$ such that a corresponding $G$-induced orientation $\vec{\Gamma}$ gives rise to directed $5$-cycles. Then a $2$-arc of  $\vec{\Gamma}$ lies on at most one $5$-cycle.
\end{proposition}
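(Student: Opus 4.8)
The plan is to argue by contradiction. Suppose some $2$-arc $a\to b\to c$ of $\vG$ lies on two $5$-cycles. By the standing assumption all $5$-cycles are directed, so each of these two $5$-cycles traverses $a\to b\to c$ and then returns to $a$ by a directed $3$-path; since $c$ has exactly two out-neighbours and $a$ exactly two in-neighbours in $\vG$, I would write the two $5$-cycles as $C_i=(a,b,c,d_i,e_i)$ with $c\to d_i\to e_i\to a$ for $i\in\{1,2\}$. First I would extract the local picture: $d_1\ne d_2$ and $e_1\ne e_2$ (otherwise a common vertex together with $c$, resp. $a$, yields a $4$-cycle), and then use girth $5$ to rule out every short-circuiting adjacency among the six vertices $c,d_1,e_1,a,e_2,d_2$ --- in particular the arcs $d_1\to e_2$ and $d_2\to e_1$ and their reverses are impossible, as each would close up a $4$-cycle with $c$. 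This shows the six vertices are distinct and induce a $6$-cycle $H=(c,d_1,e_1,a,e_2,d_2)$ whose arcs form three consecutive forward arcs followed by three consecutive backward arcs.

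The next step is to feed $H$ into the alternating-cycle machinery. Its orientation pattern is exactly the one that, by \cite[Proposition 2.6]{MTAO}, forces any two $G$-alternating cycles with nonempty intersection to share either all of their vertices or exactly every third vertex; the first option is excluded because by \cite[Proposition 2.4]{MTAO} it would make $\G$ a Cayley graph of a cyclic group, hence of girth at most $4$. Thus the alternating cycles meet in every third vertex. I would then record which alternating cycles carry the edges of $C_1$ and $C_2$: both $5$-cycles run through the same four alternating cycles --- those containing $a\to b$, $b\to c$, the common out-cycle of $c$ (which carries both $c\to d_1$ and $c\to d_2$) and the common in-cycle of $a$ (which carries both $e_1\to a$ and $e_2\to a$) --- and differ only in the alternating cycle through the arc $d_i\to e_i$. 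Using Agreement~\ref{okolicaC} and the every-third-vertex rule I would pin down the cyclic positions of $d_1,d_2$ on the out-cycle of $c$ and of $e_1,e_2$ on the in-cycle of $a$.

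From here I would close the argument by propagation, in the spirit of Propositions~\ref{no3} and~\ref{no4}: repeatedly applying the stabilizer involutions $\iota_v\in G_v$ that interchange the two in-neighbours and the two out-neighbours of $v$ (which exist by half-arc-transitivity) to extend the known part of $\vG$, and invoking the every-third-vertex attachment to force successive vertex identifications, until either a cycle of length at most $4$ appears or the graph is driven into one of the already-excluded tightly-attached or undirected examples. To organise the cases I would split on $|G_v|$ via Lemma~\ref{stabilizerSize}: when $|G_v|>2$ the action is transitive on $2$-arcs of $\vG$ and the extra stabilizer element fixing both out-neighbours of $c$ while interchanging its in-neighbours rigidifies the configuration enough to produce a short cycle quickly, whereas when $|G_v|=2$ I would work with the two $2$-arc orbits $O_1,O_2$ and exploit the oddness of a directed $5$-cycle. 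The main obstacle is precisely that the local configuration around $H$ is self-consistent --- it is even compatible with $C_1$ and $C_2$ being $G$-consistent cycles --- so no purely local contradiction exists; the delicate point is to control the propagation so that it genuinely closes into a forbidden short cycle rather than continuing indefinitely into an admissible large pattern, which is what makes the every-third-vertex attachment and the stabilizer dichotomy indispensable.
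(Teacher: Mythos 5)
Your reduction to the induced oriented $6$-cycle $H=(c,d_1,e_1,a,e_2,d_2)$ is fine as far as it goes, but the pivotal step --- feeding $H$ into \cite[Proposition~2.6]{MTAO} to conclude that intersecting $G$-alternating cycles share every third vertex --- does not work. The orientation pattern of $H$ is $c\to d_1\to e_1\to a\leftarrow e_2\leftarrow d_2\leftarrow c$: the only vertices of $H$ at which the two incident edges of $H$ have opposite orientations along the traversal (i.e.\ both heads or both tails) are $c$ and $a$, so $H$ decomposes into two \emph{directed} $3$-paths from $c$ to $a$, not into two \emph{alternating} $3$-paths. Consequently $H$ does not exhibit two $G$-alternating cycles meeting in two vertices at alternating distance $3$, and no conclusion about attachment sets follows. (Compare the proof of Proposition~\ref{no3}, where the $6$-cycle $(w_4,u_1,v_1,v_2,v_3,v_4)$ really does split into two alternating $3$-paths between the antipodal vertices $w_4$ and $v_2$; that is what licenses the appeal to \cite[Proposition~2.6]{MTAO} there.) The alternating cycles visible in your configuration are: one through $d_1\leftarrow c\to d_2$, one through $e_1\to a\leftarrow e_2$, and one through each arc $d_i\to e_i$; none of these pairs is forced to meet twice, so the ``every-third-vertex'' machinery never gets off the ground.

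Beyond that, the remainder of the argument is a plan rather than a proof: ``propagate using stabilizer involutions until a short cycle appears or the graph is driven into an excluded example'' is precisely the part that needs to be carried out, and you yourself concede that the local configuration around $H$ is self-consistent and that controlling the propagation is the unresolved difficulty. For comparison, the paper's proof avoids alternating cycles entirely. It splits on whether some $2$-arc lies on only one $5$-cycle. If so, Lemma~\ref{stabilizerSize} gives two $G$-orbits $O_1,O_2$ of $2$-arcs with trivial arc-stabilizers; a short analysis of the $5$-cycle through a $2$-arc of $O_1$ shows every $5$-cycle carries at most one $2$-arc from $O_1$, and then Lemma~\ref{stabilizerSize} produces a $5$-cycle carrying two, a contradiction. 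If instead every $2$-arc lies on two $5$-cycles, then every $3$-arc lies on exactly one $5$-cycle; applying this to the $3$-arcs around a fixed $5$-cycle $C$ forces the arcs $(u_{i+2},w_i)$ and then three new vertices $x,y,z$ whose forced adjacencies close into a $4$-cycle. You would need to supply an argument of comparable substance in place of your propagation step; as written, the proposal has a genuine gap both in its key lemma invocation and in its unexecuted endgame.
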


\begin{proof}
By way of contradiction suppose there exists a $2$-arc of $\vec{\Gamma}$ which lies on two $5$-cycles. 
We distinguish two possibilities depending on whether all $2$-arcs of $\vG$ lie on two $5$-cycles or not.

Suppose first that some $2$-arcs of $\vG$ lie on one $5$-cycle. Lemma~\ref{stabilizerSize} then implies that $G$ has two orbits on the set of all $2$-arcs of $\vG$, say $O_1$ and $O_2$, where the ones in $O_1$ lie on one $5$-cycle of $\G$ and those in $O_2$ lie on two. Moreover, $|G_v| = 2$ for all $v \in V(\G)$, and so the stabilizer in $G$ of an arc of $\vG$ is trivial. For any two $2$-arcs of $\vec{\Gamma}$ from the same $G$-orbit there is thus a unique element of $G$ mapping one to the other. 

Let $(v_0,\, v_1,\, v_2)\in O_1$ and let $C=(v_0,\, v_1,\, v_2,\, v_3,\, v_4)$ be the unique $5$-cycle of $\Gamma$ containing $(v_0,\, v_1,\, v_2)$. Denote the vertices of $N[C]$ as in Agreement~\ref{okolicaC} and recall that the orientations of the edges of $N[C]$ are as in Figure~\ref{figure3}. As $(v_0,\, v_1,\, v_2)\in O_1$, Lemma~\ref{stabilizerSize} implies that $(v_0,\, v_1,\, u_1),\, (w_1,\, v_1,\, v_2)\in O_2$. By Lemma~\ref{basicProperties}\ref{ii} there thus exist $5$-cycles in $\Gamma$, one going through $(v_4,\, v_0,\, v_1,\, u_1)$ and one through $(w_1,\, v_1,\, v_2,\, v_3)$, implying that $(v_4,\, v_0,\, v_1),\, (v_1,\, v_2,\, v_3)\in O_2$. Suppose now that $(v_i,\, v_{i+1},\, v_{i+2})\in O_1$ for some $i\in\mathbb{Z}_5$. The unique element of $G$ mapping $(v_0,\, v_1,\, v_2)$ to $(v_i,\, v_{i+1},\, v_{i+2})$ then fixes $C$ set-wise, and so as $C$ is of prime length $5$ and $(v_4,\, v_0,\, v_1) \in O_2$, $i=0$ must hold. This shows that each $5$-cycle has at most one $2$-arc from $O_1$. Lemma~\ref{stabilizerSize} thus implies that $(w_2,\, v_2, \, v_3), (v_3,\, v_4,\, u_4) \in O_1$. However, as $(v_2,\, v_3,\, v_4)\in O_2$, the other $5$-cycle going through it is $(w_2,\, v_2,\, v_3,\, v_4,\, u_4)$, contradicting the fact that no $5$-cycle contains more than one $2$-arc from $O_1$. 

We are left with the possibility that all $2$-arcs of $\vec{\Gamma}$ lie on two $5$-cycles. Again, let $C$ and $N[C]$ be as in Agreement~\ref{okolicaC}. Lemma~\ref{basicProperties}\ref{ii} implies that every $3$-arc of $\vec{\Gamma}$ lies on precisely one $5$-cycle. Considering the $3$-arcs $(w_i,\, v_i,\, v_{i+1},\, v_{i+2})$ for $i\in\mathbb{Z}_5$, Lemma \ref{basicProperties}\ref{ii} thus forces $(u_{i+2},\, w_{i})\in A(\vec{\Gamma})$ for all $i\in \mathbb{Z}_5$. The $3$-arc $(w_0,\, v_0,\, v_1,\, u_1)$ also lies on a $5$-cycle, and so there exists a vertex $x$ such that $(u_1,\, x),\, (x,\, w_0)\in A(\vec{\Gamma} )$. Since $\Gamma$ is of girth $5$, this $x$ cannot be any of $u_i,\, w_i$ with $i\ne 3$, while as there are no undirected $5$-cycles, $x$ is none of $u_3$ and $w_3$. Thus, $x$ is outside $N[C]$. In a similar way the fact that $(u_1,\, w_4,\, v_4,\, u_4)$ and $(u_4,\, w_2,\, v_2,\, u_2)$ are $3$-arcs of $\vec{\Gamma}$ implies that there are distinct $y,\, z$ outside $N(C)$ such that $(u_4,\, y),\, (y,\, u_1),\, (u_2,\, z),\, (z,\, u_4)\in A(\vec{\Gamma} )$ (see Figure \ref{figure3}). As the $3$-arc $(u_4,\, y,\, u_1,\, x)$ of $\vG$ must also lie on a $5$-cycle and $(z,\, u_4),\, (v_4,\, u_4)\in A(\vec{\Gamma} )$, we see that $(x,\, z)\in A(\vec{\Gamma})$ holds. But this forces the $4$-cycle $(x,\, z,\, u_2,\, w_0)$, a contradiction. Therefore, a $2$-arc of $\vec{\Gamma}$ lies on at most one $5$-cycle, as claimed. 
\begin{figure}[h]
    \centering
\includegraphics[height=4 cm]{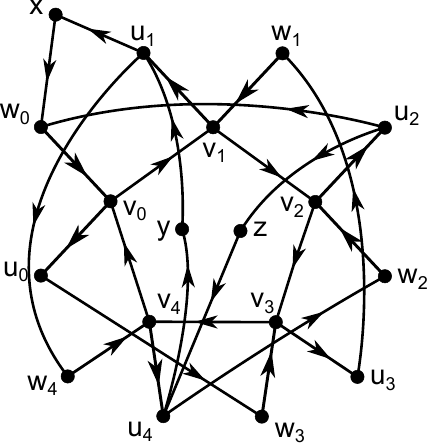}
    \caption{The situation in the proof of Proposition \ref{2arc5c} .}%
    \label{figure3}%
\end{figure}
\end{proof}

\begin{corollary}\label{stab4}
Let $\Gamma$ be a tetravalent graph of girth $5$ admitting a half-arc-transitive subgroup $G\leq\text{Aut}(\Gamma )$ such that a corresponding $G$-induced orientation $\vec{\Gamma}$ gives rise to directed $5$-cycles. Then each edge of $\G$ lies on at most two $5$-cycles. Moreover, $|G_v| \leq 4$ holds for each $v \in V(\G)$ and if $|G_v| = 4$ then each edge of $\G$ lies on two $5$-cycles of $\G$.
\end{corollary}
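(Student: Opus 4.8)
The plan is to establish the three assertions in turn, the bound $|G_v|\le 4$ being the substantial one; the other two follow quickly from Proposition~\ref{2arc5c}. Throughout I use that the orientation $\vG$ is $G$-invariant, so that $G_v$ preserves the partition of the neighbourhood $N(v)$ into the two in-neighbours and the two out-neighbours of $v$.

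For the first assertion, fix an edge $\{u,v\}$ and say $u\to v$ in $\vG$. Since every $5$-cycle of $\G$ is directed, any $5$-cycle through this edge traverses the arc $(u,v)$, so the vertex following $v$ on it is one of the two out-neighbours $b_1,b_2$ of $v$. Hence such a $5$-cycle contains one of the $2$-arcs $(u,v,b_1)$ or $(u,v,b_2)$, and by Proposition~\ref{2arc5c} each of these lies on at most one $5$-cycle. As the two $2$-arcs differ in their terminal vertex they determine distinct $5$-cycles, so $\{u,v\}$ lies on at most two $5$-cycles. For the third assertion, suppose $|G_v|=4$. Then $|G_v|>2$, so by Lemma~\ref{stabilizerSize} the group $G$ is transitive on the $2$-arcs of $\vG$; since $\G$ has girth $5$ it has a $5$-cycle, whence every $2$-arc of $\vG$ lies on a (by Proposition~\ref{2arc5c} unique) $5$-cycle. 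Applying this to $(u,v,b_1)$ and $(u,v,b_2)$ yields two distinct $5$-cycles through $\{u,v\}$, and combined with the first assertion this forces each edge to lie on exactly two $5$-cycles.

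The heart of the argument is the bound $|G_v|\le 4$, which I would obtain by showing that the action of $G_v$ on $N(v)$ is faithful: since $G_v$ preserves the two-element sets of in- and of out-neighbours, a faithful action embeds $G_v$ into $S_2\times S_2$ and gives $|G_v|\le 4$. It suffices to treat the case $|G_v|>2$, where Lemma~\ref{stabilizerSize} again yields $2$-arc-transitivity and hence that every $2$-arc lies on a unique $5$-cycle. Let $\alpha\in G_v$ fix every vertex of $N(v)$; I claim $\alpha=\mathrm{id}$. The \emph{local step} is: if $\alpha$ fixes a vertex $w$ together with one in-neighbour and one out-neighbour of $w$, then it fixes all four neighbours of $w$, because it preserves each of the two-element sets of in- and out-neighbours and already fixes one member of each. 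To propagate, assume $\alpha$ fixes $w$ and all its neighbours, and let $w'$ be any neighbour of $w$. Choose a $2$-arc with middle vertex $w$ one of whose outer vertices is $w'$; all three of its vertices are fixed by $\alpha$, so the unique $5$-cycle through it is $\alpha$-invariant, and as an automorphism of a $5$-cycle fixing three of its vertices is the identity on that cycle, $\alpha$ fixes all five of its vertices. Reading this $5$-cycle off then exhibits both a fixed in-neighbour and a fixed out-neighbour of $w'$, so the local step gives that $\alpha$ fixes all neighbours of $w'$. Starting from $w=v$ and invoking connectivity of $\G$, it follows that $\alpha$ fixes every vertex, so $\alpha=\mathrm{id}$ and the action on $N(v)$ is faithful.

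The main obstacle is precisely this propagation: one must check that the local step can be iterated consistently, i.e. that each newly fixed vertex again comes equipped with a fixed in-neighbour and a fixed out-neighbour so that the $5$-cycle rigidity can be reapplied. This is exactly where the girth-$5$ hypothesis (uniqueness of the $5$-cycle on a $2$-arc, via Proposition~\ref{2arc5c}) and $2$-arc-transitivity are used; once the local step is in place, the remainder is a routine induction on distance from $v$. Combining the three parts then gives $|G_v|\le 4$ for all $v$, together with the stated consequence in the case $|G_v|=4$.
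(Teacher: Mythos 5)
Your proof is correct, and for the first and third assertions it coincides with the paper's: the bound of two $5$-cycles per edge comes from splitting the $5$-cycles through an arc $(u,v)$ according to which of the two $2$-arcs $(u,v,b_i)$ they contain and invoking Proposition~\ref{2arc5c}, and the case $|G_v|=4$ is handled via Lemma~\ref{stabilizerSize} exactly as in the paper. Where you genuinely diverge is the key bound $|G_v|\le 4$. The paper dispatches it in one sentence: since each $2$-arc lies on at most one $5$-cycle, exactly one of the two forward extensions of a $2$-arc lying on a $5$-cycle is itself on a $5$-cycle, so some $3$-arcs of $\vG$ lie on $5$-cycles and some do not; hence $G$ is not transitive on $3$-arcs, and $|G_v|>4$ is (implicitly) claimed to force $3$-arc-transitivity. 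That implicit step is an unstated analogue of Lemma~\ref{stabilizerSize} one level up, and its justification would itself require a connectivity/propagation argument. You instead prove directly that $G_v$ acts faithfully on $N(v)$: once $|G_v|>2$ gives $2$-arc-transitivity and hence a unique $5$-cycle through every $2$-arc, the rigidity of a directed $5$-cycle under an automorphism fixing three consecutive vertices lets you propagate pointwise fixedness of closed neighbourhoods along edges, and connectivity finishes the job, embedding $G_v$ into $S_2\times S_2$. Your route is longer but fully self-contained and makes explicit the propagation that the paper's one-liner conceals; the paper's route is shorter but leans on a $3$-arc-transitivity criterion it never proves. Both arguments ultimately exploit the same phenomenon, namely that uniqueness of the $5$-cycle through each $2$-arc rigidifies arc stabilizers.
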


\begin{proof}
Since all $5$-cycles of $\G$ are directed, Proposition~\ref{2arc5c} implies that there can be at most two $5$-cycles through any given edge of $\G$. Since this also implies that some $3$-arcs of $\vec{\Gamma}$ do not lie on $5$-cycles, we also see that $|G_v| \leq 4$ for each $v \in V(\G)$. The last claim follows from Lemma~\ref{stabilizerSize}. 
\end{proof}

As was mentioned in the Introduction, a half-arc-transitive subgroup $G \leq \Aut(\G)$ for a tetravalent graph $\G$ has two orbits of $G$-consistent cycles. While in principle it can happen that the girth cycles of $\G$ are not $G$-consistent (like for instance in the case of the Doyle-Holt graph) we now show that at least in the case of girth $5$ graphs the only two examples in which the girth cycles are not $G$-consistent are the two graphs with undirected $5$-cycles (the Doyle-Holt graph $\mathcal{X}_o (3,\,9;\,4)$ and the Rose window graph $\mathrm{R}_{12}(5,2)$).

\begin{proposition}\label{consistent}
Let $\Gamma$ be a tetravalent graph of girth $5$ admitting a half-arc-transitive subgroup $G\leq\text{Aut}(\Gamma)$ and suppose $\G$ is not the Doyle-Holt graph $\mathcal{X}_o (3,\,9;\,4)$ or the Rose window graph $\mathrm{R}_{12}(5,2)$. Then the $5$-cycles of $\G$ are all $G$-consistent. 
\end{proposition}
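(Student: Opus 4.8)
The plan is to fix an arbitrary $5$-cycle $C=(v_0,v_1,v_2,v_3,v_4)$ of $\G$ and exhibit an explicit shunt for it. Since $\G$ is neither the Doyle--Holt graph nor $\mathrm{R}_{12}(5,2)$, Theorem~\ref{neusmerjeni} (and the comments preceding this proposition) guarantees that $C$ is \emph{directed}, so after relabelling we may assume $v_0\to v_1\to v_2\to v_3\to v_4\to v_0$ in $\vG$. The crucial consequence is that every consecutive triple $A_i=(v_i,v_{i+1},v_{i+2})$, with indices read modulo $5$, is a genuine $2$-arc of $\vG$, and each $A_i$ lies on $C$. The guiding observation is that a shunt of $C$ is precisely an element of $G$ carrying some $A_i$ to the next $2$-arc $A_{i+1}$.

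First I would verify that such an element is indeed a shunt. Suppose $g\in G$ satisfies $A_i^{\,g}=A_{i+1}$. Then $v_i^{\,g}=v_{i+1}$, $v_{i+1}^{\,g}=v_{i+2}$ and $v_{i+2}^{\,g}=v_{i+3}$, so the $5$-cycle $C^g$ contains the $2$-arc $A_{i+1}$. By Proposition~\ref{2arc5c} a $2$-arc of $\vG$ lies on at most one $5$-cycle, and since $C$ is a $5$-cycle through $A_{i+1}$ we conclude $C^g=C$. An automorphism of the $5$-cycle $C$ sending $v_i$ to $v_{i+1}$ can only be the $1$-step rotation, so $g$ is a shunt and $C$ is $G$-consistent.

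It therefore remains to locate an index $i\in\ZZ_5$ for which $A_i$ and $A_{i+1}$ lie in the same $G$-orbit of $2$-arcs. Here I would appeal to Lemma~\ref{stabilizerSize}: either $|G_v|>2$ for all $v$, in which case $G$ is transitive on all $2$-arcs of $\vG$ and any two of the $A_i$ already share an orbit; or $|G_v|=2$ for all $v$, in which case there are exactly two $G$-orbits of $2$-arcs. In the latter case I would colour each $A_i$ by the orbit containing it, obtaining a colouring of the cyclic sequence $A_0,A_1,\dots,A_4$ by at most two colours.

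The decisive step, and the only genuinely nonroutine point, is a parity argument: an odd cycle admits no proper $2$-colouring. If $A_i$ and $A_{i+1}$ lay in distinct orbits for \emph{every} $i\in\ZZ_5$, then $i\mapsto(\text{orbit of }A_i)$ would be a proper $2$-colouring of a cycle of length $5$, which is impossible. Hence some cyclically consecutive pair $A_i,A_{i+1}$ shares an orbit, producing the required $g$ and, by the reduction above, a shunt of $C$. Since $C$ was an arbitrary $5$-cycle, all $5$-cycles of $\G$ are $G$-consistent. The subtle ingredient worth flagging is that the directedness of $C$ is exactly what makes all five triples into bona fide $2$-arcs, so that the two-orbit dichotomy of Lemma~\ref{stabilizerSize} applies uniformly and the odd-length obstruction can be exploited.
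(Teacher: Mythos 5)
Your proof is correct and follows essentially the same route as the paper: the key step in both is that, since the directed $5$-cycle $C$ has odd length, two consecutive $2$-arcs of $C$ must lie in the same $G$-orbit, and an element of $G$ mapping one to the next fixes $C$ setwise (via Proposition~\ref{2arc5c}) and is therefore a shunt. The only cosmetic difference is that the paper splits into cases according to whether each edge lies on one or two $5$-cycles, while you split according to $|G_v|$; both reductions end in the same parity argument.
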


\begin{proof}
Theorem~\ref{neusmerjeni} implies that the $5$-cycles of $\G$ are all directed (with respect to a $G$-induced orientation $\vG$). Let $C = (v_0,\, v_1,\, v_2,\, v_3,\, v_4)$ be a $5$-cycle of $\G$. By Corollary~\ref{stab4} each edge of $\G$ lies on at most two $5$-cycles of $\G$. If each edge lies on just one, then an element of $G$ taking $(v_0,\, v_1)$ to $(v_1,\, v_2)$ must be a shunt of $C$ (as $C$ is the unique $5$-cycle through $(v_1,\, v_2)$), showing that $C$ is $G$-consistent. Suppose thus that each edge of $\G$ lies on two $5$-cycles. Then Proposition~\ref{2arc5c} implies that every $2$-arc of $\vec{\Gamma}$ lies on precisely one $5$-cycle. As $C$ is of odd length, there exist two consecutive $2$-arcs of $C$ from the same $G$-orbit of $2$-arcs of $\vG$. But then an element of $G$ mapping the first to the second preserves $C$ and is thus its shunt, again showing that $C$ is $G$-consistent. 
\end{proof}

By Corollary~\ref{stab4} each edge of a tetravalent graph $\G$ of girth $5$ admitting a half-arc-transitive group action can lie on at most two directed $5$-cycles. We now show that if $\G$ is a Cayley graph it in fact cannot lie on two (recall that the {\em Cayley graph} $\text{Cay}(H,\, S)$ of the group $H$ with respect to its subset $S$, where $S = S^{-1}$ and $1 \notin S$, is the graph with vertex set $H$ in which each $h \in H$ is adjacent to all elements of the form $sh$, $s \in S$).  

\begin{proposition}\label{cay}
Let $H = \langle a, b \rangle$ be a group such that $S = \{a,\, a^{-1},\, b,\, b^{-1}\}$ is of cardinality $4$. Suppose that $\Gamma = \text{Cay}(H,\, S)$ is of girth $5$ and admits a half-arc-transitive subgroup $G\leq\text{Aut}(\Gamma )$. Then either $\Gamma$ is one of $\mathcal{X}_o(3,\, 9;\, 4)$ and $\mathrm{R}_{12}(5,2)$, or $a$ and $b$ are both of order $5$ and every edge of $\Gamma$ lies on a unique $5$-cycle of $\Gamma$.  
\end{proposition}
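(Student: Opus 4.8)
The plan is to peel off the two exceptional graphs and then force the desired conclusion on everything that remains. By Theorem~\ref{neusmerjeni} I may assume $\G$ is neither $\mathcal{X}_o(3,\,9;\,4)$ nor $\mathrm{R}_{12}(5,2)$, so all $5$-cycles are directed with respect to $\vG$, and by Proposition~\ref{consistent} each of them is $G$-consistent. Two cheap observations set the stage. First, for $h\in H$ the right coset $\langle a\rangle h$ induces a cycle of length $\mathrm{ord}(a)$ in $\G$, and likewise for $b$; since $|S|=4$ excludes $\mathrm{ord}(a),\mathrm{ord}(b)\in\{1,2\}$ and girth $5$ excludes orders $3$ and $4$, we get $\mathrm{ord}(a),\mathrm{ord}(b)\ge 5$. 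Second, because a half-arc-transitive group is edge-transitive, every edge lies on the same number $k$ of $5$-cycles; since $\G$ has girth $5$ and Corollary~\ref{stab4} bounds this number, $k\in\{1,2\}$. The whole task is to show that (outside the exceptional cases) $k=1$ and that the $5$-cycles are precisely the monochromatic $\langle a\rangle$- and $\langle b\rangle$-pentagons, which is exactly what yields $\mathrm{ord}(a)=\mathrm{ord}(b)=5$ together with uniqueness.

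The natural next step is to read $5$-cycles through the identity as \emph{relations}: a directed $5$-cycle through $1$ corresponds to a cyclically reduced word $s_5s_4s_3s_2s_1=1$ with $s_i\in S$ whose five partial products are distinct. I would classify these by how many letters are of the $a$-colour (in $\{a,a^{-1}\}$) versus the $b$-colour; this colour multiset is invariant under the regular representation $H_R\le\Aut(\G)$ and so is well defined on $H_R$-orbits of $5$-cycles regardless of edge orientations. A reduced cyclic word of colour type $(5,0)$ lies in $\{a,a^{-1}\}$ and must be all-$a$ or all-$a^{-1}$, giving $a^{\pm5}=1$ and hence $\mathrm{ord}(a)=5$; type $(0,5)$ is symmetric. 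Colour type $(4,1)$ (or $(1,4)$) forces $b\in\langle a\rangle$ (or $a\in\langle b\rangle$), so $H$ is cyclic and $\G$ is a circulant of girth at most $4$, a contradiction. Thus everything reduces to excluding the genuinely mixed types $(3,2)$ and $(2,3)$. A useful preliminary dichotomy comes from the out-neighbour pair $O(v)=\{s\in S\colon v\to sv\}$: for each colour either some vertex is a source or a sink for that colour, which (propagating along the coset-cycle) makes the monochromatic coset-cycles $G$-alternating and $\G$ tightly-attached, whence Theorem~\ref{TA} returns only $\mathcal{X}_o(5,\,r;\,q)$ (the Doyle--Holt graph being already excluded), a graph of the required form; or else every colour contributes exactly one in- and one out-edge at each vertex, which is the regime where the mixed types must be killed by hand.

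The main obstacle is precisely this last step. I would attack it with three interacting tools: Proposition~\ref{2arc5c}, which says each $2$-arc of $\vG$ lies on at most one $5$-cycle; the shunt of a consistent cycle, noting that when $|G_v|=2$ (equivalently $k=1$ by Corollary~\ref{stab4} and Lemma~\ref{stabilizerSize}) the shunt $g$ of a $5$-cycle satisfies $g^5\in G_v$ and hence $g^5=1$, so $g$ has order $5$ and acts semiregularly; and a counting argument against the colour classes, which $H_R$ preserves but $G$ emphatically does not, since $G$ is transitive on edges and therefore interchanges $a$- and $b$-edges. The counting already shows that no single mixed type can dominate: if every $5$-cycle had exactly three $a$-edges, then comparing the incidence sum $\sum_C(\text{number of }a\text{-edges of }C)=k\,|V(\G)|$ with the fact that each of the $|V(\G)|$ many $a$-edges lies on $k$ cycles forces $6k\,|V(\G)|/5=k\,|V(\G)|$, which is absurd. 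The delicate point is that a consistent mixed cycle's shunt rotates its colour-pattern and so throws both ``straight'' ($2$-arcs repeating a colour) and ``turning'' $2$-arcs into a single $G$-orbit; one must use this scrambling, together with the $H_R$-invariance of the colouring and the at-most-one-$5$-cycle-per-$2$-arc constraint, to rule out the turning $2$-arcs from ever lying on a $5$-cycle, and simultaneously to exclude the two-$5$-cycle case $k=2$ (where $G$ is transitive on $2$-arcs). Once only monochromatic pentagons survive, edge-transitivity places every edge on a unique pentagon, giving $\mathrm{ord}(a)=\mathrm{ord}(b)=5$ and $k=1$, which completes the proof. I expect the tension between the $G$-symmetry that mixes the colours and the $H_R$-symmetry that respects them to be the genuinely delicate part requiring the most care.
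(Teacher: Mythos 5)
Your reduction to directed $5$-cycles via Theorem~\ref{neusmerjeni}, the reading of $5$-cycles through $1$ as cyclically reduced relations $s_4s_3s_2s_1s_0=1$, and the disposal of the colour types $(5,0)$, $(0,5)$, $(4,1)$ and $(1,4)$ are all fine and are in the spirit of the paper's argument. The problem is that essentially the whole content of the proposition lives in the case you defer: the mixed types $(3,2)$ and $(2,3)$. What you offer there is a list of tools and an expectation that they can be combined (``one must use this scrambling \dots to rule out the turning $2$-arcs''), not an argument; and your counting only excludes the scenario in which \emph{every} $5$-cycle has exactly three $a$-edges, which says nothing about a single mixed relation coexisting with monochromatic ones. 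The paper closes this case by a short explicit elimination: the reduced cyclic word has odd length, so some consecutive pair repeats, say $s_0=s_1=a$; if some $s_i\neq a$, then Proposition~\ref{2arc5c} (each $2$-arc lies on at most one $5$-cycle) forces $s_2,s_4\in\{b,b^{-1}\}$ and Corollary~\ref{stab4} gives exactly two $5$-cycles through the arc $(1,a)$; next, $s_3\in\{b,b^{-1}\}$ would put two $5$-cycles on a $2$-arc of colour $b$, while $s_3=a^{-1}$, and later $s_4=s_2^{-1}$, would each produce a third $5$-cycle through $(1,a)$ (the translate containing $(1,a,s_2^{-1}a)$); the only surviving word $a\,a\,s_2\,a\,s_2$ then puts two $5$-cycles on the $2$-arc $(1,s_2,as_2)$, a contradiction. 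Some such concrete elimination is exactly what your write-up is missing, so as it stands the proof is incomplete at its central step.

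Two secondary flaws. First, your ``preliminary dichotomy'' asserts that if some vertex is a source or sink for one colour then the monochromatic coset-cycles are $G$-alternating and $\G$ is tightly-attached; neither implication is justified. A single colour-source does not propagate along the $\langle a\rangle$-coset cycle (the $G$-action need not respect colours, so nothing forces the next vertex on that coset cycle to be a colour-sink), and even if those coset cycles were alternating cycles this would not by itself yield tight attachment. The paper needs no such dichotomy. Second, $|G_v|=2$ is not equivalent to each edge lying on one $5$-cycle: Corollary~\ref{stab4} gives only the implication from $|G_v|=4$, and Table~\ref{tab:nHAT2} records graphs with stabilizers of order $2$ and two $5$-cycles through each edge.
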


\begin{proof}
Let $\vec{\Gamma}$ be one of the two $G$-induced orientations of $\Gamma$. By Theorem~\ref{neusmerjeni} it suffices to assume that all $5$-cycles of $\G$ are directed with respect to $\vec{\Gamma}$ and then prove that $a$ and $b$ are of order $5$ and that every edge of $\Gamma$ lies on a unique $5$-cycle of $\G$. 

Since $\Gamma$ is a vertex-transitive graph of girth $5$, every vertex lies on a $5$-cycle. There thus exists a sequence $[s_4,\, s_3,\, s_2, \, s_1,\, s_0]$ with $s_i\in S$ and $s_{i+1}s_i\neq 1$ for all $i\in \mathbb{Z}_5$ (indices computed modulo $5$) such that $s_4s_3s_2s_1s_0=1$. Therefore, $(h,\, s_0h,\, s_1s_0h,\, s_2s_1s_0h,\, s_3s_2s_1s_0h)$ is a $5$-cycle for each $h\in H$. Since $ s_{i+1} \neq s_i^{-1}$ for each $i\in \mathbb{Z}_5$ and the sequence is of odd length, we must have that $s_{i+1}=s_i$ for at least one $i\in \mathbb{Z}_5$. We can assume that this holds for $i=0$ and that in fact $s_0=s_{1} = a$. 

Now, suppose that at least one of $s_i$, $i\in \mathbb{Z}_5$, is not $a$. Since by Proposition~\ref{2arc5c} the $2$-arc $(1,\, a,\, a^2)$ lies on at most one $5$-cycle of $\G$ it thus follows that $s_2,\, s_4\in \{b,\, b^{-1}\}$. Corollary~\ref{stab4} thus implies that there are precisely two $5$-cycles through $(1,\, a)$ (one containing $a^2$ and one $s_2a$). Note also that $s_3\in \{b,\, b^{-1}\}$ would imply $s_2=s_3=s_4$, contradicting the fact that the $2$-arc $(1,\, b,\, b^2)$ also lies on at most one $5$-cycle. Therefore, $s_3\in \{a,\, a^{-1}\}$. If $s_3=a^{-1}$, then $(1,\, a,\, s^{-1}_2a)$ also lies on a $5$-cycle (note that $s^{-1}_0 s^{-1}_1 s^{-1}_2 s^{-1}_3 s^{-1}_4=1$), contradicting the fact that there are just two $5$-cycles through $(1,\, a)$. Thus $s_3 = a$, and so $s_2=s_4$ must hold (otherwise we again get a $5$-cycle through $(1,\,a,\,s_2^{-1}a)$). But then the $2$-arc $(1,\, s_2,\, as_2)$ lies on two $5$-cycles, contradicting Proposition~\ref{2arc5c}. Thus $a^5=1$ (and similarly $b^5=1$) holds and each edge is contained in a unique $5$-cycle of $\Gamma$, as claimed.  
\end{proof}

We already know that there are infinitely many examples of tetravalent graphs of girth $5$ admitting a half-arc-transitive group action such that each edge of the graph lies on a unique $5$-cycle. Namely, the argument from Section~\ref{sec:TA} shows that each $\mathcal{X}_o(5,r;q)$ with $r$ odd and $q \in \ZZ_r$ such that $q \neq \pm 1$ and $1+q + q^2 + q^3 + q^4 = 0$ has this property. This also follows from Proposition~\ref{cay} and the results of~\cite{MTAO}, where one can see that the subgroup $\langle \rho, \sigma \rangle$, where $\rho$ and $\sigma$ are as in~\cite{MTAO}, is a regular group, and so $\mathcal{X}_o$ graphs are Cayley graphs.  

It turns out that there are also infinitely many examples of tetravalent graphs of girth $5$ admitting a half-arc-transitive group action such that each edge of the graph lies on two $5$-cycles, thus showing that Corollary~\ref{stab4} is best possible. We construct them as certain double coset graphs with respect to the groups $\PSL(2,p)$, where $p$ is a prime with $p \equiv 1 \pmod{10}$. Recall that $\PSL(2,p) = \mathrm{SL}(2,p)/\mathrm{Z}$, where $\mathrm{Z} = \{I, -I\}$ and $I$ is the identity matrix, and so we can view the elements of $\PSL(2,p)$ as matrices from $\mathrm{SL}(2,p)$ with the understanding that for each $A \in \mathrm{SL}(2,p)$ the matrices $A$ and $-A$ represent the same element of $\PSL(2,p)$. We start with a preparatory lemma.

\begin{lema}\label{matrike}
Let $p$ be a prime with $p \equiv 1 \pmod{10}$, let $\xi \in \ZZ_p$ be a fifth root of unity and let 
$$
	A = \begin{pmatrix} 0 & 1 \\ -1 & 0 \end{pmatrix}\quad \text{and}\quad B=\begin{pmatrix} \xi & \xi+\xi^{-1} \\ 0 & \xi^{-1} \end{pmatrix}
$$
be viewed as elements of $\PSL(2,p)$. Then $A$ is an involution, $B$ and $AB$ are both of order $5$ and $\langle A, B \rangle = \PSL(2,p)$.
\end{lema}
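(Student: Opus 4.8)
The plan is to dispatch the order statements by elementary matrix computations and to establish generation via Dickson's classification of the subgroups of $\PSL(2,p)$, the delicate point being the exclusion of $A_5$.

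First I would verify the orders. A direct computation gives $A^2=-I$, which is the identity of $\PSL(2,p)$, so $A$ is an involution (it is noncentral as $A\neq\pm I$). Since $B$ is upper triangular with the two distinct eigenvalues $\xi$ and $\xi^{-1}$ (distinct because $\xi$ is a primitive fifth root of unity, hence not $\pm1$), it is diagonalizable, $B^5$ has both eigenvalues equal to $\xi^{\pm5}=1$ so $B^5=I$, while $B^k\neq\pm I$ for $0<k<5$; thus $B$ has order $5$. For $AB$ the key observation is that $\mathrm{tr}(AB)=-(\xi+\xi^{-1})$ and $\det(AB)=1$, so its characteristic polynomial factors as $(\lambda+\xi)(\lambda+\xi^{-1})$, giving eigenvalues $-\xi$ and $-\xi^{-1}$. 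Since $-\xi$ has multiplicative order $10$, we get $(AB)^5=-I=I$ in $\PSL(2,p)$ with no smaller power equal to $\pm I$, so $AB$ also has order $5$.

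For the generation claim I would put $H=\langle A,B\rangle$ and invoke Dickson's classification. As $p$ is prime there are no proper subfield subgroups, so $H$ is either all of $\PSL(2,p)$ or lies in a Borel subgroup, a dihedral group, or one of $A_4$, $S_4$, $A_5$. A short computation shows $AB\neq BA$, so $H$ is nonabelian, excluding the cyclic case. It cannot be dihedral: the two order-$5$ elements $B$ and $AB$ would both lie in the cyclic rotation subgroup, forcing $A=B^{-1}(AB)$ there as well and making $H$ abelian. The groups $A_4$ and $S_4$ have no element of order $5$, whereas $B$ does, so they are excluded. Finally, one checks that neither of the two eigenlines of $B$ is $A$-invariant (for instance $[1:0]$ is sent by $A$ to $[0:1]$, which is a $B$-eigenline only if $\xi+\xi^{-1}=0$), so $H$ has no invariant line and acts irreducibly on $\mathbb{F}_p^2$, hence is not contained in any Borel subgroup.

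The main obstacle is excluding $A_5$, which genuinely embeds in $\PSL(2,p)$ whenever $p\equiv\pm1\pmod 5$ and whose element orders $\{1,2,3,5\}$ are compatible with our generators, so element orders alone do not suffice. To rule it out I would use that if $H\cong A_5$ then its preimage in $\mathrm{SL}(2,p)$ is $\mathrm{SL}(2,5)$, whose two-dimensional representation forces the trace of every element to be a reduction of a character value, that is, to satisfy $t(t^2-1)(t^2-4)(t^4-3t^2+1)=0$. Writing $s=\xi+\xi^{-1}$, so that $s^2+s-1=0$, trace computations give $\mathrm{tr}(AB^2)=s-1$ and $\mathrm{tr}(ABAB^{-1})=2s$; substituting the relation $s^2=1-s$ into the displayed polynomial shows that each of these can be an $A_5$-trace only when $p=11$, and that for $p=11$ the two witnesses are complementary, so at least one always lies outside the $A_5$-trace set. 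This contradiction excludes $A_5$ and leaves $H=\PSL(2,p)$ as the only possibility. The step that requires the most care is precisely this trace analysis at the exceptional prime, where the coincidences in $\mathbb{F}_{11}$ must be checked for both choices of the fifth root $\xi$.
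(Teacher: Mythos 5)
Your proposal is correct, and while it follows the same overall skeleton as the paper (elementary verification of the orders, then Dickson's classification with the two nontrivial exclusions being $A_5$ and the subgroups of a Borel), the two key exclusion steps are carried out by genuinely different arguments. For the Borel/$\ZZ_p\rtimes\ZZ_t$ case the paper works abstractly with the presentation $\langle x,y\mid x^t=y^p=1,\ x^{-1}yx=y^k\rangle$ and derives the incompatible divisibility conditions $t\mid 5i$ and $t\mid 5(i+t/2)$ from the requirement that $B$ and $AB$ both have order $5$; you instead argue geometrically that $\langle A,B\rangle$ acts irreducibly because no eigenline of $B$ is $A$-invariant. Your route is arguably more transparent, but note that you only exhibit the computation for the eigenline $[1:0]$; the second eigenline of $B$, spanned by $(\xi+\xi^{-1},\ \xi^{-1}-\xi)$, should also be checked (it is $A$-invariant only if $\xi+\xi^{-1}=-1$, which contradicts $s^2+s-1=0$), so the claim holds but the verification should be completed. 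For the $A_5$ exclusion the paper uses the group-theoretic fact that in $A_5$ any involution $a$ and order-$5$ element $b$ with $ab$ of order $5$ force $ab^2$ to have order $3$, and then shows by direct computation that $(AB^2)^3\neq\pm I$; you instead lift to $\mathrm{SL}(2,5)$ and use the constraint that all traces must lie in $\{0,\pm1,\pm2\}\cup\{t: t^4-3t^2+1=0\}$, which requires the two witnesses $AB^2$ and $ABAB^{-1}$ to deal with the exceptional prime $p=11$ (where, as you correctly observe, exactly one of the two traces $s-1$ and $2s$ lands in the admissible set for each of the two possible values of $s$). Both approaches are sound; the paper's single-witness order argument is shorter, while your trace argument is more systematic and makes visible exactly where the near-coincidence at $p=11$ occurs. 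Your treatment of the dihedral case (both order-$5$ elements must lie in the cyclic subgroup, forcing $A$ there as well) is in fact cleaner than the paper's reliance on the computation of $ABA$.
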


\begin{proof}
A straightforward calculation shows that $A$ is an involution (in $\PSL(2,p)$) and $B$ is of order $5$ (recall that $\xi$ is of order $5$ in the multiplicative group $\ZZ_p^*$, and so $1+\xi+\xi^2+\xi^3+\xi^4 = 0$). Moreover, one can verify that $(AB)^5 = -I$ (in $\mathrm{SL}(2,p)$), and so $AB$ is also of order $5$ in $\PSL(2,p)$. Letting $G = \langle A, B \rangle$ it thus remains to be verified that $G = \PSL(2,p)$. 
Observe first that 
$$
	ABA = \begin{pmatrix} -\xi^{-1} & 0 \\ \xi + \xi^{-1} & -\xi \end{pmatrix} \notin \langle B \rangle,
$$
showing that $G$ is neither abelian nor dihedral. Since $G$ contains an element of order $5$, the theorem of Dickson on subgroups of the $\PSL$ groups (see~\cite[Hauptsats 8.27]{dickson}) implies that if $G$ is not the whole $\PSL(2,p)$ then it must be isomorphic to the alternating group $A_5$ or to a semidirect product of the form $\ZZ_p \rtimes \ZZ_t$ for a divisor $t$ of $p-1$. 

It can be verified that in $A_5$ for each pair of an involution $a$ and an element $b$ of order $5$ such that $ab$ is also of order $5$ the element $ab^2$ is necessarily of order $3$. Calculating we find that 
$$
	AB^2 = \begin{pmatrix} 0 & \xi^{-2} \\ -\xi^2 & \xi+\xi^{-1}-1 \end{pmatrix} \quad \text{and}\quad
	(AB^2)^3 = \begin{pmatrix} 1-\xi-\xi^{-1} & -\xi^{2}(3-\xi+3\xi^2) \\ \xi(3-\xi+3\xi^2) & -3 + 6\xi + 6\xi^{-1} \end{pmatrix},
$$
and so $AB^2$ is not of order $3$ as $1-\xi-\xi^{-1} \in \{-1,1\}$ contradicts the assumption that $\xi$ has multiplicative order $5$. Therefore, $G$ is not isomorphic to $A_5$.

Finally, suppose that $G$ is isomorphic to a semidirect product of the form $\ZZ_p \rtimes \ZZ_t$ for a divisor $t$ of $p-1$. Then $G = \langle x,y \mid x^t = y^p = 1, x^{-1}yx = y^k \rangle$ for some $k \in \ZZ_p$ with $k^t = 1$. Since $G$ contains the involution $A$, $G$ is of even order, and so $t$ is even. Moreover, as $y$ is of odd order, $A$ is of the form $x^{t/2}y^j$, where $j \in \ZZ_p$. But as $B = x^iy^\ell$ for some $i \in \ZZ_t$, $i \neq 0$, and $\ell \in \ZZ_p$, we see that $B$ and $AB$ cannot both be of order $5$ (as this would force $t \mid 5i$ and $t \mid 5(i+t/2)$, which clearly cannot both hold). This finally proves that $G = \PSL(2,p)$, as claimed.
\end{proof}

In the proof of the next result we use a well-known construction of double coset graphs. For self containment we recall the definition. For a group $G$, its subgroup $H$ and a subset $S$ of $G$ the {\em double coset graph} $\mathrm{Cos}(G,H,S)$ of $G$ {\em with respect to} $H$ and $S$ is the graph whose vertex set is the set of all right cosets $Hg$ of $H$ in $G$ and in which the vertices $Hg$ and $Hg'$ are adjacent if and only if $g'g^{-1} \in HSH$. It is well known and easy to see that $G$ has a vertex-transitive action on $\mathrm{Cos}(G,H,S)$ via right multiplication which is faithful if and only if $H$ is core-free in $G$. 

\begin{proposition}
\label{pro:PSL}
Let $p$ be a prime with $p \equiv 1 \pmod{10}$. Then there exists a connected tetravalent arc-transitive graph of girth $5$ and order $p(p^2-1)/4$ admitting the group $\PSL(2,p)$ as a half-arc-transitive subgroup of automorphisms and each edge of this graph lies on two $5$-cycles.
\end{proposition}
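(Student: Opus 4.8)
The plan is to realize the desired graph as a double coset graph over $G=\PSL(2,p)$ using the involution $A$ and the order-$5$ element $B$ supplied by Lemma~\ref{matrike}. Concretely, set $H=\langle A\rangle\cong\ZZ_2$ and $S=\{B,B^{-1}\}$ and let $\G=\mathrm{Cos}(G,H,S)$. Since $\langle A,B\rangle=G$, the set $H\cup S$ generates $G$, so $\G$ is connected, and as $|H|=2$ the number of vertices is $|G|/2=p(p^2-1)/4$, as required. The first thing I would check is that $\G$ is tetravalent: the neighbours of the base vertex $H$ are the four cosets $HB,HBA,HB^{-1},HB^{-1}A$, and these are pairwise distinct because any coincidence would force one of $B^2$, $BAB$ or $BAB^{-1}$ into $H$, each of which contradicts $A$ being an involution with $B$ and $AB$ of order $5$ and $\langle A,B\rangle$ being neither abelian nor dihedral (as recorded in the proof of Lemma~\ref{matrike}).

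Next I would establish that $G$ (acting by right multiplication) is half-arc-transitive. This action is vertex-transitive with $G_H=H\cong\ZZ_2$, and its kernel is trivial since $\PSL(2,p)$ is simple. Declaring the out-neighbours of $Hx$ to be $HBx$ and $HBAx$ defines a proper $G$-invariant orientation $\vG$: right translation by $A\in G_H$ interchanges the two out-neighbours $HB\leftrightarrow HBA$ (and the two in-neighbours), so $G$ is transitive on the arcs of $\vG$, hence edge-transitive on $\G$; and because $G$ preserves the proper orientation $\vG$, it cannot reverse any arc, so it is not arc-transitive. Thus $G$ is half-arc-transitive on $\G$, and $\vG$ is one of its two induced orientations.

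The core of the argument is the girth. Using $B^5=1$ and $(AB)^{5}=1$ I would exhibit two $5$-cycles through the edge $\{H,HB\}$: the ``$B$-pentagon'' $(H,HB^{-1},HB^{-2},HB^{-3},HB^{-4})$ with $HB^{-4}=HB$, and the ``$AB$-pentagon'' $(H,H(AB)^{-1},H(AB)^{-2},H(AB)^{-3},H(AB)^{-4})$ with $H(AB)^{-4}=HAB=HB$; here consecutive cosets differ by $B^{-1}$, respectively by $(AB)^{-1}=B^{-1}A\in HB^{-1}H$, so both are genuine cycles sharing exactly the edge $\{H,HB\}$. This gives girth at most $5$ and at least two $5$-cycles per edge. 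The step I expect to be the main obstacle is showing there are no $3$- or $4$-cycles, uniformly in $p$. By vertex-transitivity it suffices to rule out short cycles through $H$, which reduces to checking that no two of the four neighbours of $H$ are adjacent and that no two of them have a common neighbour other than $H$; each such check amounts to verifying that a short word in $A,B^{\pm1}$ does not lie in $H$, and I would carry these out with the explicit matrices, the non-degeneracy coming from $\xi$ having multiplicative order exactly $5$ (so that $1+\xi+\xi^2+\xi^3+\xi^4=0$ and the relevant polynomial expressions in $\xi$ are nonzero), in the same spirit as the computations in Lemma~\ref{matrike}. Once girth $5$ is established, $\G$ has order at least $330$ and so is neither $\mathcal{X}_o(3,9;4)$ nor $\mathrm{R}_{12}(5,2)$; Theorem~\ref{neusmerjeni} then forces all its $5$-cycles to be directed, and Corollary~\ref{stab4} caps the number of $5$-cycles through an edge at two, so each edge lies on exactly the two pentagons found above.

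Finally, to see that $\G$ itself is arc-transitive I would produce a single automorphism reversing $\vG$. Conjugation $\phi\colon X\mapsto t^{-1}Xt$ by $t=\begin{pmatrix}1&-1\\1&1\end{pmatrix}\in\mathrm{GL}(2,p)$ is an automorphism of $G$ fixing $A$ and sending $B$ to $B^{-1}A=(AB)^{-1}$ (a direct matrix check, using only that $\xi+\xi^{-1}\neq 0$ and $\xi\neq\xi^{-1}$). Since $\phi(H)=H$ and $\phi(S)=\{B^{-1}A,AB\}$ satisfies $H\phi(S)H=HSH$, the map $Hx\mapsto H\phi(x)$ is a graph automorphism $\bar\phi$ of $\G$; as $\bar\phi$ sends the out-neighbour $HB$ of $H$ to the in-neighbour $HB^{-1}A$, it reverses $\vG$. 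Hence $\langle G,\bar\phi\rangle$ is transitive on all arcs of $\G$, so $\G$ is a connected tetravalent arc-transitive graph of girth $5$ and order $p(p^2-1)/4$ admitting $\PSL(2,p)$ as a half-arc-transitive subgroup with two $5$-cycles through every edge, as claimed.
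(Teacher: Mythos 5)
Your construction is the same as the paper's: the double coset graph $\mathrm{Cos}(\PSL(2,p),\langle A\rangle,\{B,B^{-1}\})$ with $A,B$ from Lemma~\ref{matrike}, the same verification of connectedness, order, tetravalence and half-arc-transitivity, the same two pentagons through $\{H,HB\}$ coming from $B$ and $AB$, and the same strategy of exhibiting an orientation-reversing element of $\mathrm{PGL}(2,p)$ acting by conjugation. The pieces you actually work out are correct; in particular I checked that conjugation by $t=\bigl(\begin{smallmatrix}1&-1\\1&1\end{smallmatrix}\bigr)$ does fix $A$ and send $B$ to $B^{-1}A$, so your arc-transitivity certificate is valid and is even a little cleaner than the paper's (your $t$ is independent of $\xi$, whereas the paper's matrix $C$ depends on it). Your appeal to Theorem~\ref{neusmerjeni} and Corollary~\ref{stab4} to cap the count at exactly two $5$-cycles per edge is also fine.

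The one genuine shortfall is the girth-$5$ verification, which you correctly identify as the main obstacle but then leave entirely as a plan (``I would carry these out with the explicit matrices''). Ruling out all $3$- and $4$-cycles through $H$ by hand means checking that a fairly long list of words of length up to about six in $A,B^{\pm1}$ avoids $H=\{1,A\}$, and the paper's own computations show that some of these cases genuinely hinge on $1+\xi+\xi^2+\xi^3+\xi^4=0$ and on $\xi$ having order exactly $5$, so they cannot all be dispatched by pure relation-chasing; as written, the crux of the proposition is asserted rather than proved. The paper avoids the brute force by a structural shortcut worth noting: $3$-cycles are excluded because they would have to be directed and both $G_H$-orbit representatives of $2$-arcs already lie on consistent $5$-cycles; then a single matrix computation shows the $G$-alternating cycles are not $4$-cycles, the classification of tetravalent girth-$4$ graphs with half-arc-transitive actions from~\cite{HTG4} reduces the remaining danger to directed $4$-cycles, and one further short computation ($Hb^2ab\neq Hb^{-1}$) kills those. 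Either route works, but to be a complete proof yours still needs the finite case analysis actually carried out.
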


\begin{proof}
Let $G = \PSL(2,p)$, let $A, B \in G$ be as in Lemma~\ref{matrike} and let $H = \langle A \rangle$. By Lemma~\ref{matrike} we have that $G = \langle A, B\rangle$, and so the double coset graph $\G = \mathrm{Cos}(G,H,\{B, B^{-1}\})$ is a connected graph of order $|G|/2 = p(p^2-1)/4$. As explained in the paragraph preceding this proposition $G$ has a faithful vertex-transitive action on $\G$. For better readability of the remainder of this proof we write $a$ and $b$ instead of $A$ and $B$, respectively. 

Observe first that the neighbors of the vertex $H$ are $Hb$, $Hb^{-1}$, $Hba$ and $Hb^{-1}a$. Since $a$ is an involution, $a$ and $b$ do not commute, and $ab$ and $b$ are both of order $5$ (see Lemma~\ref{matrike}), these four cosets are pairwise distinct, and so $\G$ is tetravalent. In addition, right multiplication by $a$ fixes $H$ and interchanges $Hb$ with $Hba$ and at the same time $Hb^{-1}$ with $Hb^{-1}a$, while right multiplication by $b$ maps the edge $\{Hb^{-1},H\}$ to the edge $\{H,Hb\}$. This shows that the action of $G$ on $\G$ is edge-transitive and is thus in fact half-arc-transitive (since the order of $G$ is twice the order of $\G$ it cannot be arc-transitive). Choose the $G$-induced orientation $\vG$ of $\G$ in which $Hb$ and $Hba$ are out-neighbors and $Hb^{-1}$ and $Hb^{-1}a$ are in-neighbors of $H$. Since right multiplication by $b$ is a shunt of a $5$-cycle through $(Hb^{-1},H,Hb)$ while right multiplication by $ab$ is a shunt of a $5$-cycle through $(Hb^{-1}a,H,Hb)$, there are thus two distinct directed $5$-cycles (with respect to $\vG$ - henceforth we simply speak of directed cycles) through the edge $\{H,Hb\}$.

We next prove that $\G$ is in fact of girth $5$. Since a $3$-cycle of $\G$ can only be a directed cycle (recall that no element of $G$ can swap a pair of adjacent vertices) and each of the $2$-arcs $(Hb^{-1},H,Hb), (Hb^{-1}a,H,Hb)$ of $\vG$ lies on a $G$-consistent $5$-cycle, it is clear that there are no $3$-cycles in $\G$. Since $(Hb^{-1}aba,Hba,H,Hb,Hb^{-1}ab)$ is a part of a $G$-alternating cycle, the $G$-alternating cycles are of length $4$ if and only if $Hb^{-1}aba = Hb^{-1}ab$, which holds if and only if $b^{-1}ab$ and $a$ commute. A straightforward calculation shows that 
$$
	b^{-1}aba = \begin{pmatrix} -2-\xi^2-2\xi^3 & 1+\xi^{2} \\ 1+\xi^2 & -\xi^{2} \end{pmatrix} \ \text{and}\ 
	\begin{pmatrix} -\xi^2 & -1-\xi^{2} \\ -1-\xi^2 & -2-\xi^2-2\xi^3 \end{pmatrix} = ab^{-1}ab.
$$
Note that $-1-\xi^2 \neq 1 + \xi^2$ (since $p$ is odd and $\xi^2 \neq -1$), and so the comment from the paragraph preceding Lemma 6.5 implies that these two elements of $\mathrm{PSL(2,p)}$ can only be equal if $-2-\xi^2-2\xi^3 = -(-\xi^2) = \xi^2$. But since $1+\xi+\xi^2+\xi^3+\xi^4 = 0$ and $p$ is odd, this would force $\xi = -\xi^{-1}$, contradicting the fact that $\xi$ is of multipli\-ca\-tive order $5$. Therefore, the $G$-alternating cycles of $\G$ are not $4$-cycles. The results of~\cite{HTG4} (in particular, the proof of~\cite[Theorem~4.1]{HTG4}) thus reveal that the only way $\G$ can be of girth $4$ is if it has directed $4$-cycles (it is easy to see that the so called ``exceptional'' graphs from~\cite{HTG4} are either bipartite or can only have directed $5$-cycles if they are of order at most $25$). Suppose there is a directed $4$-cycle through the edge $\{H,Hb\}$. It can share at most two consecutive edges with any of the above mentioned $5$-cycles through this edge given by the shunts $b$ and $ab$, and so one of $Hbab \to Hb^{-1}$ and $Hb^2 \to Hb^{-1}a$ must hold (see Figure~\ref{fig:PSL}). In the latter case the $2$-arc $(H,Hb,Hb^2)$ lies on a directed $4$-cycle, and so applying $b^{-1}$ we see that $(Hb^{-1},H,Hb)$ also lies on a directed $4$-cycle, showing that $Hbab \to Hb^{-1}$ does hold in any case. But then $Hb^2ab = Hb^{-1}$, which is impossible as $b^2ab^2 = 1$ yields $a = b$, while $b^2ab^2 = a$ yields $aba = b^{-1}$, contradicting Lemma~\ref{matrike}.
\begin{figure}[h]
    \centering
\includegraphics[height=4 cm]{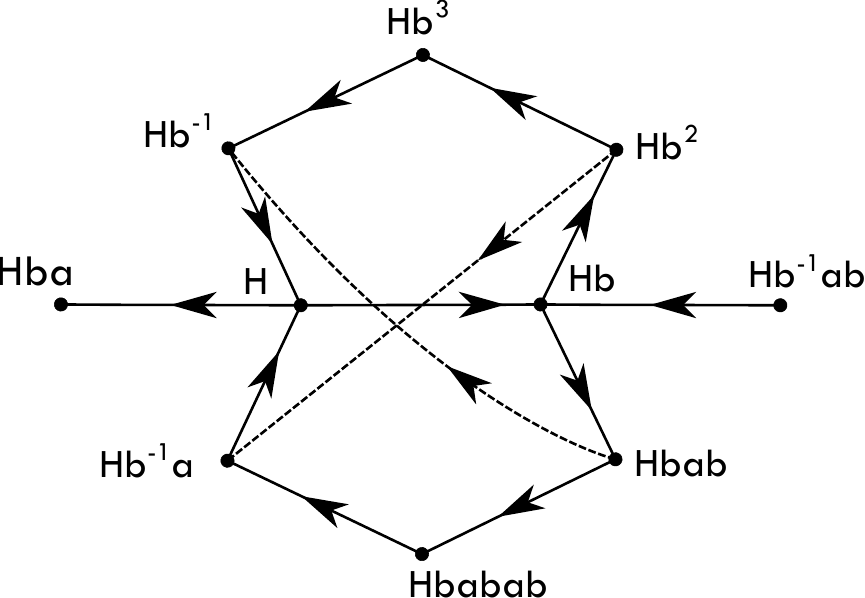}
    \caption{The two $5$-cycles through the edge $\{H, Hb\}$.}%
    \label{fig:PSL}%
\end{figure}

We finally prove that $\G$ is in fact arc-transitive. To this end take the matrix
$$
	C = \begin{pmatrix} 1 & -1-2\xi-2\xi^2 \\ -1-2\xi-2\xi^2 & -1 \end{pmatrix}.
$$
Calculating we find that $(1+2\xi+2\xi^2)^2 = -3+4\xi^2+4\xi^3$, and so $C$ is invertible if and only if $-2+4\xi^2+4\xi^3 \neq 0$, which is easily seen to be the case. We can thus view $C$ as an element of $\mathrm{PGL}(2,p)$. Writing $c$ instead of $C$ it is then straightforward to verify that $c$, $ac$ and $bc$ are all involutions. This shows that $c$ commutes with $a$ and inverts $b$, and so we can let $c$ act by conjugation on the vertex set of $\G$. Since $c$ preserves $H$ and $\{b,b^{-1}\}$ this is clearly an automorphism of $\G$ and as it fixes the vertex $H$ and interchanges the neighbors $Hb$ and $Hb^{-1}$, the graph $\G$ is indeed arc-transitive, as claimed.
\end{proof}

In view of the discussion from the paragraph following the proof of Proposition~\ref{cay} and the fact that by the well-known Dirichlet's prime number theorem (see for instance~\cite[p. 167]{Lang}) there are infinitely many primes $p$ with $p \equiv 1 \pmod{10}$, we thus finally obtain the following.

\begin{theorem}
\label{the:oneortwo}
There exist infinitely many tetravalent graphs of girth $5$ admitting a half-arc-transitive group action such that each edge of the graph lies on a unique $5$-cycle, and there also exist infinitely many tetravalent graphs of girth $5$ admitting a half-arc-transitive group action such that each edge of the graph lies on precisely two $5$-cycles.
\end{theorem}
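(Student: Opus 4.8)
The plan is to establish the two infinite families separately, each by exhibiting an explicit infinite supply of examples guaranteed by results already proved in the excerpt. For the first assertion---infinitely many tetravalent girth-$5$ graphs admitting a half-arc-transitive action in which every edge lies on a unique $5$-cycle---I would invoke the tightly-attached graphs $\mathcal{X}_o(5,\, r;\, q)$ analyzed in Section~\ref{sec:TA}. By Theorem~\ref{TA}, whenever $r$ is odd and $q \in \ZZ_r$ satisfies $q \neq \pm 1$ and $1 + q + q^2 + q^3 + q^4 = 0$, the graph $\mathcal{X}_o(5,\, r;\, q)$ is a tetravalent half-arc-transitive graph of girth $5$ with directed $5$-cycles. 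As noted in the discussion following Proposition~\ref{cay}, these graphs are Cayley graphs with $a$ and $b$ of order $5$, so Proposition~\ref{cay} guarantees that every edge lies on a unique $5$-cycle. To obtain infinitely many non-isomorphic examples, I would observe that the defining condition $1 + q + q^2 + q^3 + q^4 \equiv 0 \pmod{r}$ is solvable precisely when $r$ has a fifth root of unity distinct from $1$, which holds for every prime $r \equiv 1 \pmod 5$; since there are infinitely many such primes (Dirichlet), and the orders $5r$ of these graphs grow without bound, we get infinitely many distinct examples.

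For the second assertion---infinitely many examples in which every edge lies on precisely two $5$-cycles---I would appeal directly to Proposition~\ref{pro:PSL}. For each prime $p$ with $p \equiv 1 \pmod{10}$ that result produces a connected tetravalent arc-transitive graph of girth $5$ and order $p(p^2-1)/4$ admitting $\PSL(2,p)$ as a half-arc-transitive subgroup of automorphisms, with each edge lying on exactly two $5$-cycles. The only remaining ingredient is the existence of infinitely many such primes, which is again Dirichlet's theorem on primes in arithmetic progressions (cited as~\cite[p.~167]{Lang}): the residue class $1 \pmod{10}$ is coprime to $10$, so it contains infinitely many primes. Because the orders $p(p^2-1)/4$ are strictly increasing in $p$, these graphs are pairwise non-isomorphic, yielding the desired infinite family.

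The bulk of the heavy lifting has already been done in Proposition~\ref{pro:PSL} and Proposition~\ref{cay}, so the proof of the theorem itself reduces to assembling these pieces together with two invocations of Dirichlet's theorem and two order-growth arguments to confirm infinitude. The main (and essentially the only) subtlety to address carefully is the solvability of $1 + q + q^2 + q^3 + q^4 = 0$ in $\ZZ_r$ for the first family: one must confirm that primitive fifth roots of unity exist in $\ZZ_r$ for a suitable infinite set of $r$, and that the resulting $q$ genuinely avoids $q = \pm 1$. I expect this to be routine, since a primitive fifth root of unity is automatically different from $1$ and, as $5$ is odd, different from $-1$ as well. No genuinely new technical obstacle arises; the theorem is a clean corollary of the machinery developed in this section, and the write-up need only make the two growth-of-order observations explicit to rule out finiteness.
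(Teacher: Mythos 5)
Your proposal is correct and follows essentially the same route as the paper: the first family is the tightly-attached graphs $\mathcal{X}_o(5,r;q)$ combined with Proposition~\ref{cay} (via their Cayley-graph structure), and the second is Proposition~\ref{pro:PSL} together with Dirichlet's theorem for primes $p\equiv 1\pmod{10}$. Your explicit remarks on the solvability of $1+q+q^2+q^3+q^4=0$ for primes $r\equiv 1\pmod 5$ and on the growth of the orders only make precise what the paper leaves implicit.
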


The above result shows that there is an abundance of examples of tetravalent graphs of girth $5$ admitting a half-arc-transitive group action. It thus makes sense to inspect the census of all tetravalent graphs admitting a half-arc-transitive group of automorphisms up to order $1000$~\cite{censusHAT} to see how many such graphs exist and how much variety one can get while observing some of their other interesting properties.

It turns out that, besides the Doyle-Holt graph, there are $39$ tetravalent half-arc-transitive graphs of girth $5$ up to order $1000$ (of which $26$ are tightly-attached) and all of them have just one $5$-cycle through each edge. Moreover, up to order $1000$ there are $68$ tetravalent arc-transitive graphs of girth $5$ admitting a half-arc-transitive subgroup of automorphisms. Of these $15$ have two $5$-cycles through each edge, while $52$ have just one $5$-cycle through each edge (the Rose window graph $\mathrm{R}_{12}(5,2)$ has five $5$-cycles through each edge).

In Table~\ref{tab:HAT} we list some of the tetravalent half-arc-transitive graphs of girth $5$ up to order $1000$ (we use the names from~\cite{censusHAT}) and for each of them indicate whether or not it is a Cayley graph (Cay), has a solvable automorphism group (solv) and also give its radius (rad), attachment number (att), lengths of consistent cycles (con\_cyc), as well as an indication of whether or not a corresponding oriented graph is alter-complete (alt-cmp). Given that for the tightly-attached examples most of these are clear, we only include two tightly-attached examples in this table.
\begin{table}[ht]
\begin{center}
\begin{tabular}{|c||c|c||c|c|c|c|}
\hline
name              & Cay & solv & rad & att & con\_cyc & alt-cmp \\
\hline
\hline
HAT[55,1]	& yes & yes & 11 & 11 & 5, 10 & no \\
 \hline
HAT[480,44]	& no & no & 5 & 1 & 5, 6 & yes \\
 \hline
HAT[605,3]	& yes & yes & 11 & 1 & 5, 10 & no \\
 \hline
HAT[605,7]	& yes & yes & 121 & 121 & 5, 10 & no \\
\hline
HAT[880,1]	& yes & yes & 22 & 11 & 5, 10 & no \\
 \hline
HAT[960,154]	& yes & no & 6 & 1 & 5, 8 & yes \\
\hline
HAT[960,162]	& no & no & 10 & 2 & 5, 6 & yes \\
 \hline
\end{tabular}
\caption{Examples of tetravalent half-arc-transitive graphs of girth $5$ and their properties.\label{tab:HAT}}
\end{center}
\end{table}

In Table~\ref{tab:nHAT1} we list various examples of tetravalent arc-transitive graphs of girth $5$ with one $5$-cycle through each edge and which admit a half-arc-transitive subgroup of automorphisms. Again we give the information on whether the graph is Cayley or not and whether its automorphism group is solvable or not. This time there can be different half-arc-transitive subgroups of automorphisms (by Corollary~\ref{stab4} they all have vertex stabilizers of order $2$) giving different orientations of the edges, and so for each half-arc-transitive group of automorphisms we give the various information for a corresponding orientation of the edges, just as in Table~\ref{tab:HAT}. 
%
\begin{table}[ht]
\begin{center}
\begin{tabular}{|c||c|c||c|c|c|c|}
\hline
name              & Cay & solv & rad & att & con\_cyc & alt-cmp \\
\hline
\hline
GHAT[60,10]	& yes & no & 3 & 2 & 5, 10 & yes \\
  \hdashline
			&  &  & 5 & 2 & 5, 6 & yes \\
 \hline
GHAT[80,12] & no & yes & 4 & 2 &  5, 10 & no \\
 \hline
GHAT[125,2] & yes & yes & 5 & 1 & 5, 10 & no \\
\hline
GHAT[150,8] & no & no & 5 & 1 & 5, 15  & no\\
\hline
GHAT[160,22]& yes & yes & 4 & 2 & 5, 10 & no \\
 \hdashline
			&  &  & 5 & 2 & 5, 8 & yes \\
\hline
GHAT[300,20]& yes & no & 3 & 1 & 5, 20 & no \\
 \hdashline
			&  &  & 10 & 5 & 5, 6 & yes \\
\hline
GHAT[540,48] & no & no & 5 & 1 & 5, 15  & yes\\
\hline
GHAT[800,83]& yes & yes & 10 & 2 & 5, 40 & no \\
 \hdashline
			&  &  & 20 & 10 & 5, 20 & no \\
\hline
\end{tabular}
\caption{Examples of tetravalent arc-transitive graphs of girth $5$ having one $5$-cycle through each edge and admitting a half-arc-transitive group action.\label{tab:nHAT1}}
\end{center}
\end{table}

We finally focus on examples having two $5$-cycles through each edge. In this case the fact that there are precisely two $5$-cycles through each edge and that in a corresponding orientation given by a half-arc-transitive subgroup of automorphisms all $5$-cycles are directed implies that prescribing the orientation of a particular edge uniquely determines the orientation of all the edges incident to it, and then inductively of each edge of the graph. Therefore, there can be at most one pair of orientations given by a half-arc-transitive action, and so if there exist at least two half-arc-transitive subgroups of automorphisms then there are precisely two of them and one is a subgroup of the other. In this case Corollary~\ref{stab4} and Proposition~\ref{consistent} imply that the smaller group has vertex stabilizers of order $2$ and only has consistent $5$-cycles while the larger one has vertex stabilizers of order $4$ and has one orbit of consistent $5$-cycles and one orbit of consistent $d$-cycles for some $d > 5$. 

In Table~\ref{tab:nHAT2} we list some of the $15$ tetravalent graphs of order at most $1000$ and girth $5$ with two $5$-cycles through each edge which admit a half-arc-transitive group action. We again include similar information as in the above two tables where this time we also include the information on the order of the vertex stabilizers (stab) of a corresponding half-arc-transitive subgroup. Here we make the agreement that if there are two half-arc-transitive subgroups of automorphisms we write $2,4$ in the `stab' column and $5, d$ in the `con\_cyc' column (see the above comment). Since by Proposition~\ref{cay} none of these graphs is a Cayley graph we omit this information from the table.
\begin{table}[ht]
\begin{center}
\begin{tabular}{|c||c|c||c|c|c|c|}
\hline
name              & stab & solv & rad & att & con\_cyc & alt-cmp \\
\hline
\hline
GHAT[30,4]	& $2, 4$ & no & $3$ & $2$ & $5, 6$ & yes \\
 \hline
GHAT[80,10] & $2, 4$ & yes & $4$ & $2$ & $5, 20$ & no \\
 \hline
GHAT[150,9] & $2, 4$ & no & $3$ & $1$ & $5, 30$  & no\\
\hline
GHAT[165,4] & $4$ & no & $3$ & $1$ & $5, 11$ & yes \\
\hline
GHAT[480,114] & $2$ & no & $6$ & $1$ & $5$ & yes \\
\hline
GHAT[640,1] 	& $2, 4$ & yes & $4$ & $1$ & $5, 10$ & no \\
\hline 
GHAT[825,8] & $4$ & no & $3$ & $1$ & $5, 55$ & no \\
\hline
\end{tabular}
\caption{Examples of tetravalent arc-transitive graphs of girth $5$ having two $5$-cycles through each edge and admitting a half-arc-transitive group action.\label{tab:nHAT2}}
\end{center}
\end{table}

The above data suggests that there is a great variety in examples of tetravalent graphs of girth $5$ admitting a half-arc-transitive group action. It also seems to suggest that the examples with two $5$-cycles through each edge are not as common as those with a unique $5$-cycle through each edge. But the one thing that stands out the most is that (at least up to order $1000$) none of the examples with two $5$-cycles through each edge is half-arc-transitive. It thus makes sense to ask if this is the case in general. Now, suppose $\G$ is a tetravalent graph of girth $5$ with two $5$-cycles through each edge and suppose $\G$ admits a half-arc-transitive subgroup $G \leq \Aut(\G)$. Let $v$ be a vertex of $\G$, let $u$ be one of its neighbors and choose the $G$-induced orientation $\vG$ such that $v \to u$. By Proposition~\ref{consistent} the $5$-cycles are $G$-consistent, and so there must exist elements $g,g' \in G$, each of which is of order $5$ and maps $v$ to $u$ (these are the $G$-shunts of the two $G$-consistent $5$-cycles through the edge $vu$). It is easy to see that $z = g'g^{-1}$ is an involution fixing $v$ but interchanging the two out-neighbors of $v$ as well as the two in-neighbors of $v$ in $\vG$. Therefore, $H = \langle g, z \rangle$ is a half-arc-transitive subgroup of $G$. If $H \neq G$, then $G$ has vertex stabilizers of order $4$ and $H$ is an index $2$ half-arc-transitive subgroup of $G$. Moreover, if $H = G$ and $G$ has vertex stabilizers of order $4$ then $G_v = \langle z, z' \rangle$ for some involution $z'$, $z' \neq z$, commuting with $z$. In any case, the group $H$ acts half-arc-transitively on $\G$, is a quotient group of the finitely presented group 
$$
	\tilde{G} = \langle \tilde{z}, \tilde{g} \mid \tilde{z}^2, \tilde{g}^5, (\tilde{z}\tilde{g})^5 \rangle
$$
and a vertex stabilizer in $H$ is either generated by the element corresponding to $\tilde{z}$ or by the element corresponding to $\tilde{z}$ and some involution commuting with it. 

One can thus use the {\tt LowIndexNormalSubgroups} method (see~\cite{ConDob05}) in {\sc Magma}~\cite{magma} to construct all candidates for such graphs up to some reasonably large order. It turns out that up to order $5\,000$ there are $43$ tetravalent graphs of girth $5$ with two $5$-cycles through each edge admitting a half-arc-transitive group action (recall that there are $15$ of them up to order $1000$). Of these there are $6$ examples admitting only one half-arc-transitive group of automorphisms where this group has vertex stabilizers of order $2$ (like the graph GHAT[480,114]), there are $8$ examples admitting only one half-arc-transitive group of automorphisms where this group has vertex stabilizers of order $4$ (like the graph GHAT[165,4]), while the remaining $29$ examples admit two half-arc-transitive groups, a larger one with vertex stabilizers of order $4$ and the smaller one being its index $2$ subgroup. However, all of these $43$ examples are in fact arc-transitive. 

As much as the above data supports a conjecture that there are no tetravalent half-arc-transitive graphs of girth $5$ with two $5$-cycles through each edge such a conjecture would be false. Namely, it turns out that there exists a tetravalent half-arc-transitive graph of order $6480$ and girth $5$ with two $5$-cycles through each edge and that it is the unique smallest example with this property. Its automorphism group is a solvable group of the form 
$$
	(\ZZ_3^4 \rtimes ((\ZZ_2 \times Q_8) \rtimes \ZZ_2)) \rtimes \ZZ_5
$$
and has vertex stabilizers of order $2$. Given that the graph can relatively easily be obtained via the above mentioned {\tt LowIndexNormalSubgroups} method while giving the group as a finitely presented group would require a rather long list of relations, we do not spell out the automorphism group of the graph explicitly. We do mention that the radius and attachment number of the graph are $4$ and $1$, respectively, and that the graph is not alter-complete. 

The above data suggests many interesting questions to be considered in possible future studies. One of them is for sure the following one (note that the potential graphs giving an affirmative answer to the question necessarily have two $5$-cycles through each edge).

\begin{question}
Does there exist a tetravalent half-arc-transitive graph of girth $5$ whose automorphism group has vertex stabilizers of order $4$? If so, what is the smallest example? 
\end{question}

To mention just one more interesting question observe that inspecting Table~\ref{tab:nHAT1} it appears that if a tetravalent graph $\G$ of girth $5$ with a unique $5$-cycle through each edge admits more than one half-arc-transitive subgroup of automorphisms then it admits precisely two. It turns out that up to order $1000$ for each tetravalent graph of girth $5$ with a unique $5$-cycle through each edge the following holds: if this graph admits more than one half-arc-transitive subgroup of automorphisms then the vertex stabilizers in the full automorphism group are of order $4$. Of course, if this is indeed the case then any half-arc-transitive subgroup is of index $2$ and is thus normal, in which case one easily shows that there can be at most two half-arc-transitive subgroups. What is more, one can show that if $H$ and $G$ are two half-arc-transitive subgroups (each of which is of index $2$ in the full automorphism group) then the $H$-consistent cycles which are not $5$-cycles are the $G$-alternating cycles while the $G$-consistent cycles which are not $5$-cycles are the $H$-alternating cycles (which is in accordance with the data in Table~\ref{tab:nHAT1}). However, it is not obvious why the stabilizers in the full automorphism group should be of order $4$ if we have at least two half-arc-transitive subgroups. To support this we mention that, although the example GHAT[125,2] from Table~\ref{tab:nHAT1} does admit just one half-arc-transitive subgroup of automorphisms, its full automorphism group has vertex stabilizers of order $8$, despite the fact that there is just one $5$-cycle through each edge of the graph. We thus pose the following question.

\begin{question}
Does there exist a tetravalent graph of girth $5$ admitting more than two half-arc-transitive subgroups of its automorphism group?
\end{question}
\end{section}


\begin{section}{An application: weak metacirculants of girth $5$}

We conclude the paper by showing how the results from the previous sections can be used to easily obtain a classification of tetravalent half-arc-transitive weak metacirculants of girth $5$. It was shown in~\cite{C2C4} that each non-tightly-attached tetravalent half-arc-transitive weak metacirculant is of Class~IV. In view of Theorem~\ref{TA} we thus only need to consider the weak metacirculants of Class~IV. It was shown in~\cite[Theorem~3.1]{ClassIV} that the tetravalent half-arc-transitive weak metacirculants of Class~IV all belong to the $8$-parametric family of graphs $\mathcal{X}_{IV}(m,n;r,t;p,a;q,b)$, the definition of which we review for self completeness (but see~\cite{ClassIV} for details). 

The graph $\mathcal{X}_{IV}(m,n;r,t;p,a;q,b)$, where $m \geq 5$, $n \geq 3$, $1 \leq p < q < m/2$ and $r,t,a,b \in \ZZ_n$ are such that 
$$
\gcd(p,q,m) = 1,\ \gcd(a,b,t,n) = 1,\ \gcd(t,n) \neq 1,\ r^m = 1\ \text{and}\ t(r-1)=0,
$$
has vertex set $\{u_i^j \colon i \in \ZZ_m,\ j \in \ZZ_n\}$ and adjacencies of the form
$$
	 u_i^j\>\> \sim \>\>
\begin{cases}
  u_{i+p}^{j+a r^i}, \,  u_{i+q}^{j+b r^i}  & ; \quad \text{$0\leq  i<m-q$, $j\in\mathbb{Z}_n$ }\\
   u_{i+p}^{j+a r^i}, \, u_{i+q}^{j+b r^i +t}  & ; \quad \text{$m-q\leq  i<m-p$, $j\in\mathbb{Z}_n$  }\\
   u_{i+p}^{j+a r^i+t}, \,   u_{i+q}^{j+b r^i +t}  & ; \quad \text{$m-p\leq i<m$, $j\in\mathbb{Z}_n.$  }\\
\end{cases}		
$$
The edges connecting vertices with subscripts of the form $i$ and $i+p$ (addition performed modulo $m$) are called the {\em $p$-edges} and those connecting vertices with subscripts of the form $i$ and $i+q$ are {\em $q$-edges}. It follows from~\cite[Theorem~3.1]{ClassIV} that each tetravalent half-arc-transitive $\mathcal{X}_{IV}(m,n;r,t;p,a;q,b)$ graph admits a regular subgroup of automorphisms (generated by the automorphisms $\rho$ and $\sigma$ from~\cite{ClassIV}) whose two orbits on the set of the edges of the graph are the set of all $p$-edges and the set of all $q$-edges. Using our results we now show that such a graph cannot be of girth $5$.

\begin{proposition}
There exist no tetravalent half-arc-transitive weak metacirculants of Class~IV and girth $5$.
\end{proposition}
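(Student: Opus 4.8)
The plan is to exploit the fact, recorded just before the statement, that every half-arc-transitive $\mathcal{X}_{IV}(m,n;r,t;p,a;q,b)$ carries a regular group of automorphisms whose two edge-orbits are precisely the set of $p$-edges and the set of $q$-edges. This realizes $\Gamma=\mathcal{X}_{IV}(\ldots)$ as a Cayley graph $\mathrm{Cay}(H,\{\alpha,\alpha^{-1},\beta,\beta^{-1}\})$ in which the $\alpha$-edges are the $p$-edges and the $\beta$-edges are the $q$-edges. If $\Gamma$ had girth $5$, Proposition~\ref{cay} would apply and leave only two possibilities: either $\Gamma$ is one of $\mathcal{X}_o(3,9;4)$ and $\mathrm{R}_{12}(5,2)$, or both $\alpha$ and $\beta$ have order $5$ and every edge lies on a unique $5$-cycle.

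The two exceptional graphs I would dispose of first. Both have undirected $5$-cycles (Theorem~\ref{neusmerjeni}), and since $\mathcal{X}_{IV}$ requires $m\ge 5$, $n\ge 3$, their orders $27$ and $24$ force $(m,n)=(9,3)$, respectively $(m,n)\in\{(6,4),(8,3)\}$; a direct inspection of their (known) structure then shows that neither is a Class~IV weak metacirculant, so both may be excluded.

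In the remaining case I would use that $\alpha$ has order $5$. Following $p$-edges advances the subscript by $p$ at each step, so a pure $p$-cycle has length divisible by $m/\gcd(m,p)$; as this number equals the order $5$ of $\alpha$, is prime, and cannot be $1$ (since $0<p<m$), we get $m/\gcd(m,p)=5$, and similarly $m/\gcd(m,q)=5$. Hence $m/5=\gcd(m,p)=\gcd(m,q)$ divides $\gcd(p,q,m)=1$, so $m=5$ and, because $1\le p<q<5/2$, necessarily $p=1$ and $q=2$. Writing out the closure of the directed pure $p$- and pure $q$-cycles through $u_0^0$ then yields the scalar relations $a\Sigma+t=0$ and $b\Sigma+2t=0$ in $\ZZ_n$, where $\Sigma=1+r+r^2+r^3+r^4$; note that $(r-1)\Sigma=r^5-1=0$. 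Subtracting gives $(b-2a)\Sigma=0$, and eliminating $t$ leaves $t=-a\Sigma$ together with $(b-a-ar)\Sigma=0$.

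The hard part will be the final elimination, and I expect it to be the technical heart of the argument. Setting $X:=b-a-ar$, the girth hypothesis forces $X\neq 0$, $(r-1)X\neq 0$, $(1+r^2)X\neq 0$, and so on, each such nonvanishing being the failure of the closure condition of a candidate $4$-cycle, while the $5$-cycle relations give $\Sigma X=0$. Working prime by prime in $\ZZ_n$ via the Chinese Remainder Theorem, at each prime power dividing $n$ at which $r\equiv 1$ the element $\Sigma$ is either a unit or divisible by exactly the same power of that prime as $r-1$, so $\Sigma X=0$ forces $(r-1)X\equiv 0$ and hence a $4$-cycle there; the only surviving primes are those at which $r$ is a primitive fifth root of unity, where $\Sigma\equiv 0$ and thus $t\equiv 0$. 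Consequently the genuine obstacle is not girth alone but the half-arc-transitivity: the parameter conditions of~\cite[Theorem~3.1]{ClassIV} guaranteeing that $\mathcal{X}_{IV}$ is edge-transitive (equivalently, that some automorphism interchanges the $p$-edges with the $q$-edges) must be fed into $a\Sigma+t=0$ and $b\Sigma+2t=0$ to force $\gcd(t,n)=1$, in contradiction with the Class~IV assumption $\gcd(t,n)\neq 1$. Making this last clash precise and clean is what the proof must accomplish.
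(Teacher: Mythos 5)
Your reduction is correct and essentially the paper's up to the point where you obtain $m=5$, $p=1$, $q=2$ and the closure relations $a\Sigma+t=0$ and $b\Sigma+2t=0$ with $\Sigma=1+r+r^2+r^3+r^4$ (your disposal of the two exceptional graphs, while vaguer than the paper's observation that $\mathrm{R}_{12}(5,2)$ is arc-transitive and $\mathcal{X}_o(3,9;4)$ is not of Class~IV, is repairable). The genuine gap is the endgame. You hope to force $\gcd(t,n)=1$ and thereby contradict the Class~IV condition $\gcd(t,n)\neq 1$, but your own computation rules this out: at every prime divisor of $n$ that survives your girth analysis you get $\Sigma\equiv 0$ and hence $t\equiv 0$, so $\gcd(t,n)$ is divisible by every prime dividing $n$ and is never $1$. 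Concretely, after the normalization $a=0$, $b=1$ (legitimate because $p=1$) the relations collapse to $t=0$ and $1+r+r^2+r^3+r^4=0$, and $\mathcal{X}_{IV}(5,n;r,0;1,0;2,1)$ with these constraints satisfies all the defining parameter conditions of Class~IV. No contradiction can be extracted from the parameters together with the girth hypothesis alone.

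What is missing is a genuine use of half-arc-transitivity beyond Proposition~\ref{cay}. By Corollary~\ref{stab4} and Proposition~\ref{cay} the vertex stabilizer $\Aut(\G)_{u_0^0}$ has order $2$, and since the regular metacyclic subgroup has index $2$ and its two edge orbits are precisely the $p$-edges and the $q$-edges, the unique nontrivial element $\tau$ of this stabilizer must interchange the set of all $p$-edges with the set of all $q$-edges. The paper exploits this by following the $\tau$-image of an explicit closed walk of length $15$ through $u_0^0$ in which every third edge is a $q$-edge; the relation $1+r+r^2+r^3+r^4=0$ then forces $\tau(u_0^5)=u_0^0=\tau(u_0^0)$, hence $n=5$, which is incompatible with $r^5=1$ and $r\neq 1$ in $\ZZ_5$. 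Some argument of this kind, pinning down the action of the edge-swapping stabilizer element, is indispensable; without it your proof stops at a consistent set of conditions rather than at a contradiction.
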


\begin{proof}
Let us assume that $\G = \mathcal{X}_{IV}(m,n;r,t;p,a;q,b)$ is a tetravalent half-arc-transitive weak metacirculant of Class~IV and girth $5$. Since $\mathrm{R}_{12}(5,2)$ is arc-transitive while $\mathcal{X}_o(3,9;4)$ is not of Class~IV (as can easily be verified, but see for instance~\cite{C2C4}), Theorem~\ref{neusmerjeni} implies that the $5$-cycles of $\G$ are all directed. As explained in the comments preceding this lemma the results of~\cite{ClassIV} show that $\G$ is a Cayley graph, and so Corollary~\ref{stab4} and Proposition~\ref{cay} imply that each edge of $\G$ lies on a unique $5$-cycle, that the vertex stabilizers in $\Aut(\G)$ are of order $2$,  and that one of the two $5$-cycles through the vertex $u_0^0$ consists of five $p$-edges while the other consists of five $q$-edges. Moreover, for each vertex $v$ of $\G$ the unique nontrivial element of the stabilizer $\Aut(\G)_v$ interchanges the set of all $p$-edges with the set of all $q$-edges (as the regular group mentioned in the paragraph preceding this lemma is of index $2$ in $\Aut(\G)$). 

As $1 \leq p < q < m/2$, this implies that $5p = m$ and $5q = 2m$, forcing $q = 2p$. The condition $\gcd(p,q,m) = 1$ thus forces $p = 1$, and consequently $m=5$ and $q=2$. It is not difficult to see (but see~\cite{ClassIV}) that because of $p = 1$ we can in fact assume that $a = 0$, and so to obtain a $5$-cycle through $u_0^0$ consisting of five $p$-edges we require that $t = 0$. Then $\gcd(a,b,t,n) = 1$ implies $\gcd(b,n) = 1$, and one can again easily show that we can in fact assume that $b = 1$. Therefore, $\G = \mathcal{X}_{IV}(5,n;r,0;1,0;2,1)$ with $r^5 = 1$, $r \neq 1$ (otherwise $\G$ would have $4$-cycles). To obtain the $5$-cycle through $u_0^0$ consisting of five $q$-edges we thus require $1+r+r^2+r^3+r^4 = 0$. 

Let $\tau$ be the unique nontrivial automorphism fixing $u_0^0$ and recall that it swaps $p$-edges with $q$-edges. Consider the $\tau$-image of the walk $(u_0^0,u_2^1,u_1^1,u_0^1,u_2^2,u_1^2,u_0^2,u_2^3,\ldots , u_0^5)$ of length $15$ (on which every third edge is a $q$-edge and the remaining ones are $p$-edges). Fix the $\Aut(\G)$-induced orientation of the edges of $\G$ in which $u_0^0 \to u_1^0$. If $u_0^0 \to u_2^1$, then one readily verifies that $\tau$ maps $u_0^1$ to $u_2^{-r^2-r^4}$, $u_0^2$ to $u_4^{-r-r^2-2r^4} = u_4^{1+r^3-r^4}$, $u_0^3$ to $u_1^{1-r-r^4}$, $u_0^4$ to $u_3^{-r-r^3-r^4}$ and finally $u_0^5$ to $u_0^{-1-r-r^2-r^3-r^4} = u_0^0$. Since $u_0^0$ is left fixed by $\tau$ this implies $n = 5$. But this contradicts the fact that $r^5 = 1$ and $r \neq 1$. In an analogous way one can verify that if $u_2^1 \to u_0^0$ we again have that $\tau$ maps $u_0^5$ to $u_0^0$, which we already know cannot be the case. 
\end{proof}

Together with Theorem~\ref{TA} this yields the following result.

\begin{theorem}\label{the:meta}
Let $\G$ be a tetravalent half-arc-transitive weak metacirculant of girth $5$. Then $\G$ is tightly-attached and is either isomorphic to the Doyle-Holt graph $\mathcal{X}_o(3,9;4)$ or is isomorphic to $\mathcal{X}_o(5, r; q)$ for an odd integer $r$ and some $q \in \ZZ_r^*$ with $q \neq \pm 1$ and $1+q+q^2+q^3+q^4 = 0$. 
\end{theorem}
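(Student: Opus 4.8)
The plan is to assemble the conclusion from two ingredients already in hand: the structural dichotomy for weak metacirculants recorded at the start of this section and the preceding proposition. Recall from~\cite{C2C4} that every tetravalent half-arc-transitive weak metacirculant belongs to at least one of Class~I (the tightly-attached graphs) and Class~IV. So first I would apply this dichotomy to our $\G$: if $\G$ did not belong to Class~I, it would have to be of Class~IV, but the preceding proposition rules out Class~IV graphs of girth~$5$. Hence $\G$ is of Class~I, that is, $\G$ is tightly-attached.

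Having reduced to the tightly-attached case, I would invoke Theorem~\ref{TA}. Since $\G$ is half-arc-transitive, the full group $G = \Aut(\G)$ acts half-arc-transitively on $\G$, and by the previous paragraph $\G$ is $\Aut(\G)$-tightly-attached; thus $\G$ is a tightly-attached $G$-half-arc-transitive graph of girth~$5$ for this $G$. Theorem~\ref{TA} then forces $\G \cong \mathcal{X}_o(3,9;4)$ or $\G \cong \mathcal{X}_o(5,r;q)$ with $r$ odd, $q \neq \pm 1$ and $1+q+q^2+q^3+q^4 = 0$, which is precisely the asserted conclusion; note that $1+q+q^2+q^3+q^4 = 0$ already forces $q^5 = 1$ with $q \neq 1$, so that $q$ is a unit of multiplicative order $5$ and the condition $q \in \ZZ_r^*$ is automatic.

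There is essentially no obstacle remaining, since the genuine work was carried out in the preceding proposition (excluding Class~IV) and in Theorem~\ref{TA} (classifying the tightly-attached examples). The only point needing a word of care is the matching of terminology: Theorem~\ref{TA} is stated for a graph that is $G$-tightly-attached and $G$-half-arc-transitive for \emph{some} subgroup $G \le \Aut(\G)$, whereas our hypothesis is the a priori stronger one that $\G$ is half-arc-transitive. Taking $G = \Aut(\G)$ reconciles these, and the ``moreover'' clause of Theorem~\ref{TA} guarantees that both listed families really are (full) half-arc-transitive graphs, so that the resulting characterization is an equivalence rather than merely a necessary condition.
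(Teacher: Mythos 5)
Your argument is correct and follows the same route as the paper: use the dichotomy from~\cite{C2C4} (every tetravalent half-arc-transitive weak metacirculant is of Class~I or Class~IV), eliminate Class~IV via the preceding proposition, and then apply Theorem~\ref{TA} with $G = \Aut(\G)$ to the resulting tightly-attached graph. Your closing remarks on matching the quantifier over $G$ and on $q$ being a unit of order $5$ are accurate and harmless additions.
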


 \end{section}

\markboth{}{}


\begin{thebibliography}{}

\bibitem{ClassIV} I.~Anton\v ci\v c, P.~\v Sparl, 
		Classification of quartic half-arc-transitive weak metacirculants of girth at most $4$, 
		{\em Discrete Math.} {\bf 339} (2016), 931--945.

\bibitem{CC} M.~Boben, \v S.~Miklavi\v c, P.~Poto\v cnik, 
		Consistent cycles in a half-arc-transitive graphs,
		{\em The Electronic J. of Combin.} {\bf 16} (2009), 1--10.

\bibitem{magma} W.~Bosma, J.~Cannon, C.~Playoust, 
		The Magma algebra system. I. The user language, 
		{\em J. Symbolic Comput.} {\bf 24} (1997), 235–-265.

\bibitem{ConDob05} M.~Conder, P.~Dobcs\'anyi, 
		Applications and adaptations of the low index subgroups procedure,
		{\em Math. Comp.} {\bf 74} (2005), 485--497.

\bibitem{CuiZho21} L.~Cui, J.-X.~Zhou,
		A classification of tetravalent half-arc-transitive metacirculants of 2-power orders,
		{\em Appl. Math. Comput.} {\bf 392} (2021), Paper No. 125755, 14 pp.

\bibitem{DF04book} D.~S.~Dummit, R.~M.~Foote,
		``Abstract algebra'',
		J.~Wiley and Sons, Inc., Hoboken, 2004.
		
\bibitem{GR01book} C.~Godsil and G.~Royle, 
		``Algebraic graph theory'',
		Springer-Verlag, New York, 2001.

\bibitem{dickson} B.~Huppert.
		Endliche gruppen I. Springer Verlag.
		Berlin, Heidelberg, New York, 1967.

\bibitem{KovKutMar10} I.~Kov\'acs, K.~Kutnar, D.~Maru\v si\v c,
		Classification of edge-transitive rose window graphs,
		{\em J. Graph Theory} {\bf 65} (2010), 216--231.
		
\bibitem{RoseWindow} I.~Kov\'acs, K.~Kutnar, J.~Ruff,
		Rose window graphs underlying rotary maps, 
		{\em Discrete Math.} {\bf 310} (2010), 1802--1811.

\bibitem{Lang} S.~Lang,
		Algebraic number theory, 
		Springer Verlag, New York, 1986.
		
\bibitem{MalMarSeiSpaZgr08} A.~Malni\v c, D.~Maru\v si\v c, N.~Seifter, P.~\v Sparl, B.~Zgrabli\'c, 
		Reachability relations in digraphs, 
		{\em European J. Combin.} {\bf 29} (2008), 1566--1581.
		
\bibitem{MalPotSeiSpa15} A.~Malni\v c, P.~Poto\v cnik, N.~Seifter, P.~\v Sparl, 
		Reachability relations, transitive digraphs and groups, 
		{\em Ars Math. Contemp.} {\bf 8} (2015), 83--94.
		
\bibitem{MTAO} D.~Maru\v si\v c, 
		Half-arc-transitive group actions on finite graphs of valency $4$, 
		{\em J. Combin. Theory, Ser. B} {\bf 73} (1998), 41--76.

\bibitem{HTG4} D.~Maru\v si\v c, R.~Nedela, 
		Finite graphs of valency $4$ and girth $4$ admitting half-arc-transitive group actions, 
		{\em J. Austral. Math. Soc.} {\bf 73} (2002), 155--170.

\bibitem{MarPot02} D.~Maru\v si\v c, P.~Poto\v cnik,
		Bridging semisymmetric and half-arc-transitive actions on graphs,
		{\em European J. Combin.}  {\bf 23} (2002), 719--732.
		
\bibitem{MarPra99} D.~Maru\v si\v c, C.~E.~Praeger, 
		Tetravalent graphs admitting half-transitive group actions: alternating cycles, 
		{\em J.~Combin.~Theory Ser.~B} {\bf 75} (1999), 188--205.

\bibitem{classes} D.~Maru\v si\v c, P.~\v Sparl, 
		On quartic half-arc-transitive metacirculants, 
		{\em J. Algebraic. Combin.} {\bf 28} (2008), 365--395.
		
\bibitem{MarXu97} D.~Maru\v si\v c, M.-Y.~Xu, 
		A $\frac{1}{2}$-transitive graph of valency $4$ with a nonsolvable group of automorphisms, 
		{\em J. Graph Theory} {\bf 25} (1997), 133--138.

\bibitem{censusHAT} P.~Poto\v cnik, P.~Spiga, G.~Verret,
		A census of 4-valent half-arc-transitive graphs and arc-transitive digraphs of valence two, 
		{\em Ars Math. Contemp.} {\bf 8} (2015), 133--148.
		
\bibitem{PotSpa17} P.~Poto\v cnik, P.~\v Sparl,
		On the radius and the attachment number of tetravalent half-arc-transitive graphs,
		{\em Discrete Math.} {\bf 340} (2017), 2967--2971.
		
\bibitem{PotWil07} P.~Poto\v cnik, S.~Wilson,
		Tetravalent edge-transitive graphs of girth at most 4,
		{\em J. Combin. Theory, Ser. B} {\bf 97} (2007), 217--236.

\bibitem{PotWil} P.~Poto\v cnik, S.~Wilson, 
		Recipes for edge-transitive tetravalent graphs, 
		{\em The Art of Discrete and Applied Math.} {\bf 3} (2020), 1--33.
		
\bibitem{PozPra21} N.~Poznanovi\'c, C.~E.~Praeger, 
		Four-valent oriented graphs of biquasiprimitive type,
		{\em Algebr. Comb.} {\bf 4} (2021), no. 3, 409--434.
		
\bibitem{RamSpa19} A.~Ramos Rivera, P.~\v Sparl,
		New structural results on tetravalent half-arc-transitive graphs,
		{\em J. Combin. Theory, Ser. B}, {\bf 135} (2019), 256--278.

\bibitem{TA4E} P.~\v Sparl, 
		A classification of tightly attached half-arc-transitive graphs of valency $4$, 
		{\em J. Combin. Theory, Ser. B} {\bf 98} (2008), 1076--1108.

\bibitem{OCM} P.~\v Sparl, 
		On the classification of quartic half-arc-transitive metacirculants, 
		{\em Discrete Math.} {\bf 309} (2009), 2271--2283.

\bibitem{C2C4} P.~\v Sparl, 
		Almost all quartic half-arc-transitive weak metacirculants of Class II are of Class IV, 
		{\em Discrete Math.} {\bf 310} (2010), 1737--1742.

\bibitem{Wil04} S.~Wilson,
		Semi-transitive graphs,
		{\em J. Graph Theory} {\bf 45} (2004), 1--27.
   
\bibitem{Wil08} S.~Wilson, 
		Rose window graphs,
		{\em Ars Math. Contemp.} {\bf 1} (2008), 7--19.
    
\end{thebibliography}
\end{document}